\newtheorem{theorem}{Theorem}
\newtheorem{proposition}[theorem]{Proposition}
\newtheorem{corollary}[theorem]{Corollary}
\newtheorem{lemma}[theorem]{Lemma}
\newtheorem{algorithm}{Algorithm}
\theoremstyle{remark}
\newtheorem{remark}[theorem]{Remark}
\newtheoremstyle{assumptionstyle}
{3pt}
{3pt}
{\itshape}
{}
{\itshape}
{:}
{.5em}
{}
\theoremstyle{assumptionstyle}
\newtheorem*{assumption}{Assumption}
\algnewcommand\AND{\textbf{ and }}
\def\({\left(}
\def\){\right)}
\def\l[{\left[}
\def\r]{\right]}
\newcommand{\embed}{\hookrightarrow}
\NewDocumentCommand{\hl}{+m}{#1%
}
\newcommand{\re}[1]{\Re\left(#1\right)}
\newcommand{\im}[1]{\Im\left({#1}\right)}
\newcommand{\diffq}[2]{\frac{\partial #1}{\partial #2}}
\newcommand{\base}[1]{\mathbf{e}_{#1}}
\newcommand{\Rco}{\mathbf{R}}
\newcommand{\Mmat}{\mathbf{M}}
\newcommand{\Lapmh}{\Delta_m}  
\newcommand{\Lapm}{\Delta_m}
\newcommand{\Laph}{\Delta_{\mathrm{h}}}
\newcommand{\vLaph}{\bm{\Delta}_{\mathrm{h}}}
\def\Z{\mathbb{Z}}
\def\R{\mathbb{R}}
\def\C{\mathbb{C}}
\def\D{\,\mathrm{d}}
\def\fh{\hat{f}}
\def\Psih{\widehat{\Psi}}
\def\fv{\textbf{\textit{f}}}
\def\fvh{\fv_{\!\mathrm{h}}}
\def\uv{\textbf{\textit{u}}}
\def\uvh{\uv_{\mathrm{h}}}
\def\rv{\textbf{\textit{r}}}
\def\Mc{{r{\mathbb{S}^2}}}	
\def\vv{\textbf{\textit{v}}}
\def\uvn{{\uv_0}}
\def\Nc{\mathcal{N}}
\def\Xtil{\widetilde{X}}
\def\Omv{\mathbf{\Omega}}
\def\Orefv{\mathbf{\Omega}_{\mathrm{ref}}}
\def\Oref{\Omega_{\mathrm{{ref}}}}
\def\Odag{\Omega^\dagger}
\def\N{\mathbb{N}}
\def\R{\mathbb{R}}
\def\N{\mathbb{N}}
\def\Dc{\mathcal{D}}
\def\ball{B_R^{\R\times X}(\gamma^\dagger,\beta^\dagger)}
\def\balltil{B_R^{\R\times X}(\gamma^\dagger,\Omega^\dagger)}
\def\Bb{\mathbf{B}_m}
\newcommand{\Bbtwo}[2]{\Bb(#1)#2}  
\newcommand{\Bbone}[1]{\Bb(#1)}
\newcommand{\Bbp}[1]{\Bb'(#1)}
\newcommand{\Bbptwo}[2]{\Bb'(#1)#2}
\def\Bf{\mathbf{B}}
\newcommand{\Hd}[1]{H^{#1}_{\diamond}(\Mc)}
\newcommand{\Hdm}[1]{H^{#1}_{\diamond}(\Mc;m)}
\def\cc{\,,\,}
\def\dt{\partial_\theta}
\def\ran{\right\rangle}\def\lan{\left\langle}
\def\lsp{\left\langle}
\def\rsp{\right\rangle}
\newcommand{\shorten}[1]{}
\newcommand{\covder}[1]{\nabla_{\!#1}}
\DeclareMathOperator{\diver}{div}  
\DeclareMathOperator{\vdiver}{\mathbf{div}}  
\DeclareMathOperator{\grad}{\mathbf{grad}}  
\DeclareMathOperator{\curl}{curl}  
\DeclareMathOperator{\Div}{div_{\mathrm{h}}}    
\DeclareMathOperator{\Grad}{\mathbf{grad}_{\mathrm{h}}}  
\DeclareMathOperator{\Gradm}{\mathbf{grad}_{\mathit{m}\!}}
\DeclareMathOperator{\Curl}{curl_{\mathrm{h}}}    
\DeclareMathOperator{\vCurl}{\mathbf{curl}_{\mathrm{h}}}  
\title{\Large\bf
Linear toroidal-inertial waves on a differentially rotating sphere with application to helioseismology:\\ Modeling, forward and inverse problems}
\author[1]{\normalsize Tram Thi Ngoc Nguyen}
\author[1]{Damien Fournier}
\author[1,2]{Laurent Gizon}
\author[1,3]{Thorsten Hohage}
\affil[1]{\small Max Planck Institute for Solar System Research, G\"ottingen, Germany}
\affil[2]{Institute for Astrophysics and Geophysics, University of G\"ottingen, Germany}
\affil[3]{Institute for Numerical and Applied Mathematics, University of G\"ottingen, Germany}
\date{ }
\begin{document}

\pagenumbering{arabic}

\maketitle

\abstract{
This paper develops a mathematical framework for interpreting observations of solar inertial waves in an idealized setting. Under the assumption of purely toroidal linear waves on the sphere, the stream function of the flow satisfies a fourth-order scalar equation. We prove well-posedness of wave solutions under explicit conditions on differential rotation. Moreover, we study the inverse problem of simultaneously reconstructing viscosity and differential rotation parameters from either complete or partial surface data. We establish convergence guarantee of iterative regularization methods by verifying the tangential cone condition, and prove local unique identifiability of the unknown parameters. Numerical experiments with Nesterov-Landweber iteration confirm reconstruction robustness across different observation strategies and noise levels.}
\paragraph{Keywords.} Inverse problems, differential equations on manifolds, 
fourth-order elliptic differential equations, 
helioseismology, inertial waves, iterative regularization, tangential cone condition.

\paragraph{Acknowledgment.} Support from Deutsche Forschungsgemeinschaft (DFG, German Research Foundation) through SFB 1456/432680300 Mathematics of Experiment, Project C04 is gratefully acknowledged. LG and DF acknowledge funding from the ERC Synergy Grant WHOLESUN 810218. \hl{The authors greatly appreciate the feedback received from the reviewers, which has led to an improvement of the manuscript.}

\newcommand{\LG}[1]{\textcolor{red}{[#1]}}

\section{Introduction}

\emph{Helioseismology} is the study of the internal structure and dynamics of the Sun from observations of solar oscillations on the surface. Such inference is primarily done by analyzing the observed pressure waves (p-modes) with periods of order five minutes. Acoustic-mode helioseismology based on the interpretation of the mode frequencies has led to many achievements, including the determination of the Sun's differential rotation as a function of radius and unsigned latitude \cite{Thompson03}. Inversions of the two-point correlations of the wave field at the surface can also be used to infer 3D perturbations in sound speed and flows  in the solar interior \cite{GizonBirchReview, Bjoern23}.

The recent discovery of global Rossby modes on the Sun \cite{Loptien2018} has opened a new research topic: inertial-mode helioseismology. \emph{Solar inertial oscillations} are modes restored by the Coriolis force, with periods spanning several weeks, comparable to solar rotation period of $\sim27$ days at the equator.
These modes require very long observations from space-based and ground-based observatories to be studied with sufficient frequency resolution; 50 years of data are now available \cite{Gizon2024, Liang2025}. Since many of these modes have maximum kinetic energy deep in the convective envelope of the Sun \cite{GizonInertial21}, their study promises to reveal new insights into the physics and  dynamics of the solar interior.  Current efforts focus on the development of simplified physical models of these inertial modes to investigate their unique sensitivity to internal properties of the Sun, such as differential rotation at very high latitudes and the internal turbulent viscosity. 
Several highly idealized 2D models focusing on purely toroidal modes have been developed for both $\beta$-plane \cite{Gizon2020} and spherical geometries \cite{Damien22}.
The dynamics of these retrograde-propagating modes is governed by the functional form of the background latitudinal differential rotation, with slower rotation at high latitudes than at the equator. Particularly, 2D models have demonstrated the existence of viscous critical latitudes for most  inertial modes. 
 
In this article, by employing stream function formulation in a 2D spherical framework, we reduce the vectorial viscous-inertial wave equation to a fourth-order scalar equation of Orr-Sommerfeld type \cite{Gizon2020,Damien22} \hl{for $\Psi$}. Additionally, we derive the separated equations for each longitudinal-wavenumber $m$ with realistic and physically meaningful $m$-dependent boundary conditions at the poles. 
The separated equations serve as the underlying models for our inverse problem: the retrieval of differential rotation \hl{$\Omega$} and turbulent viscosity \hl{$\gamma$ via  the parameter-to-state map $S:(\gamma,\Omega)\mapsto \Psi$ and suitable observation operators $L$ detailed in Subsections \ref{sec:modeling-separated} and \ref{sec:data}, respectively}. Since solar inertial wave inversion is completely novel in helioseismology, this work establishes the first step towards extracting these quantities from observed surface horizontal velocities.

\paragraph{Contributions.} This article provides a comprehensive treatment -- 
modeling, forward problem and inverse problem -- of purely toroidal inertial modes  governed by a time harmonic fourth-order equation on a differentially rotating sphere.

After a derivation of the forward model used in \cite{Damien22}, we prove existence, uniqueness and stability of full and separated inertial wave equations. Well-posedness \hl{of the parameter-to-state map $S$} is guaranteed under explicit conditions that the latitudinal-dependent rotation $\Omega$ is small relative to the product of the frequency and the third power of the viscosity $\gamma$. 
These results enable the application of analytic Fredholm theory, providing a mathematical foundation for the model \cite{Damien22}, and rigorously validating  structure of discrete isolated inertial modes.

We develop a regularization framework for the inverse problem of simultaneously reconstructing viscosity and differential rotation.
The framework is built on characterization of continuous adjoint operators and convergence guarantees for iterative regularization methods -- a result we obtain for both full and, under certain condition, for restricted observations. Central to our convergence analysis is the establishment of the \emph{tangential cone condition} through a lifted regularity strategy, accommodating realistic $L^2$-data. Furthermore, we prove local unique identifiability:  $\Omega$ is uniquely identifiable when $\gamma$ is known, and vice versa under full measurement.

\paragraph{Structure.} The article is organized as follows. Section \ref{sec:equation-data} introduces the modeling framework for inertial oscillations and outlines different observation strategies for solar data. Section \ref{sec:manifold} establish well-posedness and regularity of the resulting wave solutions. 
Section \ref{sec:ip} addresses the inverse problem for viscosity and differential rotation, including sensitivity analysis and adjoint operator derivation. 
In Section \ref{sec:reg_converge}, we develop convergence guarantees for iterative reconstruction methods and prove local unique identifiability via the tangential cone condition. Section \ref{sec:numerics} demonstrate numerical reconstruction performance
across different measurement scenarios. Finally, Section \ref{sec:outlook} concludes our findings  and future  directions.

\section{Model and observation of inertial waves}\label{sec:equation-data}
We begin with deriving a model for dynamics of linear inertial waves on the Sun in a rotating frame, incorporating latitudinal differential rotation and eddy viscosity. The theoretical framework is developed in Sections \ref{sec:modeling}–\ref{sec:modeling-separated}, while observations of solar data are discussed in Section \ref{sec:data}.

\subsection{Reduced-order modeling of inertial waves}\label{sec:modeling} 
\paragraph*{Linearized Navier Stokes equations.} 
We begin with the equation of motion (momentum equation), in which the dynamics of a moving particle of density $\rho$  is subject to linear viscous stress $\mathbf{\boldsymbol\tau}$ with viscosity $\gamma$, acoustic pressure $p$, gravity $\mathbf{g}$, and external source $\mathbf{f}$:
\begin{subequations}\label{eqs-motion}
\begin{align}\label{eq-motion}
\begin{aligned}
&\rho\(\frac{\partial \vv}{\partial  t}+ \covder{\vv}\vv + 2\Orefv\times\vv\)
= \vdiver( \rho \gamma \mathbf{\boldsymbol\tau})
- \grad p 
+ \rho \mathbf{g} 
+ \rho\mathbf{f}, \\
& \mathbf{\boldsymbol\tau}:=\grad\vv+(\grad \vv)^\top.
\end{aligned}
\end{align}
Here, $\vv=\vv(\rv,t)\in\R^3$ is the vector velocity of a particle at position $\rv\in \R^3$ and time $t\in(0,T)$, while $\covder{\vv}:=\sum_{i=1}^3v_i\diffq{}{x_i}$ applies componentwise.
We work in the frame rotating at the constant angular frequency $\Oref$ around a fixed axis $\mathbf{\hat{a}}$ and set $\Orefv=\Oref \,\mathbf{\hat{a}}$. Equation \eqref{eq-motion} corresponds to the Navier-Stokes equation in a rotating frame. The fictitious force $(2\Orefv\times\vv)$ is the inertial Coriolis force resulting from the transformation between the fixed and the rotating frame. The other additional fictitious forces, Euler force $\rho\left(\frac{d\Orefv}{dt}\times\rv\right)$ and centrifugal force $\rho\Orefv\times(\Orefv\times\rv)$, are ignored as  $\Oref$ is uniform in time and assumed to be small. For solar applications, one usually chooses the Carrington frame as reference frame with the angular velocity $\Oref=14.7$~deg/day and the rotation axis $\base{z}=[0,0,1]^{\top}$. Equation~\eqref{eq-motion} is complemented by the continuity equation (conservation of mass):
\begin{equation}
\frac{\partial \rho}{\partial t}+\diver(\rho \vv)=0\,\label{eq-mass}.
\end{equation}
\end{subequations}

We linearize Eqs.~\eqref{eqs-motion} around a stationary background (unperturbed, equilibrium, mean) medium characterized by $\uvn$, $p_0$, $\rho_0$, and $\mathbf{g}_0$ such that
\begin{align*}
&\vv(\rv,t)=\uvn(\rv)+\uv(\rv,t), \quad \rho(\rv,t)=\rho_0(\rv)+\rho'(\rv,t),
\\ &p(\rv,t)=p_0(\rv)+p'(\rv,t),\quad 
\mathbf{g}(\rv,t)=\mathbf{g}_0(\rv)+\mathbf{g}'(\rv,t),
\end{align*}
where $\uv$ is the (perturbed) wave velocity, and 
$\uvn(\rv):= (\Omv(\rv)-\Orefv)\times\rv$ is assumed to be a differential rotation field 
with angular velocity $\Omv(\rv)=\Omega(r,\theta)\base{z}$ depending only 
on radius $r$ and colatitude $\theta$ in spherical coordinates $[r;\theta;\phi]\in [0,\infty)\times [0,\pi)\times [0,2\pi)$. This linearization is possible when no linearly unstable mode exists. In this regime, there is no growing (in time) solutions to Eq.~\eqref{eqs-motion}. Note that such solutions exists in nonlinear simulations with solar-like differential rotation \cite{Bekki2024}, however  this regime is not included in this study.

\paragraph*{Background medium.} We assume that the background medium solves the second-order quasilinear elliptic boundary 
value problem 
\begin{subequations}\label{eqs:order0}
\begin{align}
\label{eq:order0}
&-\vdiver \left(\gamma\rho_0\left(\grad\uvn + (\grad\uvn)^{\top}\right)\right)
+ \rho_0 \covder{\uvn} \uvn + 2\rho_0 \Orefv\times\uvn \nonumber\\
&\hspace{9cm}=-\grad p_0 +\rho_0 \mathbf{g}_0, \\
&\diver (\rho_0\uvn)=0. \label{eq:order0b}
\end{align}
\end{subequations}
The background gravity force derives from a potential $\mathbf{g}_0 = - \nabla \Phi_0$ that satisfies the Poisson equation $\Delta \Phi_0 = 4 \pi G \rho_0$, where $G$ is the gravitational constant. To construct such a background model for the Sun, it is usually assumed that the background flows are weak and the background medium is built from the hydrostatic equilibrium, that is $\grad p_0 = \rho_0 \mathbf{g}_0$. This assumption leads to background coefficients that depend only on depth and are given by standard solar models such as Model~S \cite{modelS}. Starting from a radial model for density, it is possible to solve Poisson equation to obtain the gravitational potential and deduce the background pressure.

\paragraph*{First-order equations.} The first-order the system \eqref{eqs-motion} then becomes
\begin{subequations} \label{eqs:first-order}
\begin{align}
    &\rho_0\(\frac{\partial \uv}{\partial  t}+ \covder{\uvn}\uv +\covder{\uv}\uvn + 2\Orefv\times\uv\)= -\grad p' + \rho' \mathbf{g}_0 + \rho_0 \mathbf{g}' + \vdiver \left[ \rho_0 \gamma \boldsymbol{\tau}' \right]+\rho_0\mathbf{f}, \label{eq:first-order-momentum} \\
    & \frac{\partial \rho'}{\partial t} + \mathrm{div}(\rho' \mathbf{u}_0) + \mathrm{div}(\rho_0 \mathbf{u}) = 0,
\end{align}
\end{subequations}
where $\boldsymbol{\tau}':=\grad \uv+ (\grad \uv)^{\top}$.  It is complemented by the linearized equation of state
\begin{eqnarray*}
    \frac{p'}{p_0} = \gamma \frac{\rho'}{\rho_0} + \frac{s'}{c_v},
\end{eqnarray*}
where $\gamma = 5/3$ is the ratio of specific heats, the quantity $c_v$ is the specific heat at constant volume and $s'$ denotes the perturbed entropy. The first two terms of the right hand side of Eq.~\ref{eq:first-order-momentum} can be rewritten as
\begin{equation}
    -\grad p' + \rho' \mathbf{g}_0 = -\rho_0 \grad \left( \frac{p'}{\rho_0} \right) 
    +p' \hl{\left( - \frac{\grad \rho_0}{\rho_0} + \frac{\grad p_0}{\gamma p_0} \right)}
    , \label{eq:rhs-first-order}
\end{equation}
where we used hydrostatic equilibrium. \hl{The last term is related to the buoyancy frequency $N$ defined as}
\begin{equation}
    N^2 = \mathbf{g}_0 \cdot \left( - \frac{\grad \rho_0}{\rho_0} + \frac{\grad p_0}{\gamma p_0} \right).
\end{equation}

\begin{assumption}
We employ the Cowling approximation with $\mathbf{g}' = 0$ (see \cite{Cowling1941}), which is classic in helioseismology. We further assume the anelastic approximation which is justified if the flow speed is significantly smaller than the sound speed. This is adequate for inertial modes -- flow speeds of a few meters per second compared to kilometers per second for the surface sound speed -- and has been numerically verified \cite{Sup2025}. Two additional assumptions enable reduction to a 2D problem on the sphere: adiabaticity ($s'=0$), and strongly stratified medium (see also \cite{Watson81}) so that the term in $N^2$ in Eq.~\ref{eq:rhs-first-order} can be neglected.

\end{assumption}
Under these assumptions, the system \eqref{eqs:first-order} becomes
\begin{subequations}\label{eqs-motion-1}
\begin{align}
&\frac{\partial \uv}{\partial  t}+ \covder{\uvn}\uv +\covder{\uv}\uvn + 2\Orefv\times\uv= -\grad \left(\frac{p'}{\rho_0} \right) + \frac{1}{\rho_0} \vdiver \left[ \rho_0 \gamma \boldsymbol{\tau}' \right]+\mathbf{f}\label{eq-motion-1}, \\
&\diver(\rho_0\uv)=0, \label{eq-mass-1}
\end{align}
\end{subequations}
where the flow field $\uvn$, equivalently $\Omega$,
and the viscosity $\gamma$ will be the quantities of interest in the inverse problem.

\paragraph*{First-order equations on the sphere.}
We now rewrite the left hand side of \eqref{eq-motion-1} in spherical coordinates using the components 
$u_r, u_{\theta}, u_{\phi}$ in the expansion $\uv=u_r\base{r}+u_{\theta}\base{\theta}´+ u_{\phi}\base{\phi}$, where 
$\base{r}$, $\base{\theta}$, $\base{\phi}$  are the local orthonormal basis vectors defined in the appendix \ref{sec:appendix}. Noting that 
$\uvn=(\Omega(r,\theta)-\Oref)\sin\theta \base{\phi}
$
and using the identities 
\begin{align*}
&\Omv\times\uv = \Omega\left(-\sin\theta \,u_\phi\base{r}-\cos\theta\, u_\phi \base{\theta}+(\sin\theta \,u_r+\cos\theta \,u_\theta)\base{\phi}\right),\\
&\covder{\uvn}\uv=(\Omega-\Oref)(r,\theta)\frac{\partial \uv}{\partial\phi}+(\Omv-\Orefv)\times\uv,\\
&\covder{\uv}\uvn=(\Omv-\Orefv)\times\uv + \left(\diffq{\Omega}r(r,\theta)r\sin\theta\, u_r+\diffq{\Omega}\theta(r,\theta)\sin\theta\, u_\theta\right)\base{\phi},
\end{align*}
we obtain
\begin{align}
&\left[\base{r}; \base{\theta}; \base{\phi} \right]^{\top}
\(\frac{\partial \uv}{\partial  t}+ \covder{\uvn}\uv +\covder{\uv}\uvn + 2\Orefv\times\uv\)
\nonumber\\
\label{eq:PDE_spherical_coo}
=&\mathbf{D}_t\begin{bmatrix}u_r\\ u_{\theta}\\u_{\phi} \end{bmatrix} + 
\begin{bmatrix}
0 &0&-2\Omega\sin\theta\\
0&0& -2\Omega\cos\theta\\
\(2\Omega\sin\theta+ \diffq{\Omega}rr\sin\theta\) & \(2\Omega\cos\theta+\diffq{\Omega}\theta\sin\theta \)&0
\end{bmatrix}\begin{bmatrix}u_r\\ u_{\theta}\\u_{\phi} \end{bmatrix},
\end{align}
where $\mathbf{D}_t := \frac{\partial}{\partial  t}+(\Omega-\Oref)\frac{\partial}{\partial\phi}$ is the \emph{material derivative} acting on vector fields.

Define the $3\times 2$-matrix $\Rco(\rv):=[\base{\theta}(\rv),\base{\phi}(\rv)]$. 
Then multiplication by the $3\times 3$ matrix $\Rco(\rv)\Rco(\rv)^{\top}$ is the projection 
onto the tangential subspace and equivalent to the application of $-\rv\times \rv\times$. 
By applying $-\rv\times \rv\times$ to \eqref{eq-motion-1}, we wish to obtain an approximate equation for the horizontal velocity components 
$\uv_{\mathrm{h}}(\rv):= -\rv\times \rv\times \uv(\rv)$ given by 
$\uv_{\mathrm{h}} = \Rco(\rv)\Rco(\rv)^{\top}\uv= 
u_{\theta}\base{\theta}+ u_{\phi}\base{\phi}$. To this end, we require the following assumption:

\begin{assumption}
Viscosity and density  are radially symmetric, i.e.\ 
$\gamma=\gamma(r)$ and $\rho_0=\rho_0(r)$. 
The fluid is strongly stratified in the sense that the  
radial motion $u_r$ and its derivatives are small compared to the horizontal components $u_\phi,u_\theta$. 
\end{assumption}

Under this assumption, one can neglect the first column of the matrix in Eq.~\eqref{eq:PDE_spherical_coo}, and the application of $\Rco(\rv)\Rco(\rv)^{\top}$ \enquote{kills} the first row of this matrix (the first element of the vector in
\eqref{eq:PDE_spherical_coo} may be balanced by $\diffq{p'}{r}$). 
Therefore, we are left with the matrix 
\[
\Mmat(\rv):= \Rco(\rv) 
\begin{bmatrix}
0& -2\Omega(r,\theta)\cos\theta\\
2\Omega(r,\theta)\cos\theta+\diffq{\Omega}\theta(r,\theta)\sin\theta &0
\end{bmatrix}
\Rco(\rv)^{\top}
\]
and obtain the following system of equations for the tangential vector field 
$\uvh$ on the sphere $\Mc$ for a fixed radius $r>0$ as:
\begin{subequations}\label{eqs:stratified}
\begin{align}\label{eq:stratified}
&-\rho_0\gamma\vLaph\uvh + \rho_0 \mathbf{D}_t\uvh + \rho_0\Mmat\uvh =  \Grad p' + \rho_0 \fvh,
\\
\label{eq-mass-S2}
&\Div \uvh = 0.
\end{align}
\end{subequations}
with the right hand side $\fvh:=\Rco \Rco^{\top} \fv$. Here, we employ the fact $\grad(\rho_0(\grad\uvh)^{\top})=\Grad(\rho_0\Div\uvh)=0$ 
due to \eqref{eq-mass-S2}, with operators $\vLaph$, $\Grad$, $\Div $ defined in \eqref{eq:SurfaceDiffOp}-\eqref{eq:diffops_on_sphere} in Appendix \ref{sec:appendix}. We note that all differential operators with subscript ${\rm h}$ denote operators on spheres and are defined in Appendix \ref{sec:appendix}.

Physically, the assumption of small radial motions is motivated by an identification made in \cite{GizonInertial21}, namely, that the observed inertial modes can be viewed as eigenvalues of the 3D linearized equations of momentum, mass and energy. In this setting, the radial velocities of the eigenfunctions are small relative to the horizontal components for most observed modes. This observation is also supported by nonlinear simulations of rotating convection in a spherical shell \cite{Bekki2022}. While this assumption could potentially be justified through asymptotic analysis ($r\to\infty$ or $\Omega\to 0$), such a rigorous treatment would require amending the system \eqref{eqs-motion-1} with an atmospheric model and explicit assumptions on the coefficient functions -- a task beyond this paper's scope.

\paragraph*{Reformulation as scalar fourth-order equation.}
It follows from the Helmholtz decomposition \eqref{eq:HelmholtzDecomposition} and Eq.~\eqref{eq-mass-S2} that there exits 
a stream function $\Psi$ such that \[\rho_0 \uvh=\vCurl( \Psi  )\] with $\vCurl$ as in \eqref{eq:SurfaceDiffOp}-\eqref{eq:diffops_on_sphere}. 
In order to obtain an equation for $\Psi$, we apply $\Curl$ to \eqref{eq:stratified}. 
Noting that $\Curl (\rho_0\uvh) = \Curl\vCurl\Psi = -\Laph\Psi$ and using the identities 
\begin{align}\label{eq:defi_alpha}
&\Curl \mathbf{D}_t(\rho_0\uvh)=-D_t\Laph\Psi -\frac{1}{r^2}\diffq{\Omega}{\theta}\frac{\partial^2\Psi}{\partial\theta\,\partial\phi}, \nonumber\\
& \Curl\rho_0 \gamma\vLaph \uvh = -\gamma (\Curl\vCurl)^2\Psi = -\gamma \Laph^2\Psi\\
&\Curl\Mmat(\rho_0\uvh) = \alpha_{\Omega}\diffq{\Psi}{\phi} +\frac{1}{r^2}\diffq{\Omega}{\theta}\frac{\partial^2\Psi}{\partial\theta\,\partial\phi},
\quad \alpha_\Omega(\theta):=\frac{1}{r^2\sin\theta}\frac{d}{d\theta}\(\frac{1}{\sin\theta}\frac{d}{d\theta}(\Omega(\theta)\sin^2\theta)\) \nonumber
\end{align} 
with the analog $D_t := \frac{\partial}{\partial  t}+(\Omega-\Oref)\frac{\partial}{\partial\phi}$ 
of $\mathbf{D}_t$, 
we arrive at the scalar fourth-order equation
\begin{equation}\label{eq-vorticity}
\gamma \Laph^2\Psi
-D_t\Laph\Psi 
+ \alpha_\Omega\frac{\partial\Psi}{\partial\phi}   =f
\end{equation}
with $f:=\rho_0\Curl \fvh$, and $\alpha_\Omega$ is a linear function of $\Omega$ as in \eqref{eq:defi_alpha} for convenience.
Lastly, if we are looking for time-harmonic solutions $\Psi(t,\rv) = \Re\left(e^{i\omega t}\Psi_{\omega}(\rv)\right)$, then 
the space-dependent part has to satisfy the fourth-order elliptic equation
\begin{equation}\label{eq-vorticity-freq}
\gamma \Laph^2\Psi 
+i\omega \Laph\Psi -\beta_{\Omega}\Laph\diffq{\Psi}{\phi} 
+ \alpha_\Omega\frac{\partial\Psi}{\partial\phi}
=f
\end{equation}
with $\beta_{\Omega}(\theta):= \Omega(r,\theta)-\Omega_{\rm ref}$. The resulting equation \eqref{eq-vorticity-freq} serves as the underlying model of the forward problem studied in Section \ref{sec:manifold}.

\subsection{Separated equation as inversion model}\label{sec:modeling-separated}
To prepare for the inversion analysis,  we now derive the separated version of \eqref{eq-vorticity-freq} that will serve as our model for parameter identification.
Expanding $\Psi_{\omega}(\theta,\phi)=
\frac{1}{\sqrt{2\pi}}\sum_{m=-\infty}^{\infty}\Psih_{\omega,m}(\theta)e^{im\phi}$ into 
a Fourier series in azimuth, the coefficients $\Psih_{\omega,m}$ 
satisfy the ordinary boundary value problems
\begin{align}\label{eq:separated}
\begin{cases}
\gamma \Lapmh^2\Psih_{\omega,m}(\theta)
+i\omega\Lapmh\Psih_{\omega,m}(\theta)
-im\beta_\Omega(\theta)\Lapm\Psih_{\omega,m}(\theta) 
+im\alpha_\Omega(\theta)\Psih_{\omega,m}(\theta)=\fh_m(\theta)\\
(\Gamma_m \Psih_{\omega,m})(\theta)=0,\qquad \theta\in P:= \{0,\pi\}
\end{cases}
\end{align}
with $\Lapmh$ defined by \eqref{eq:Lap_decomp} in Appendix \ref{sec:appendix}, and 
the following boundary value operators derived in Appendix \ref{appendix:boundary}
\begin{align}\label{bc-operator}
\Gamma_0 \Psi:=\begin{pmatrix}\Psi'|_P\\ \Psi'''|_P \end{pmatrix},\quad 
\Gamma_{\pm 1} \Psi:=\begin{pmatrix}\Psi|_P\\ \Psi''|_P \end{pmatrix},\quad 
\Gamma_m \Psi:=\begin{pmatrix}\Psi|_P\\ \Psi'|_P \end{pmatrix},\; |m|\geq 2. 
\end{align}
We remark that this equation  reduces to Eq.~(2.9) in \cite{Watson81}  if $\gamma=0$ and to Eq.~(8) in \cite{Damien22} if $\fh_m=0$.
However, vanishing Cauchy data are imposed at the poles in \cite{Damien22}, which for $|m|\leq 1$ may yield solutions that do not
correspond to the unseparated equation \eqref{eq-vorticity-freq}. 

The system \eqref{eq:separated}–\eqref{bc-operator}  provides the forward model for the parameter inversion problem addressed in Section \ref{sec:ip}. Solving this inverse problem requires inertial wave observations. The data acquisition process is described in the following section.

\subsection{Observation of solar inertial modes}\label{sec:data}

Solar physicists routinely measure the horizontal flow field $\mathbf{u}_h(\rv,t)$ at the solar surface, in the rotating frame $\Omega_{\rm ref}$ at intermediate spatial scales with a typical temporal cadence of one day \cite{GizonInertial21}. These measurements are obtained using local correlation tracking (LCT), which infers large-scale flows from the motion of small-scale convective structures (granules) observed in intensity images. High-resolution imaging from instruments such as HMI on the SDO spacecraft is required to resolve these granules. The stream function can be computed from 
$
\Psi = -\Delta_h^{-1}  (\Curl(\rho_0 \mathbf{u}_h)),
$
then Fourier transformed in time and longitude to obtain $\Psi^{m,\omega}(\theta)$, which provides data for the inverse problems. 
Due to the limited observation coverage (only half of the Sun is visible at any time), the azimuthal Fourier transform integrates over $[0,\pi]$ instead of $[0,2\pi]$, causing leakage of the signal between the harmonic degrees $m$; see \cite{Basu2016} for a review. We will not concern ourselves with this aspect, but instead focus on the observational limitation: higher latitudes cannot easily be observed due to the line-of-sight projection.
Thus, we assume to observe
\begin{align}\label{data}
y^{\delta}(\theta) :=\Psi^{m,\omega}(\theta)+\mathrm{noise}(\theta), 
\quad \theta\in I:= (0,\pi) \quad\text{or}\quad\theta\in I':=(\epsilon, \pi-\epsilon), \epsilon \geq 0,
\end{align}
where $\delta = \|\mathrm{noise}\|$ denotes the noise level. The case $\epsilon=0$ corresponds to full surface coverage, while $\epsilon > 0$ takes into account the missing data at higher latitudes.

\section{Inertial waves on the manifold $\Mc$}\label{sec:manifold}

\label{sec:well-posed}
With the model established, we now address the forward problem by proving existence, uniqueness and stability of wave solutions, for both the full equation \eqref{eq-vorticity-freq} and its separated counterpart \eqref{eq:separated}. We first show the well-posedness of Eq.~\eqref{eq-vorticity-freq} associated to the differential operator 
\begin{align}\label{B}
\mathbf{B}(\Psi):=\gamma \Laph^2\Psi+i\omega\Laph\Psi-\beta_\Omega
\Laph\frac{\partial\Psi}{\partial\phi} +\alpha_\Omega\frac{\partial\Psi}{\partial\phi}
\end{align}
with recalling that $\beta_{\Omega}(\theta)= \Omega(\theta)-\Omega_{\mathrm{ref}}$ and 
$\alpha_{\Omega}(\theta)=\frac{1}{r^2\sin\theta}\frac{d}{d\theta}\big(\frac{1}{\sin\theta}\frac{d}{d\theta}(\Omega(\theta)\sin^2\theta)\big)$. 

Consider the space $\Hd{2}$ of functions in $H^2(\Mc)$ with zero mean defined in Appendix \ref{sec:appendix}. By partial integration with taking into account symmetry of the bi-Laplacian and Laplacian operators (see Appendix, Lemma \ref{lem:symmetry}), the corresponding sesquilinear form 
$\mathbf{A}: H_{\diamond}^2(\Mc)\times H_{\diamond}^2(\Mc)\to \C$ defined as   
$\mathbf{A}[\Psi, \psi]:=\langle \mathbf{B}(\Psi),\psi\rangle_{H_{\diamond}^{-2},H_{\diamond}^2}$ is given by 
\begin{align}\label{eq:defi_A}
\begin{aligned}
\mathbf{A}[\Psi, \psi]&=\gamma\lsp\Laph\Psi,\Laph\psi\rsp-i\omega\lsp\Grad\Psi, \Grad\psi\rsp
+(\mathbf{A}^{\alpha} + \mathbf{A}^{\beta})[\Psi,\psi],\\
&\mathbf{A}^{\alpha}[\Psi,\psi]:= -\lsp\alpha_{\Omega}\Psi ,\frac{\partial\psi}{\partial\phi} \rsp,\qquad
\mathbf{A}^{\beta}[\Psi,\psi]:= \lsp \beta_{\Omega}\Laph\Psi ,\frac{\partial\psi}{\partial\phi} \rsp.
\end{aligned}
\end{align}
Since the coefficients $\alpha_{\Omega}$ and $\beta_{\Omega}$ are independent of $\phi$, 
this sesquilinear form is block diagonal with respect to the decomposition 
$\Hd{2} = \bigoplus_{m\in Z}^{\infty}V_m$ with the subspaces \[V_m:=H^2_{\diamond}(\Mc;m)\]
introduced in \eqref{eq:Hs_decomp}. In other words, we have
$\mathbf{A}(\Psi_m,\Psi_n)=0$ for $\Psi_m\in V_m$, $\Psi_n\in V_n$ and $m\neq n$.
Hence, the restrictions $\mathbf{A}_m:= \mathbf{A}|_{V_m\times V_m}$ 
fulfill 
\begin{align}\label{eq:A_separation}
\mathbf{A}(\Psi,\psi) =\sum_{m\in\Z}\mathbf{A}_m(\Psi_m,\psi_m)
\end{align} 
if $\Psi = \sum_{m\in\Z}\Psi_m$ and  $\psi = \sum_{m\in\Z}\psi_m$ with 
$\Psi_m,\psi_m\in V_m$. We will also analyze the separated sesquilinear forms given by 
\begin{align}\label{eq:defi_Am}
&\mathbf{A}_m[\Psi_m, \psi_m]\\
&\quad=
\gamma\lan\Lapm\Psi_m,\Lapm\psi_m\ran-i\omega\lan\Gradm\Psi_m, \Gradm\psi_m\ran 
+ (\mathbf{A}^{\alpha}_m+\mathbf{A}^{\beta}_m)[\Psi_m,\psi_m],\nonumber\\
&\mathbf{A}^{\alpha}_m[\Psi_m,\psi_m]
:= im \lsp \alpha_{\Omega}\Psi_m,\psi_m \rsp,\qquad \;
\mathbf{A}^{\beta}_m[\Psi_m,\psi_m]:= -im\lan \beta_\Omega\Lapm\Psi_m,\psi_m \ran. \nonumber
\end{align}
with $\mathbf{A}^{\alpha}_m, \mathbf{A}^{\beta}_m:V_m\times V_m\to \C$ as restrictions of $\mathbf{A}^{\alpha}$, $\mathbf{A}^{\beta}$  defined in \eqref{eq:defi_A}. Here, $\Lapm$, $\Gradm$ are the decomposed operators as defined in \eqref{eq:Lap_decomp}, \eqref{eq:Grad_decomp}.

\subsection{Boundedness}\label{sec:boundedness}
It is straightforward to verify that $\mathbf{A}^{\alpha}$ and $\mathbf{A}^{\alpha}_m$ are bounded if 
$\Omega\in W^{2,\infty}(0,\pi)$. However, to reduce the regularity assumption on the unknown rotation, we only constrain 
\begin{align}\label{eq:Omega_regularity}
\Omega\in H^1(\Mc; m=0)
\end{align}
\hl{to obtain well-posedness of wave solutions in $H^2$. For higher regularity of $\Psi$, as discussed in later sections, stronger assumption on $\Omega$ will be introduced. For now, we focus on boundedness results.}
Under this reduced regularity assumption, establishing boundedness requires additional technical analysis. 
In the following, we will frequently employ continuous embeddings between function
spaces $X,Y$ denoted the embedding constants $C_{X\to Y}$, i.e., 
$\|\cdot\|_{Y}\leq C_{X\to Y} \|\cdot\|_X$.
\begin{lemma}\label{lem:Aalpha}
 If $\Omega\in C^2([0,\pi])$, then 
for any $m\in\Z$ and $\Psi_m,\psi_m\in V_m\cap C^{\infty}(\Mc)$ we have
\begin{align}\label{eq:alt_defi_Aalpha}
\mathbf{A}^{\alpha}[\Psi_m,\psi_m]&= 
\lsp \tilde{\alpha}_{\Omega}\Grad \Psi_m,\vCurl\psi_m\rsp
\end{align}
where $\tilde{\alpha}_{\Omega}(\theta):= \Omega'(\theta)\sin\theta+2\Omega(\theta)\cos\theta$. Moreover,
\begin{align}\label{eq:Aalpha_bound}
\left|\mathbf{A}^{\alpha}[\Psi_m,\psi_m]\right|
\leq C \|\Omega\|_{H^1}\|\Psi_m\|_{H^2}\|\psi_m\|_{H^{3/2}}
\leq C \|\Omega\|_{H^1}\|\Psi_m\|_{H^2}\|\psi_m\|_{H^2}
\end{align}
where $C:=3C_{H_{\diamond}^1\to L^6}C_{H_{\diamond}^{1/2}\to L^3}$. 

Under the regularity assumption \eqref{eq:Omega_regularity}, the right hand side of \eqref{eq:alt_defi_Aalpha} has a unique 
continuous extension to a bounded sesquilinear form on $H^2_{\diamond}(\Mc)$. And the inequality \eqref{eq:Aalpha_bound} is satisfied for all $\Psi,\psi\in H^2_{\diamond}(\Mc)$, that is,
\begin{align}\label{eq:Aalpha_bound-full}
\left|\mathbf{A}^{\alpha}[\Psi,\psi]\right|
\leq C \|\Omega\|_{H^1}\|\Psi\|_{H^2}\|\psi\|_{H^{3/2}}
\leq C \|\Omega\|_{H^1}\|\Psi\|_{H^2}\|\psi\|_{H^2}.
\end{align}
\end{lemma}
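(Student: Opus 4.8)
The plan is to prove the three assertions in order: the integration-by-parts identity \eqref{eq:alt_defi_Aalpha}, the bound \eqref{eq:Aalpha_bound} for smooth data, and then the extension to $\Omega\in H^1$. The starting observation is the elementary identity $\frac{d}{d\theta}(\Omega\sin^2\theta)=\sin\theta\,\tilde{\alpha}_{\Omega}$, which rewrites the twice-differentiated coefficient as $\alpha_\Omega=\frac{1}{r^2\sin\theta}\tilde{\alpha}_{\Omega}'$; thus $\alpha_\Omega$ carries two $\theta$-derivatives of $\Omega$ whereas $\tilde{\alpha}_{\Omega}$ carries only one. This is the structural mechanism that will lower the regularity requirement from $W^{2,\infty}$ to the assumption \eqref{eq:Omega_regularity}.

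To verify \eqref{eq:alt_defi_Aalpha} I would write $\Psi_m=\hat{\Psi}_m(\theta)e^{im\phi}$ and $\psi_m=\hat{\psi}_m(\theta)e^{im\phi}$. Substituting $\alpha_\Omega=\tilde{\alpha}_{\Omega}'/(r^2\sin\theta)$ into $\mathbf{A}^{\alpha}[\Psi_m,\psi_m]=-\langle\alpha_{\Omega}\Psi_m,\partial_\phi\psi_m\rangle$, the spherical measure $r^2\sin\theta\,d\theta\,d\phi$ cancels the singular prefactor and the $\phi$-integration collapses the pairing to a one-dimensional integral of $\tilde{\alpha}_{\Omega}'\,\hat{\Psi}_m\overline{\hat{\psi}_m}$ in $\theta$. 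I would then integrate by parts in $\theta$ to obtain $-2\pi im\int_0^\pi\tilde{\alpha}_{\Omega}\,\partial_\theta(\hat{\Psi}_m\overline{\hat{\psi}_m})\,d\theta$. Independently, expanding $\Grad$ and $\vCurl$ in the orthonormal frame $(\base{\theta},\base{\phi})$ and forming the pointwise product $\tilde{\alpha}_{\Omega}\,\Grad\Psi_m\cdot\overline{\vCurl\psi_m}$ yields, after $\phi$-integration, exactly the same one-dimensional integral, which proves the identity. The main obstacle is the boundary term $[\tilde{\alpha}_{\Omega}\,\hat{\Psi}_m\overline{\hat{\psi}_m}]_0^\pi$ produced by the integration by parts: since $\tilde{\alpha}_{\Omega}(0)=2\Omega(0)$ and $\tilde{\alpha}_{\Omega}(\pi)=-2\Omega(\pi)$ need not vanish, it must instead be killed by the $\theta$-profiles. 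For $m\neq0$, smoothness of $\Psi_m,\psi_m$ on $\Mc$ forces $\hat{\Psi}_m(\theta),\hat{\psi}_m(\theta)=O(\theta^{|m|})$ as $\theta\to0$ (and analogously at $\pi$), so both profiles vanish at the poles; the case $m=0$ is trivial because $\partial_\phi\psi_0=0$ and, by orthogonality of $\base{\theta},\base{\phi}$, the $\Grad/\vCurl$ pairing vanishes as well.

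For the estimate \eqref{eq:Aalpha_bound} I would bound $|\langle\tilde{\alpha}_{\Omega}\Grad\Psi_m,\vCurl\psi_m\rangle|$ pointwise by $|\tilde{\alpha}_{\Omega}|\,|\Grad\Psi_m|\,|\vCurl\psi_m|$ and apply Hölder's inequality with the three exponents $(2,6,3)$, noting $\tfrac12+\tfrac16+\tfrac13=1$. The factor $\tilde{\alpha}_{\Omega}=\Omega'\sin\theta+2\Omega\cos\theta$ lies in $L^2$ and is controlled by $\|\Omega\|_{H^1}$ -- here the single derivative is essential, and the triangle inequality over its two summands produces the constant $3$. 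The second factor obeys $\|\Grad\Psi_m\|_{L^6}\le C_{H^1_\diamond\to L^6}\|\Psi_m\|_{H^2}$ since $\Psi_m\in H^2$ gives $\Grad\Psi_m\in H^1\hookrightarrow L^6$, and the third obeys $\|\vCurl\psi_m\|_{L^3}\le C_{H^{1/2}_\diamond\to L^3}\|\psi_m\|_{H^{3/2}}$ since $\vCurl$ loses one derivative and $H^{1/2}\hookrightarrow L^3$ on $\Mc$. Multiplying the three constants gives $C=3\,C_{H^1_\diamond\to L^6}C_{H^{1/2}_\diamond\to L^3}$, and the weaker bound with $\|\psi_m\|_{H^2}$ follows from $H^2\hookrightarrow H^{3/2}$.

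Finally, for the reduced regularity \eqref{eq:Omega_regularity} I would invoke continuity and density. The right-hand side of \eqref{eq:alt_defi_Aalpha} is well-defined whenever $\tilde{\alpha}_{\Omega}\in L^2$, i.e.\ already for $\Omega\in H^1$, and the Hölder estimate just derived applies verbatim to arbitrary (not single-$m$) arguments, showing it is a bounded sesquilinear form on $H^2_\diamond(\Mc)\times H^2_\diamond(\Mc)$ that depends continuously on $\tilde{\alpha}_{\Omega}\in L^2$. Since $V_m\cap C^\infty(\Mc)$ is dense in $V_m$, this form coincides on smooth data with $\mathbf{A}^{\alpha}$ (for $\Omega\in C^2$) and extends, by the bounded-extension theorem, uniquely to all of $H^2_\diamond(\Mc)$; approximating $\Omega\in H^1$ by $C^2$ functions in $H^1$ and using continuity in $\tilde{\alpha}_{\Omega}$ then transfers both the identity and the bound to the general case, yielding \eqref{eq:Aalpha_bound-full}. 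I expect the genuinely delicate point to remain the pole analysis of the boundary term, the Hölder/embedding bookkeeping and the density argument being routine.
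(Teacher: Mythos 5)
Your proof is correct and follows essentially the same route as the paper's: rewrite $\alpha_\Omega$ as $\tilde{\alpha}_{\Omega}'/(r^2\sin\theta)$, integrate by parts in $\theta$ with the boundary terms killed by the vanishing of the $\theta$-profiles at the poles (trivially for $m=0$), identify the result with the $\Grad$/$\vCurl$ pairing, apply the generalized H\"older inequality with exponents $(2,6,3)$ together with the embeddings $H^1\embed L^6$ and $H^{1/2}\embed L^3$, and conclude by density. As a minor remark, your assignment of $L^6$ to $\Grad\Psi_m$ and $L^3$ to $\vCurl\psi_m$ is the one consistent with the asymmetric bound \eqref{eq:Aalpha_bound} as stated, whereas the paper's displayed H\"older step has these two exponents interchanged.
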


\begin{proof}
Let $\Psi_m,\psi_m\in V_m\cap C^{\infty}(\Mc)$. Note that $\alpha_{\Omega}(\theta) = \frac{1}{r^2\sin\theta}\frac{d}{d\theta}\tilde{\alpha}_{\Omega}(\theta)$. 
As $\tilde{\alpha}_{\Omega}\in C^1([0,\pi])$, 
$\Psi_0(\theta)\frac{\partial\overline{\psi_0}}{\partial \phi}(\theta)=0$ and 
$\Psi_m(\theta)\frac{\partial\overline{\psi_m}}{\partial \phi}(\theta)=-im\Psi_m(\theta)\overline{\psi_m}(\theta)=0$ 
for $\theta\in \{0,\pi\}$,
$m\neq 0$, we can perform a partial integration in $\theta$ to obtain 
\begin{align}\label{Aalpha}
&\mathbf{A}^{\alpha}[\Psi_m,\psi_m] \nonumber\\
&=\int_0^{2\pi}\!\int_0^\pi
\tilde{\alpha}_{\Omega} \frac{\partial}{\partial\theta}
\(\Psi_m\frac{\overline{\psi_m}}{\partial\phi}\)\,d\theta\,d\phi=\int_0^{2\pi}\!\int_0^\pi\tilde{\alpha}_{\Omega}\( \Psi_m\frac{\partial^2\overline{\psi_m}}{\partial\phi\partial\theta}
+\frac{\partial\Psi_m}{\partial\theta}
\frac{\partial\overline{\psi_m}}{\partial\phi} \)\,d\theta\,d\phi \nonumber\\
&=\int_0^{2\pi}\!\int_0^\pi\tilde{\alpha}_{\Omega} \( 
- \frac{\partial\Psi_m}{\partial\phi} \frac{\partial\overline{\psi_m}}{\partial\theta}
+\frac{\partial\Psi_m}{\partial\theta} \frac{\partial\overline{\psi_m}}{\partial\phi}
\) \,d\theta\,d\phi\\
&= \int_{\Mc} 
\frac{\tilde{\alpha}_{\Omega}(\theta)}{r^2 \sin\theta}\( 
- \frac{\partial\Psi_m}{\partial\phi} \frac{\partial\overline{\psi_m}}{\partial\theta}
+\frac{\partial\Psi_m}{\partial\theta} \frac{\partial\overline{\psi_m}}{\partial\phi}
\)\,ds, \nonumber
\end{align}
where in the second line we have performed a further partial integration in $\phi$. 
By using \eqref{eq:diffops_on_sphere}, we then achieve \eqref{eq:alt_defi_Aalpha}. Next, the generalized H\"older's inequality 
\begin{align}\label{eq:Hoelder_triple}
\left|\int_{\Mc}abc\,ds\right|\leq \|a\|_{L^2}\|b\|_{L^3}\|c\|_{L^6}
\end{align} 
implies
\[
\left|\mathbf{A}^{\alpha}[\Psi_m,\psi_m]\right|
\leq \|\tilde{\alpha}_{\Omega}\|_{L^2} \|\Grad \Psi_m\|_{L^3} \|\Grad \psi_m\|_{L^6}.
\]
The bound $\|\tilde{\alpha}_{\Omega}\|_{L^2}\leq \|\frac{\partial\Omega}{\partial \theta}\|_{L^2}+ 2\|\Omega\|_{L^2}\leq 3\|\Omega\|_{H^1}$ together with
the continuity of the embeddings $H^1(\Mc)\hookrightarrow L^6(\Mc)$ 
and $H^{1/2}(\Mc)\hookrightarrow L^3(\Mc)$ in \eqref{eq:embedding_Lp} yields \eqref{eq:Aalpha_bound}.

Since $\mathbf{A}^{\alpha}$ separates in $V_m$ and the spaces $V_m$ are orthogonal in $H^2_{\diamond}(\Mc)$, the bound \eqref{eq:Aalpha_bound} also holds true for finite linear combinations of 
$\Psi_m$ and $\psi_m$ of the form above. The last statement \eqref{eq:Aalpha_bound-full} follows from the density 
of such linear combinations (or even linear combinations of spherical hamonics) in $H^2_{\diamond}(\Mc)$. 
\end{proof}

\begin{proposition}\label{prop:boundedness}
Suppose that $\Omega$ satisfies \eqref{eq:Omega_regularity}. Then 
the sesquilinear form $\mathbf{A}$ defined by \eqref{eq:defi_A} 
is bounded on $H^2_{\diamond}(\Mc)$, and the separated bilinear forms $\mathbf{A}_m$ defined by \eqref{eq:defi_Am} are 
bounded on $V_m$. Both $\mathbf{A}$ and $\mathbf{A}_m$ depend continuously on $\Omega$ 
with respect to its $H^1$-norm. 
\end{proposition}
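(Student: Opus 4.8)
The plan is to estimate the four terms of $\mathbf{A}$ in \eqref{eq:defi_A} separately and then add them, exploiting that $\mathbf{A}$ depends affine-linearly on $\Omega$ to deduce continuous dependence at the end. The two terms not involving $\Omega$ are immediate: since $\gamma$ and $\omega$ are fixed constants at the chosen radius $r$, the Cauchy--Schwarz inequality together with $\|\Laph\cdot\|_{L^2}\leq C\|\cdot\|_{H^2}$ and $\|\Grad\cdot\|_{L^2}\leq\|\cdot\|_{H^1}\leq\|\cdot\|_{H^2}$ gives $|\gamma\langle\Laph\Psi,\Laph\psi\rangle|+|\omega\langle\Grad\Psi,\Grad\psi\rangle|\leq C(|\gamma|+|\omega|)\|\Psi\|_{H^2}\|\psi\|_{H^2}$ uniformly in $\Omega$. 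The term $\mathbf{A}^{\alpha}$ is already handled by Lemma \ref{lem:Aalpha}, which provides $|\mathbf{A}^{\alpha}[\Psi,\psi]|\leq C\|\Omega\|_{H^1}\|\Psi\|_{H^2}\|\psi\|_{H^2}$ on all of $H^2_{\diamond}(\Mc)$.

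This leaves the single genuinely new estimate, the bound on $\mathbf{A}^{\beta}[\Psi,\psi]=\langle\beta_{\Omega}\Laph\Psi,\partial_\phi\psi\rangle$. I would apply the generalized H\"older inequality \eqref{eq:Hoelder_triple} with the factors placed as $\Laph\Psi\in L^2$, $\beta_{\Omega}\in L^3$, and $\partial_\phi\psi\in L^6$, noting $1/2+1/3+1/6=1$. The first factor uses the full $H^2$-regularity of $\Psi$ via $\|\Laph\Psi\|_{L^2}\leq C\|\Psi\|_{H^2}$; the second uses $\beta_{\Omega}=\Omega-\Oref\in H^{1/2}\hookrightarrow L^3$ with $\|\beta_{\Omega}\|_{H^{1/2}}\leq\|\Omega\|_{H^1}+C|\Oref|$. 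The key observation is for the third factor: differentiation along the azimuthal Killing field gains a degree of smoothness relative to a generic tangential derivative, because on spherical harmonics $\partial_\phi Y_\ell^{m}=im\,Y_\ell^{m}$ with $m^2\leq\ell(\ell+1)$, whence $\|\partial_\phi\psi\|_{H^1}\leq C\|\psi\|_{H^2}$; combined with $H^1(\Mc)\hookrightarrow L^6(\Mc)$ this yields $\|\partial_\phi\psi\|_{L^6}\leq C\|\psi\|_{H^2}$. Multiplying the three bounds gives $|\mathbf{A}^{\beta}[\Psi,\psi]|\leq C(\|\Omega\|_{H^1}+1)\|\Psi\|_{H^2}\|\psi\|_{H^2}$.

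Summing the four estimates establishes boundedness of $\mathbf{A}$ on $H^2_{\diamond}(\Mc)$. Boundedness of each $\mathbf{A}_m$ then follows immediately, since $\mathbf{A}_m=\mathbf{A}|_{V_m\times V_m}$ is the restriction of $\mathbf{A}$ to the closed subspace $V_m\subset H^2_{\diamond}(\Mc)$, so the same operator-norm bound applies (for the $\mathbf{A}^{\alpha}_m$ piece one invokes the integral representation of Lemma \ref{lem:Aalpha}, valid on $V_m$). For the continuous dependence on $\Omega$, I would use that only $\mathbf{A}^{\alpha}$ and $\mathbf{A}^{\beta}$ depend on $\Omega$ and that both are linear in $\Omega$, the constant $\Oref$ in $\beta_{\Omega}$ cancelling in differences; hence for two rotations $\Omega_1,\Omega_2$ the form $\mathbf{A}_{\Omega_1}-\mathbf{A}_{\Omega_2}$ equals the $\mathbf{A}^{\alpha}+\mathbf{A}^{\beta}$ form built from $\Omega_1-\Omega_2$, and the estimates above give $|(\mathbf{A}_{\Omega_1}-\mathbf{A}_{\Omega_2})[\Psi,\psi]|\leq C\|\Omega_1-\Omega_2\|_{H^1}\|\Psi\|_{H^2}\|\psi\|_{H^2}$, i.e.\ Lipschitz dependence in the $H^1$-norm; the same bound restricts to each $V_m$.

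The main obstacle is exactly the $\mathbf{A}^{\beta}$ term: because $\Laph\Psi$ sits in $L^2$ with no regularity to spare, the estimate only closes once one recognizes that the azimuthal derivative of the $H^2$ test function is one degree smoother than a naive tangential derivative ($\partial_\phi\colon H^2\to H^1$), and that on the two-dimensional manifold $\Mc$ the Sobolev space $H^1$ embeds into every $L^p$ with $p<\infty$, leaving just enough room to split the three exponents as $(2,3,6)$.
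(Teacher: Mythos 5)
Your proof is correct, and its overall architecture (term-by-term estimates, Lemma \ref{lem:Aalpha} for $\mathbf{A}^{\alpha}$, restriction to $V_m$, affine-linearity in $\Omega$ for the continuity statement) matches the paper's. The one place you genuinely diverge is the H\"older splitting for $\mathbf{A}^{\beta}$. The paper assigns the exponents as $\|\beta_{\Omega}\|_{L^6}\,\|\Laph\Psi\|_{L^2}\,\|\Grad\psi\|_{L^3}$, using $H^1\embed L^6$ for $\beta_{\Omega}$ and $H^{1/2}\embed L^3$ for $\Grad\psi$, which yields the bound $C\|\Omega-\Oref\|_{H^1}\|\Psi\|_{H^2}\|\psi\|_{H^{3/2}}$; you instead place $\beta_{\Omega}\in L^3$ and $\partial_\phi\psi\in L^6$, exploiting the (correct, and nicely justified via $|m|^2\leq\ell(\ell+1)$) observation that $\partial_\phi\colon H^2\to H^1$ is bounded. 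Both splittings close the estimate and prove the proposition as stated. What the paper's choice buys is the control of $\mathbf{A}^{\beta}$ by the strictly weaker norm $\|\psi\|_{H^{3/2}}$, which is not cosmetic: it is exactly what feeds the G\aa rding-type inequality in the proof of Theorem \ref{theo:non-separate-eq}, where the lower-order $H^{3/2}$ term is absorbed by interpolation and compactness of $\Hd{3/2}\embed\Hd{2}$ to obtain the Fredholm property and the explicit frequency threshold \eqref{non-separated-omega}. Your bound, which uses the full $H^2$-norm of $\psi$, would not serve that later purpose, so if you intend your estimate to replace the paper's you would need to rework it (e.g., note that your gain on $\partial_\phi\psi$ can be traded: $\|\partial_\phi\psi\|_{L^6}\leq C\|\partial_\phi\psi\|_{H^1}$ can be weakened to an interpolate of $\|\psi\|_{H^{3/2}}$-type only after redistributing regularity back onto $\beta_{\Omega}$). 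One further small remark: your constant acquires an additive $+1$ from $|\Oref|$ via $\|\beta_{\Omega}\|_{H^{1/2}}\leq\|\Omega\|_{H^1}+C|\Oref|$, just as the paper's does through $\|\Omega-\Oref\|_{H^1}$; this is harmless for boundedness, and you correctly observe that $\Oref$ cancels in differences so the Lipschitz dependence on $\Omega$ is unaffected.
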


\begin{proof}
Boundedness of $\mathbf{A}^{\alpha}$ has been shown in Lemma \ref{lem:Aalpha}. For $\mathbf{A}^{\beta}$, 
we again use the H\"older inequality \eqref{eq:Hoelder_triple} and the 
same Sobolev embeddings as above to obtain
\begin{equation}\label{eq:Abeta_bound}
\begin{split}
|\mathbf{A}^{\beta}(\Psi,\psi)|&\leq \|\beta\|_{L^6}\|\Delta_{\rm h}\Psi\|_{L^2}\|\Grad\psi\|_{L^3}\\
&\leq C_{H^1\to L^6} C_{H^{1/2}\to L^3}
\|\Omega-\Oref\|_{H^1}\|\Psi\|_{H^2}\|\psi\|_{H^{3/2}}.
\end{split}
\end{equation} 
Boundedness of the first two terms in \eqref{eq:defi_A} is obvious, and boundedness of 
$\mathbf{A}_m$ follows from that of $\mathbf{A}$, using \eqref{eq:A_separation} and the pairwise orthogonality of the spaces $V_m$. 
\end{proof}

\subsection{Fredholm property and well-posedness}\label{sec:wellposed}
\begin{theorem}\label{theo:non-separate-eq} 
Let $\gamma>0$ and $\Omega\in H^1(\Mc; m=0)$ as in Proposition \ref{prop:boundedness}.
\begin{enumerate}
\item 
The differential operator $\mathbf{B}:\Hd{2}\to \Hd{-2}$ defined in \eqref{B}
is Fredholm of index $0$.
\item Moreover, $\mathbf{B}$ is boundedly invertible at sufficiently large frequencies satisfying 
\begin{align}\label{non-separated-omega}
|\omega|>\frac{4}{\gamma^{3}}(C_{H_{\diamond}^1\to L^6}C_{H_{\diamond}^{1/2}\to L^3})^4\big(\|\Omega-\Oref\|^2_{H^1}+ 9\|\Omega\|^2_{H^1} \big)^2.
\end{align}
\end{enumerate}
\end{theorem}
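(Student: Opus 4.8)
The plan is to handle both claims through the single sesquilinear form $\mathbf{A}$ of \eqref{eq:defi_A}, exploiting that on the zero-mean space $\Hd{2}$ the principal part $\gamma\lan\Laph\Psi,\Laph\Psi\ran=\gamma\|\Laph\Psi\|_{L^2}^2$ is equivalent to $\gamma\|\Psi\|_{\Hd{2}}^2$, whereas the lower-order terms $\mathbf{A}^{\alpha},\mathbf{A}^{\beta}$ are controlled only in the weaker norm $\|\Psi\|_{\Hd{3/2}}$ via Lemma \ref{lem:Aalpha} and Proposition \ref{prop:boundedness}.

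\textbf{Part 1 (Fredholm of index $0$).} I would first establish a $\mathrm{G\mathring{a}rding}$ inequality for the real part of $\mathbf{A}$. Since $-i\omega\lan\Grad\Psi,\Grad\Psi\ran$ is purely imaginary, we have $\re{\mathbf{A}[\Psi,\Psi]}=\gamma\|\Laph\Psi\|_{L^2}^2+\re{(\mathbf{A}^{\alpha}+\mathbf{A}^{\beta})[\Psi,\Psi]}$. Using \eqref{eq:Aalpha_bound-full} and \eqref{eq:Abeta_bound} together with the interpolation $\|\Psi\|_{\Hd{3/2}}\le\|\Psi\|_{\Hd{2}}^{3/4}\|\Psi\|_{L^2}^{1/4}$ and Young's inequality, the lower-order part is bounded by $\varepsilon\|\Psi\|_{\Hd{2}}^2+C_{\varepsilon}\|\Psi\|_{L^2}^2$; choosing $\varepsilon$ small relative to $\gamma$ yields $\re{\mathbf{A}[\Psi,\Psi]}\ge c\|\Psi\|_{\Hd{2}}^2-k\|\Psi\|_{L^2}^2$ for some $c>0$ and $k\ge 0$. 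By Lax--Milgram the operator $\mathbf{B}+kJ$ is then boundedly invertible, where $J\colon\Hd{2}\to\Hd{-2}$ is induced by the $L^2$ pairing. As $J$ factors through the Rellich-compact embedding $\Hd{2}\embed L^2$, it is compact, so $\mathbf{B}$ in \eqref{B} is a compact perturbation of an isomorphism and therefore Fredholm of index $0$.

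\textbf{Part 2 (bounded invertibility at large $|\omega|$).} By Part 1 it suffices to prove injectivity. Assuming $\mathbf{B}\Psi=0$ and testing against $\Psi$ gives
\[
\gamma\|\Laph\Psi\|_{L^2}^2-i\omega\|\Grad\Psi\|_{L^2}^2+(\mathbf{A}^{\alpha}+\mathbf{A}^{\beta})[\Psi,\Psi]=0 .
\]
Reading off the real and imaginary parts bounds both $\gamma\|\Laph\Psi\|_{L^2}^2$ and $|\omega|\,\|\Grad\Psi\|_{L^2}^2$ by $|(\mathbf{A}^{\alpha}+\mathbf{A}^{\beta})[\Psi,\Psi]|$, which by \eqref{eq:Aalpha_bound-full}, \eqref{eq:Abeta_bound} and the interpolation $\|\Psi\|_{\Hd{3/2}}^2\le\|\Psi\|_{\Hd{1}}\|\Psi\|_{\Hd{2}}$ (with the homogeneous normalizations $\|\Psi\|_{\Hd{2}}=\|\Laph\Psi\|_{L^2}$ and $\|\Psi\|_{\Hd{1}}=\|\Grad\Psi\|_{L^2}$ from the Appendix) is at most $K\,\|\Laph\Psi\|_{L^2}^{3/2}\|\Grad\Psi\|_{L^2}^{1/2}$, where $K:=C_{H_{\diamond}^1\to L^6}C_{H_{\diamond}^{1/2}\to L^3}\big(\|\Omega-\Oref\|_{H^1}+3\|\Omega\|_{H^1}\big)$. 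Writing $a:=\|\Laph\Psi\|_{L^2}$ and $b:=\|\Grad\Psi\|_{L^2}$, the real-part estimate gives $\gamma^2 a\le K^2 b$ while the imaginary-part estimate gives $b\le(K/|\omega|)^{2/3}a$; chaining these forces $a=0$, hence $\Psi=0$, unless $|\omega|\le K^4/\gamma^3$. Finally the elementary inequality $\big(\|\Omega-\Oref\|_{H^1}+3\|\Omega\|_{H^1}\big)^4\le 4\big(\|\Omega-\Oref\|_{H^1}^2+9\|\Omega\|_{H^1}^2\big)^2$ converts $K^4/\gamma^3$ into the explicit threshold \eqref{non-separated-omega}.

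\textbf{Main obstacle.} The delicate point throughout is to make do with the weak hypothesis $\Omega\in H^1(\Mc;m=0)$: the coefficients $\alpha_{\Omega},\beta_{\Omega}$ are too rough for a direct coercivity argument, so the entire proof rests on the weaker-norm bounds of Lemma \ref{lem:Aalpha} and Proposition \ref{prop:boundedness} and on trading half a derivative by interpolation. For Part 2 the additional subtlety is that $\mathbf{A}$ is genuinely \emph{not} coercive; rather than seeking a one-sided lower bound, one must use the real ($\gamma$, $\Laph$) and imaginary ($\omega$, $\Grad$) parts as two independent scalar inequalities and combine them, which is precisely what produces the characteristic $\gamma^{-3}$ scaling of the frequency threshold.
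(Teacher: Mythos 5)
Your proposal is correct, and while Part 1 follows essentially the paper's idea (a G\r{a}rding-type inequality plus compactness of the lower-order terms; you merely make the ``compact perturbation of an isomorphism'' step explicit via the operator $\mathbf{B}+kJ$ rather than citing the Fredholm result for G\r{a}rding forms directly), your Part 2 takes a genuinely different route. The paper upgrades the G\r{a}rding inequality to full coercivity of $\mathbf{A}$ for large $|\omega|$ -- combining $\sqrt{2}|\mathbf{A}[\Psi,\Psi]|\geq|\Re\mathbf{A}[\Psi,\Psi]|+|\Im\mathbf{A}[\Psi,\Psi]|$ with the interpolation $\|\Psi\|_{H_\diamond^{3/2}}^2\leq\|\Psi\|_{H_\diamond^1}\|\Psi\|_{H_\diamond^2}$ and Young's inequality -- and then invokes Lax--Milgram; this has the advantage of yielding an explicit bound $\|\mathbf{B}^{-1}\|\leq 2\sqrt{2}/\gamma$ directly from the coercivity constant. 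You instead prove only injectivity of $\mathbf{B}$ and let the index-zero Fredholm property from Part 1 do the rest, splitting the identity $\mathbf{A}[\Psi,\Psi]=0$ into two independent scalar inequalities for $a=\|\Laph\Psi\|_{L^2}$ and $b=\|\Grad\Psi\|_{L^2}$ and chaining them. Your bookkeeping is sound: the combination $\gamma^2 a\leq K^2 b$ and $b\leq(K/|\omega|)^{2/3}a$ forces $\Psi=0$ whenever $|\omega|>K^4/\gamma^3$ (using that $\|\Grad\cdot\|_{L^2}$ and $\|\Laph\cdot\|_{L^2}$ are norms on $\Hd{2}$ by \eqref{eq:Hdnorms12}), and the elementary estimate $(x+y)^4\leq 4(x^2+y^2)^2$ shows your threshold is implied by \eqref{non-separated-omega} -- in fact your condition $|\omega|>K^4/\gamma^3$ with $K=C_{H_{\diamond}^1\to L^6}C_{H_{\diamond}^{1/2}\to L^3}\bigl(\|\Omega-\Oref\|_{H^1}+3\|\Omega\|_{H^1}\bigr)$ is marginally sharper than the paper's. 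The trade-off is that your route delivers bounded invertibility only through Fredholm theory and the open mapping theorem, without the quantitative inverse bound that the paper's coercivity argument provides and that is implicitly reused later (e.g.\ in the uniformity statements of Lemma \ref{lem:uniform-N}).
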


\begin{proof}
\emph{Part 1:} 
Applying Young's inequality $ab\leq \frac{2}{\gamma}a^2+\frac{\gamma}{8} b^2$ 
to the bounds \eqref{eq:Aalpha_bound} and \eqref{eq:Abeta_bound} 
on $|\mathbf{A}^{\alpha}|$ and $|\mathbf{A}^{\beta}|$ 
yields the following $\mathrm{G\mathring{a}rding}$-type inequality:
\begin{align*}
&\sqrt{2}|\mathbf{A}[\Psi, \Psi]|
\geq |\Re\{\mathbf{A}[\Psi, \Psi]\}|+|\Im\{\mathbf{A}[\Psi, \Psi]\}|\\[1ex]
 &\geq\gamma\|\Laph\Psi\|^2_{L^2}+|\omega| \|\Grad\Psi\|^2_{L^2}
 -|\mathbf{A}^{\alpha}[\Psi,\Psi]|-|\mathbf{A}^{\beta}[\Psi,\Psi]|\\
&\geq \frac{3\gamma}{4}\|\Psi\|^2_{H_{\diamond}^2}
+|\omega| \|\Psi\|^2_{H_{\diamond}^1}
- \frac{2}{\gamma}(C_{H_{\diamond}^1\to L^6}C_{H_{\diamond}^{1/2}\to L^3})^2\big(\|\Omega-\Oref\|^2_{H_{\diamond}^1}+ 9\|\Omega\|^2_{H_{\diamond}^1} \big)\|\Psi\|^2_{H_{\diamond}^{3/2}}. 
\end{align*}
Since the embeddings \hl{$ \Hd{2}\embed \Hd{1}$} and \hl{$ \Hd{2}\embed\Hd{3/2}$}  are compact, 
$\mathbf{B}$ is a Fredholm operator of index $0$; see, e.g., \cite[Sec.~8.2.4]{Renardy1992}.

\emph{Part 2:}  Applying the interpolation inequality $\|\Psi\|_{H_{\diamond}^{3/2}}^2\leq \|\Psi\|_{H_{\diamond}^1}\|\Psi\|_{H_{\diamond}^2}$ in the last inequality and 
then Young's inequality   $ab\leq \frac{1}{\gamma}a^2+\frac{\gamma}{4} b^2$ yields
\begin{align*}
&\sqrt{2}|\mathbf{A}[\Psi, \Psi]|\\&\quad\geq \frac{\gamma}{2}\|\Psi\|^2_{H_{\diamond}^2}
+\(|\omega| \!-\! \frac{4}{\gamma^{3}}(C_{H_{\diamond}^1\to L^6}C_{H_{\diamond}^{1/2}\to L^3})^4
\big(\|\Omega\!-\!\Oref\|^2_{H^1}\!+ \!9\|\Omega\|^2_{H^1} \big)^2 \)\|\Psi\|^2_{H_{\diamond}^1}.
\end{align*}
If $|\omega|$ satisfies \eqref{non-separated-omega}, then $\mathbf{A}$ is coercive, 
so $\mathbf{B}$ is boundedly invertible by the Lax-Milgram theorem \cite[Section 6.2, Theorem 1]{Evans}. 
\end{proof}

We now indicate the dependence of $\mathbf{B}$ on the frequency $\omega$ explicitely by $\mathbf{B}_{\omega}$ and also consider complex values of $\omega$. This allows us  
to apply analytic Fredholm theory and study resonances: 
\begin{corollary}\label{coro:analyticFredholm}
There exists a discrete set $\Lambda\subset \mathbb{C}$ without accumulation points such that $\mathbf{B}_{\omega}$ is boundedly invertible 
in $L(H_{\diamond}^2(\Mc),H_{\diamond}^{-2}(\Mc))$ for $\omega\in \mathbb{C}\setminus\Lambda$, and $\dim\operatorname{ker}\mathbf{B}_{\omega}\in \mathbb{N}$ for $\omega\in \Lambda$. 
\end{corollary}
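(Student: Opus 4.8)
The plan is to apply the analytic (meromorphic) Fredholm theorem to the operator-valued map $\omega\mapsto\mathbf{B}_{\omega}$. The key structural observation is that $\mathbf{B}_{\omega}$ depends \emph{affinely}, hence analytically — in fact as an entire function — on $\omega\in\C$: from \eqref{B} only the term $i\omega\Laph\Psi$ carries the frequency, so for any fixed reference value $\omega_0$ one has
\begin{equation*}
\mathbf{B}_{\omega}=\mathbf{B}_{\omega_0}+i(\omega-\omega_0)\Laph
\qquad\text{as operators }\Hd{2}\to\Hd{-2}.
\end{equation*}
Since $\Mc$ is compact, $\Laph$ maps $\Hd{2}$ boundedly into $\Hd{0}$ (note $\Laph\Psi$ again has zero mean), and the Rellich embedding $\Hd{0}\embed\Hd{-2}$ is compact; hence $\Laph:\Hd{2}\to\Hd{-2}$ is a compact operator and $\omega\mapsto\mathbf{B}_{\omega}$ is an analytic family of index-$0$ Fredholm operators consisting of a fixed operator plus an analytic compact perturbation.

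First I would fix a reference frequency $\omega_0$. By Part~2 of Theorem~\ref{theo:non-separate-eq}, the bound \eqref{non-separated-omega} furnishes a (real, large) $\omega_0$ at which $\mathbf{B}_{\omega_0}:\Hd{2}\to\Hd{-2}$ is boundedly invertible. Factoring out this invertible operator yields
\begin{equation*}
\mathbf{B}_{\omega}=\mathbf{B}_{\omega_0}\bigl(I-K(\omega)\bigr),
\qquad
K(\omega):=-\,i(\omega-\omega_0)\,\mathbf{B}_{\omega_0}^{-1}\Laph,
\end{equation*}
where $K(\omega):\Hd{2}\to\Hd{2}$ is compact, being the composition of the compact $\Laph:\Hd{2}\to\Hd{-2}$ with the bounded $\mathbf{B}_{\omega_0}^{-1}:\Hd{-2}\to\Hd{2}$, and depends analytically on $\omega$. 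Consequently $\mathbf{B}_{\omega}$ is boundedly invertible exactly when $I-K(\omega)$ is, and $\operatorname{ker}\mathbf{B}_{\omega}=\operatorname{ker}(I-K(\omega))$ because $\mathbf{B}_{\omega_0}$ is injective.

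The last step is to invoke the analytic Fredholm theorem on the connected domain $\C$ for the analytic family $I-K(\cdot)$ of identity-plus-compact operators. This gives the dichotomy that either $I-K(\omega)$ is singular for every $\omega\in\C$, or it is boundedly invertible for all $\omega$ outside a discrete set $\Lambda\subset\C$ without accumulation point, on which $\dim\operatorname{ker}(I-K(\omega))$ is finite and positive. The degenerate first alternative is ruled out because $I-K(\omega_0)=I$ is invertible (equivalently, $\mathbf{B}_{\omega_0}$ is invertible), so the second alternative must hold; transporting this back through the factorization yields the assertion, with $\dim\operatorname{ker}\mathbf{B}_{\omega}\in\N$ for $\omega\in\Lambda$. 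I expect the only points needing care to be the honest verification of compactness of the perturbation — i.e.\ routing $\Laph$ through the compact embedding $\Hd{0}\embed\Hd{-2}$ rather than through the merely bounded $\Laph:\Hd{2}\to\Hd{0}$ — and the exclusion of the degenerate alternative, which is precisely what Part~2 of Theorem~\ref{theo:non-separate-eq} supplies; the analyticity itself is immediate from the affine $\omega$-dependence.
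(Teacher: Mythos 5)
Your proof is correct, and it follows the same overall strategy as the paper --- reduce $\mathbf{B}_{\omega}$ to an analytic family of identity-plus-compact operators, apply the analytic Fredholm theorem, and rule out the degenerate alternative using the large-frequency invertibility from Part~2 of Theorem~\ref{theo:non-separate-eq} --- but the reduction itself is genuinely different. The paper sets $\mathbf{C}(\omega):=(I-\Laph)^{-2}\mathbf{B}_{\omega}-I$ on $H^2(\Mc)$ and needs compactness of the \emph{entire} lower-order part of $\mathbf{B}_{\omega}$, including the $\alpha_\Omega$- and $\beta_\Omega$-terms; this rests on the G\r{a}rding-type estimates of Part~1 of the proof of Theorem~\ref{theo:non-separate-eq}, i.e.\ on the $H^{3/2}$- and $H^1$-bounds from Lemma~\ref{lem:Aalpha} and \eqref{eq:Abeta_bound}, which in turn require the regularity hypothesis \eqref{eq:Omega_regularity} on $\Omega$. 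You instead factor out a single invertible $\mathbf{B}_{\omega_0}$ and observe that only the $\omega$-dependent term needs to be compact, which for $\Laph:\Hd{2}\to\Hd{0}\embed\Hd{-2}$ is elementary (isometric isomorphism composed with a Rellich embedding) and independent of any estimates on the rotation coefficients. Your route is therefore somewhat more economical and more robust for this particular corollary; the paper's route has the side benefit of recording that $\mathbf{B}_{\omega}$ is a compact perturbation of the coercive bi-Laplacian for \emph{every} $\omega$, which is what underlies the Fredholm statement in Part~1 of Theorem~\ref{theo:non-separate-eq}. Your attention to routing $\Laph$ through the compact embedding $\Hd{0}\embed\Hd{-2}$, to the zero-mean property of $\Laph\Psi$, and to the identification $\operatorname{ker}\mathbf{B}_{\omega}=\operatorname{ker}(I-K(\omega))$ via injectivity of $\mathbf{B}_{\omega_0}$ is exactly the care the argument needs; no gaps.
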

The elements of $\operatorname{ker}\mathbf{B}_{\omega}$ for $\omega\in\Lambda$ are called inertial modes and 
have been studied intensively in \cite{Damien22} and other publications. 
\begin{proof}[Proof of Corollary \ref{coro:analyticFredholm}]
It follows from part 1 in the proof of Theorem \ref{theo:non-separate-eq} that $\mathbf{C}(\omega):H^2(\Mc)\to H^2(\Mc)$  defined via
$\mathbf{C}(\omega):=(I-\Delta_{\mathrm{h}})^{-2}\mathbf{B}_{\omega}-I$ is a compact operator for all $\omega\in\mathbb{C}$. Moreover, $\omega\mapsto \mathbf{C}(\omega)$ is affinely linear and in particular holomorphic.

 Therefore, we are in the framework of the analytic Fredholm theorem
(see \cite[Thm.~7.92]{Renardy1992}). Note that $\mathbf{B}_{\omega}$ 
is boundedly invertible if and only of if $I+\mathbf{C}(\omega)$ is boundedly invertible and that the kernels 
of both operators coincide. The first alternative of this 
theory, that is $(I+\mathbf{C}(\omega))^{-1}$ does not exist for any $\omega\in\mathbb{C}$, is excluded by the second 
part of Theorem \ref{theo:non-separate-eq}. Therefore, the second alternative holds true and provides the 
statements of this corollary. 
\end{proof}

\subsection{Separated equations}\label{sec:wellposed-separate}

Let us summarize some consequences for the separated operators 
$\mathbf{B}_m: V_m\to V_m^* = H^{-2}_{\diamond}(\Mc;m)$ defined by 
\begin{align}\label{eq:B_separation}
\lsp \mathbf{B}_m\Psi_m,\psi_m\rsp =   \mathbf{A}_m[\Psi_m,\psi_m],\qquad \Psi_m,\psi_m\in V_m.
\end{align}

\begin{corollary}
For any parameters $\omega,\gamma,\Omega$ for which $\mathbf{B}$ is boundedly invertible, 
in particular for sufficiently large $|\omega|$ satisfying \eqref{non-separated-omega} as in Theorem \ref{theo:non-separate-eq}, 
the separated operators $\mathbf{B}_m:V_m\to V_m^*$ are boundedly invertible for all $m$. 
\end{corollary}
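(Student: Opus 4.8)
The plan is to exploit the block-diagonal structure of $\mathbf{A}$ recorded just after \eqref{eq:A_separation}, namely $\mathbf{A}(\Psi_m,\psi_n)=0$ whenever $m\neq n$. Since $\mathbf{A}[\Psi,\psi]=\langle\mathbf{B}\Psi,\psi\rangle_{\Hd{-2},\Hd{2}}$, this vanishing means $\langle\mathbf{B}\Psi_m,\psi_n\rangle=0$ for all $\psi_n\in V_n$ with $n\neq m$; equivalently, $\mathbf{B}$ maps $V_m$ into the annihilator $V_m^*=H^{-2}_{\diamond}(\Mc;m)$ of $\bigoplus_{n\neq m}V_n$ inside $\Hd{-2}$, and there it coincides with $\mathbf{B}_m$ by \eqref{eq:B_separation}. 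Together with the orthogonal decomposition $\Hd{2}=\bigoplus_{m}V_m$ and the induced dual decomposition $\Hd{-2}=\bigoplus_{m}V_m^*$, this exhibits $\mathbf{B}=\bigoplus_m \mathbf{B}_m$ as a block-diagonal operator with respect to matching orthogonal decompositions of domain and codomain. The only point requiring a little care is precisely this dual bookkeeping, i.e.\ the identification of each $V_m^*$ as an orthogonal summand of $\Hd{-2}$; everything else is purely formal.

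Granting the block structure, I would transfer invertibility block by block. For injectivity, suppose $\mathbf{B}_m\Psi_m=0$ for some $\Psi_m\in V_m$; regarded as an element of $\Hd{2}$, block-diagonality gives $\mathbf{B}\Psi_m=\mathbf{B}_m\Psi_m=0$, so injectivity of $\mathbf{B}$ forces $\Psi_m=0$. For surjectivity, fix $g\in V_m^*$ and view it inside $\Hd{-2}$; by bounded invertibility of $\mathbf{B}$ there is $\Psi=\sum_n\Psi_n\in\Hd{2}$ with $\mathbf{B}\Psi=g$. Projecting the identity $\sum_n\mathbf{B}_n\Psi_n=g$ onto each summand $V_n^*$ yields $\mathbf{B}_n\Psi_n=0$ for $n\neq m$ and $\mathbf{B}_m\Psi_m=g$, so $\Psi_m$ is the sought preimage. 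Hence every $\mathbf{B}_m$ is bijective.

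Finally, a uniform bound on the inverses comes for free from orthogonality: since $\|\Psi_m\|_{H^2_{\diamond}}\leq\|\Psi\|_{H^2_{\diamond}}$ and $\|g\|_{\Hd{-2}}=\|g\|_{V_m^*}$ for $g\in V_m^*$, one obtains $\|\mathbf{B}_m^{-1}\|\leq\|\mathbf{B}^{-1}\|$ for every $m$; alternatively, bounded invertibility of each $\mathbf{B}_m$ follows immediately from bijectivity via the open mapping theorem. The hypothesis is met whenever $\mathbf{B}$ is boundedly invertible, and in particular Theorem \ref{theo:non-separate-eq}(2) supplies this for all $|\omega|$ satisfying \eqref{non-separated-omega}. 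Since no nontrivial estimate is involved beyond those already established for $\mathbf{B}$, I expect the entire argument to be short; the main (and rather mild) obstacle is simply phrasing the orthogonal and dual decompositions cleanly enough that the block-diagonal splitting of $\mathbf{B}$ is unambiguous.
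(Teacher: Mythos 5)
Your proposal is correct, and it rests on exactly the same structural fact as the paper's proof, namely the block-diagonality \eqref{eq:A_separation} of $\mathbf{A}$ with respect to the decomposition $\Hd{2}=\bigoplus_m V_m$ and its dual decomposition of $\Hd{-2}$ from \eqref{eq:Hs_decomp}. The difference is only in how you pass from ``$\mathbf{B}$ block-diagonal and boundedly invertible'' to ``each block boundedly invertible'': the paper does this in one line by observing that the inf-sup constant of $\mathbf{A}_m$ on $V_m\times V_m$ dominates that of $\mathbf{A}$ on the full space (since restricting the supremum to $\psi_m$ with $\|\psi_m\|\leq\|\psi\|$ can only increase the quotient), which immediately gives $\|\mathbf{B}_m^{-1}\|\leq\|\mathbf{B}^{-1}\|$ via the Babu\v{s}ka--Ne\v{c}as characterization. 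You instead prove bijectivity of $\mathbf{B}_m$ by hand --- injectivity inherited from $\mathbf{B}$ because $\mathbf{B}\Psi_m=\mathbf{B}_m\Psi_m$ for $\Psi_m\in V_m$, surjectivity by extending $g\in V_m^*$ by zero, solving with $\mathbf{B}^{-1}$, and projecting onto the $m$-th dual summand --- and then recover the same bound $\|\mathbf{B}_m^{-1}\|\leq\|\mathbf{B}^{-1}\|$ from orthogonality of the components and the isometric identification of the zero-extension. Your route is longer but entirely self-contained (no appeal to inf-sup theory), and your explicit treatment of the dual decomposition is the one genuinely nontrivial bookkeeping step that the paper leaves implicit; both arguments deliver the identical quantitative conclusion.
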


\begin{proof}
It follows from \eqref{eq:A_separation} that $\|\mathbf{B}_m\|\leq \|\mathbf{B}\|$, so 
$\mathbf{B}_m$ is bounded. 
If $\mathbf{B}$ is boundedly invertible, then if follows from \eqref{eq:A_separation} 
and the inf-sup characterization of $\|\mathbf{B}_m^{-1}\|$ that 
$\|\mathbf{B}_m^{-1}\|\leq \|\mathbf{B}^{-1}\|$, 
and $\mathbf{B}_m$ is boundedly invertible as well. This completes the proof. 
\end{proof}

\begin{remark}
Repeating the argument of the proof of Theorem \ref{theo:non-separate-eq}, it can also be 
shown that the separated operators $\mathbf{B}_m$ are Fredholm of index $0$. 
Moreover, Corollary \ref{coro:analyticFredholm} holds true with $\mathbf{B}_{\omega}$ replaced 
by $\mathbf{B}_m=\mathbf{B}_{m,\omega}$. 
\end{remark}

\begin{proposition}[Uniqueness for small $\Omega'$]\label{prop:existence}
For any $m\in\Z$ the operator $\mathbf{B}_m:V_m\to V_m^*$ is boundedly invertible if the derivative $\Omega'$ (not necessarily differential rotation $\Omega$ itself) is small in comparison to the viscosity $\gamma$, in the sense that
\begin{align}\label{smallness}
\|\Omega'\|_{L^2(\Mc)}\frac{|m|}{r}C_{H^2_{\diamond}\to L^3_{\diamond}}C_{H^1_{\diamond}\to L^6_{\diamond}}<\gamma.
\end{align}
\end{proposition}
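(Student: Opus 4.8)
The plan is to prove coercivity of the separated sesquilinear form $\mathbf{A}_m$ on $V_m$ and then invoke the Lax--Milgram theorem, just as in Part~2 of Theorem~\ref{theo:non-separate-eq}; the novelty is that the smallness is demanded of $\Omega'$ alone and not of $\Omega$ itself, and the whole gain comes from looking only at the real part of $\mathbf{A}_m[\Psi,\Psi]$. First I would record that the leading term is coercive, $\gamma\lan\Lapm\Psi,\Lapm\Psi\ran=\gamma\|\Lapm\Psi\|_{L^2}^2=\gamma\|\Psi\|_{H^2_{\diamond}}^2$, while the viscous transport term $-i\omega\lan\Gradm\Psi,\Gradm\Psi\ran=-i\omega\|\Gradm\Psi\|_{L^2}^2$ is purely imaginary for real $\omega$ and therefore drops out of $\Re\mathbf{A}_m[\Psi,\Psi]$. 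Everything then reduces to showing that $\Re\bigl(\mathbf{A}^{\alpha}_m+\mathbf{A}^{\beta}_m\bigr)[\Psi,\Psi]$ is controlled by $\|\Omega'\|_{L^2}$.

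For the $\alpha$-term I would use the alternative representation \eqref{eq:alt_defi_Aalpha} of Lemma~\ref{lem:Aalpha}, $\mathbf{A}^{\alpha}_m[\Psi,\Psi]=\lan\tilde{\alpha}_{\Omega}\Grad\Psi,\vCurl\Psi\rsp$, which is also what keeps every integral meaningful under the reduced regularity $\Omega\in H^1(\Mc;m=0)$. Writing $\Grad\Psi=(a,b)$ and $\vCurl\Psi=(-b,a)$ in the orthonormal frame (with $a=\tfrac1r\partial_\theta\Psi$, $b=\tfrac{1}{r\sin\theta}\partial_\phi\Psi$) gives $\Grad\Psi\cdot\overline{\vCurl\Psi}=-a\bar b+b\bar a=2i\,\Im(b\bar a)$, which is purely imaginary pointwise. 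Since $\tilde{\alpha}_{\Omega}$ is real, the entire term $\mathbf{A}^{\alpha}_m[\Psi,\Psi]$ is purely imaginary and contributes nothing to $\Re\mathbf{A}_m[\Psi,\Psi]$. This is precisely the reason the second derivatives of $\Omega$ hidden in $\alpha_\Omega$ never enter the coercivity constant.

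The $\beta$-term is where $\Omega'$ is produced. Starting from $\mathbf{A}^{\beta}_m[\Psi,\Psi]=-im\lan\beta_\Omega\Lapm\Psi,\Psi\rsp$, I would apply Green's identity on $\Mc$ (using that $\beta_\Omega$ is real) to obtain $\lan\beta_\Omega\Lapm\Psi,\Psi\rsp=-\lan\beta_\Omega\Gradm\Psi,\Gradm\Psi\rsp-\lan(\Gradm\beta_\Omega)\cdot\Gradm\Psi,\Psi\rsp$. The first summand is real, so $-im$ times it remains purely imaginary (and joins the $\alpha$-contribution in the imaginary mass); only the second summand survives in the real part. Because $\beta_\Omega=\Omega-\Oref$ depends on $\theta$ alone, $\Gradm\beta_\Omega=\tfrac1r\Omega'\base{\theta}$ involves $\Omega'$ but not $\Omega$, whence $\bigl|\Re(\mathbf{A}^{\alpha}_m+\mathbf{A}^{\beta}_m)[\Psi,\Psi]\bigr|\le\tfrac{|m|}{r}\int_{\Mc}|\Omega'|\,|\Gradm\Psi|\,|\Psi|\,ds$. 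The generalized H\"older inequality \eqref{eq:Hoelder_triple} together with the embeddings $H^1_{\diamond}\embed L^6$ (applied to $\Gradm\Psi$) and $H^2_{\diamond}\embed L^3$ (applied to $\Psi$) bounds the right-hand side by $\tfrac{|m|}{r}\|\Omega'\|_{L^2}\,C_{H^2_{\diamond}\to L^3_{\diamond}}C_{H^1_{\diamond}\to L^6_{\diamond}}\,\|\Psi\|_{H^2_{\diamond}}^2$.

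Combining the three steps gives $\Re\mathbf{A}_m[\Psi,\Psi]\ge\bigl(\gamma-\tfrac{|m|}{r}\|\Omega'\|_{L^2}C_{H^2_{\diamond}\to L^3_{\diamond}}C_{H^1_{\diamond}\to L^6_{\diamond}}\bigr)\|\Psi\|_{H^2_{\diamond}}^2$, which is strictly positive exactly under the assumption \eqref{smallness}; coercivity then yields bounded invertibility of $\mathbf{B}_m$ via Lax--Milgram. The main obstacle I expect is the rigorous bookkeeping of the integration by parts under the weak hypothesis $\Omega\in H^1$: as in Lemma~\ref{lem:Aalpha}, both the Green identity and the splitting must first be carried out for smooth $\Psi_m\in V_m\cap C^{\infty}(\Mc)$ and then extended by density, which is legitimate precisely because the surviving cross term is continuous in the $H^2_{\diamond}\times H^2_{\diamond}$ topology --- this is what the triple H\"older estimate guarantees. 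A secondary point to check is that $\|\Gradm\Psi\|_{H^1_{\diamond}}$ is bounded by $\|\Psi\|_{H^2_{\diamond}}$ with the norms of the scale $H^s_{\diamond}(\Mc;m)$ fixed in the Appendix, so that the constants appearing in \eqref{smallness} are indeed the stated ones.
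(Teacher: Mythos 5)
Your proposal is correct and follows essentially the same route as the paper: coercivity of $\Re\mathbf{A}_m[\Psi,\Psi]$ via the observation that $\mathbf{A}^{\alpha}_m[\Psi,\Psi]$ and the $\langle\beta_\Omega\Gradm\Psi,\Gradm\Psi\rangle$ part of the integrated-by-parts $\beta$-term are purely imaginary, leaving only the cross term with $\Gradm\beta_\Omega=\tfrac1r\Omega'\base{\theta}$, which is estimated by the same $L^2\times L^6\times L^3$ H\"older argument and the embeddings $H^1_{\diamond}\embed L^6$, $H^2_{\diamond}\embed L^3$, followed by Lax--Milgram. The only quibble is the sign of your frame representation of $\vCurl\Psi$ (it should be $(b,-a)$ rather than $(-b,a)$ by \eqref{eq:diffops_on_sphere}), which does not affect the purely-imaginary conclusion.
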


\begin{proof}
Since $\mathbf{A}_m$ is bounded by Proposition \ref{prop:boundedness}, the Lax-Milgram lemma reduces the proof to verifying the coercivity of $\mathbf{A}$. To this end, we estimate its real part under the smallness condition \eqref{smallness}.
We will employ the (equivalent) norm  $\|u\|_{\Hdm{2}}:=\|\Delta u\|_{L^2(\Mc)}$ for  $u\in \Hdm{2}$. As $\Re \mathbf{A}^{\alpha}_m[\Psi,\Psi]=0$, one obtains
\begin{align*}
\Re\mathbf{A}_m[\Psi, \Psi]&=\gamma\|\Lapm\Psi\|^2_{L^2(\Mc)}
-\Re\big( im\big\langle \beta_\Omega\Lapm\Psi,\Psi \big\rangle\big).
\end{align*}
Using partial integration, the identity 
$\Gradm\beta_{\Omega}= \frac{1}{r}\Omega'\base{\theta}$,
H\"older's inequality \eqref{eq:Hoelder_triple} and the embeddings 
\eqref{eqs:separated_embeddings_sphere}, 
we can bound
\begin{align*}
&\Re  \big(im\big\langle \beta_\Omega\Lapm\Psi,\Psi \big\rangle\big)\\
&\quad\leq |m| \left|\Im \big\langle \Gradm \Psi,\Gradm (\beta_{\Omega}\Psi)\big\rangle\right|
=|m|\left|\Im \big\langle \Gradm\Psi,\Gradm(\beta_{\Omega})\Psi\big\rangle\right|\\
&\quad\leq |m| \|\Gradm \Psi\|_{L^6} \|\Gradm\beta_{\Omega}\|_{L^2} \|\Psi\|_{L^3}
\leq \frac{|m|}{r}C_{H^2_{\diamond}\to L^3_{\diamond}}C_{H^1_{\diamond}\to L^6_{\diamond}}\|\Omega'\|_{L^2(\Mc)}\|\Psi\|_{H^2_{\diamond}}^2.
\end{align*}
Therefore, $\Re\mathbf{A}_m[\Psi, \Psi]\geq c\|\Lapm\Psi\|^2_{L^2(\Mc)}$ 
with $c:=\gamma - \frac{|m|}{r}C_{H^2_{\diamond}\to L^3_{\diamond}}C_{H^1_{\diamond}\to L^6_{\diamond}}\|\Omega'\|_{L^2}>0$ ensured by the assumption \eqref{smallness}. This completes the proof of coercivity.
\end{proof}

\shorten{
\begin{remark}[Pointwise evaluation]
The embedding $(\Psi\in) \Hdm{2}\embed C(\Mc)$ enables the pointwise evaluation of the wave field $\Psi$ everywhere on $\Mc$. This property has been employed in Proposition \ref{prop:Fcov-adjoint}.
\end{remark}

\begin{remark}[Multi azimuth-frequency]
We  keep in mind that here, $\Psi$ is a single mode, i.e~$\Psi=\widehat{\Psi}_{\omega,m}$.
Thus, for inversion using multiple azimuths and/or frequencies, such as a Kaczmarz scheme \cite{HaltmeierLeitaoScherzer}, one should take into account \eqref{smallness}-\eqref{largeness} in a uniform sense.
\end{remark}

\begin{remark}[Helioseismology]
In helioseismology, the rotation rate is of order 400~nHz, the viscosity at the surface is around 100~km$^2$/s and the observed modes have longitudinal wave numbers $m$ smaller than 10 \cite{GizonInertial21}. Verifying the smallness hypothesis in Proposition~\ref{prop:existence} for real Sun applications is an interesting task.
\end{remark}
}

\subsection{Additional regularity}\label{sec:regularity}
We have established unique existence of wave solutions. However, we proceed further with proving higher regularity of the wave solutions, particularly for the separated equations. This regularity result is not merely a technical refinement; it is essential for convergence guarantee of iterative regularization methods later in Section \ref{sec:reg_converge}. \hl{To this end, we need to further require that $\Omega$ be in $H^2$, rather than just in $H^1$, as was assumed in \eqref{eq:Omega_regularity} and the subsequent well-posedness results.}

\begin{proposition}[Lifted regularity]\label{prop:regularity}
Let $m\in \Z$ and suppose  that \eqref{non-separated-omega} or \eqref{smallness} hold true such
that $\mathbf{B}_{m}:\Hdm{2}\to \Hdm{-2}$ is boundedly invertible. 
Further assume that source and  rotation have regularity  
\begin{align}\label{ass:regularity-coe}
f_m\in L^2(\Mc;m), \qquad \Omega\in H^2(\Mc;m=0).
\end{align}
Then the solution $\Psi_m:=\mathbf{B}_m^{-1}f_m$ 
as well as $\tilde{\Psi}_m:=(\mathbf{B}_m^\star)^{-1}f_m$ with 
the adjoint operator  
$\mathbf{B}_m^{\star}:\Hdm{2}\to \Hdm{-2}$
have higher regularity
\begin{align}\label{ass:regularity-state}
\Psi_m\in \Hdm{4}.
\end{align}
Moreover, with $\Psi_m(\theta,\phi) = \hat{\Psi}_m(\theta)e_m(\phi)$, 
the boundary values $\Gamma_m\hat{\Psi}_m$ are well defined 
for all $m\neq 0$ and 
$
\Gamma_m\hat{\Psi}_m=0,
$ 
and similarly for $\tilde{\Psi}_m$. \\
Finally, the restrictions of $\mathbf{B}_m$ and $\mathbf{B}_m^\star$ 
have the forms
\[
\left.\begin{array}{l}\mathbf{B}_m
=\gamma \Lapm^2
+i\omega\Lapm
-im\beta_\Omega\Lapm
+im\alpha_\Omega\\
\mathbf{B}_{m}^\star
=\gamma \Lapm^2
-i\omega\Lapm
+im\Lapm(\beta_\Omega\cdot)
-im\alpha_\Omega
\end{array}
\right\}:\Hdm{4}\to L^2_{\hl{\diamond}}(\Mc;m)
\]
and they have bounded inverses $\mathbf{B}_m^{-1},(\mathbf{B}_m^\star)^{-1}:L^2_{\hl{\diamond}}(\Mc;m)\to \Hdm{4}$.
\end{proposition}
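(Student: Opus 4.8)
The plan is to treat $\mathbf{B}_m$ as a lower-order perturbation of its principal part $\gamma\Lapm^2$ and to extract the extra two orders of regularity from the mapping properties of the bi-Laplacian on the sphere. Starting from the defining identity $\mathbf{A}_m[\Psi_m,\psi]=\langle f_m,\psi\rangle$ for all $\psi\in V_m$, I would integrate the term $-i\omega\langle\Gradm\Psi_m,\Gradm\psi\rangle$ by parts intrinsically on the closed manifold $\Mc$ (so no boundary contributions arise) and collect all non-principal contributions on the right, obtaining that $\Psi_m\in\Hdm{2}$ is a weak solution of $\gamma\Lapm^2\Psi_m=g_m$ with $g_m:=f_m-i\omega\Lapm\Psi_m+im\beta_\Omega\Lapm\Psi_m-im\alpha_\Omega\Psi_m$. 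Since $\Laph$ is diagonal in the associated-Legendre basis with eigenvalues that are strictly positive and bounded away from $0$ on the zero-mean spaces, $\Lapm^2:\Hdm{4}\to L^2(\Mc;m)$ is an isomorphism, and expanding $\Psi_m$ and $g_m$ mode by mode shows at once that a weak solution with $g_m\in L^2(\Mc;m)$ already lies in $\Hdm{4}$, together with a quantitative bound. The identical spectral argument applies to $\mathbf{B}_m^\star$, whose principal part is again $\gamma\Lapm^2$, giving $\tilde\Psi_m\in\Hdm{4}$.

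The crux is therefore to verify $g_m\in L^2(\Mc;m)$, and this is where the strengthened hypothesis $\Omega\in H^2(\Mc;m=0)$ enters. The terms $i\omega\Lapm\Psi_m$ and $im\beta_\Omega\Lapm\Psi_m$ are harmless: $\Lapm\Psi_m\in L^2$ because $\Psi_m\in\Hdm{2}$, and $\beta_\Omega=\Omega-\Oref\in L^\infty(\Mc)$ by the two-dimensional Sobolev embedding $H^2(\Mc)\embed C(\Mc)$. The delicate term is $\alpha_\Omega\Psi_m$: since $\Psi_m\in\Hdm{2}\embed L^\infty(\Mc)$, it suffices to show $\alpha_\Omega\in L^2(\Mc)$. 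Rewriting its definition \eqref{eq:defi_alpha} as $\alpha_\Omega=\Laph\Omega+\tfrac{2}{r^2}(\Omega'\cot\theta-\Omega)$, the first summand lies in $L^2$ because $\Omega\in H^2$, and $\Omega$ is bounded; the only genuinely singular contribution near the poles is $\Omega'\cot\theta$. The point I expect to be the main obstacle is that $\Omega'\cot\theta$ is, up to the metric normalization, the Christoffel contribution to the $\phi\phi$-component of the intrinsic Hessian $\nabla^2\Omega$, and hence is controlled in $L^2$ by $\|\Omega\|_{H^2(\Mc)}$ through the elliptic estimate $\|\nabla^2\Omega\|_{L^2}\lesssim\|\Omega\|_{L^2}+\|\Laph\Omega\|_{L^2}$. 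This is precisely why the $H^1$ regularity used for boundedness in Proposition~\ref{prop:boundedness} does not suffice here and why one must pass to the curvature-weighted $H^2$ control, which also explains the strengthened standing assumption on $\Omega$ in \eqref{ass:regularity-coe}.

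Once $\Psi_m\in\Hdm{4}$, I would pass to separated coordinates $\Psi_m(\theta,\phi)=\hat{\Psi}_m(\theta)e_m(\phi)$. For $m\neq0$ the well-definedness of $\Gamma_m\hat{\Psi}_m$ follows because membership in $H^4(\Mc)$ translates into $\hat{\Psi}_m$ lying in a weighted $H^4$-space on $(0,\pi)$ on which point evaluation of $\hat{\Psi}_m,\hat{\Psi}_m',\hat{\Psi}_m'',\hat{\Psi}_m'''$ at the poles is continuous. The vanishing $\Gamma_m\hat{\Psi}_m=0$ is then the coordinate translation of pole-regularity, i.e.\ exactly the boundary operators derived in Appendix \ref{appendix:boundary}; a clean way to see it is that the associated Legendre functions spanning $\Hdm{4}$ each satisfy the pertinent condition in \eqref{bc-operator}, and these conditions pass to the $H^4$-limit by continuity of the pole traces. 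The same reasoning applies verbatim to $\tilde\Psi_m$, and this step reconciles the coordinate-free equation \eqref{eq-vorticity-freq} with the separated boundary value problem \eqref{eq:separated}--\eqref{bc-operator}.

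It remains to identify the strong forms and the mapping properties. For $\Psi_m\in\Hdm{4}$ all four terms of $\mathbf{B}_m$ are genuine $L^2(\Mc;m)$ functions, and intrinsic integration by parts on the closed manifold converts the sesquilinear identity into $\langle\mathbf{B}_m\Psi_m,\psi\rangle=\langle\gamma\Lapm^2\Psi_m+i\omega\Lapm\Psi_m-im\beta_\Omega\Lapm\Psi_m+im\alpha_\Omega\Psi_m,\psi\rangle$, yielding the stated strong form of $\mathbf{B}_m$. The adjoint form is read off term by term from $\mathbf{A}_m^\star[\Psi,\psi]=\overline{\mathbf{A}_m[\psi,\Psi]}$, using that $\Lapm$ is self-adjoint and that $\alpha_\Omega,\beta_\Omega$ are real: the scalars $i\omega$ and $\pm im$ are conjugated and the multiplication $\beta_\Omega$ moves to the other side of $\Lapm$, producing $\gamma\Lapm^2-i\omega\Lapm+im\Lapm(\beta_\Omega\,\cdot\,)-im\alpha_\Omega$. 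Finally, each term is bounded from $\Hdm{4}$ into $L^2(\Mc;m)$, so $\mathbf{B}_m,\mathbf{B}_m^\star:\Hdm{4}\to L^2(\Mc;m)$ are bounded; they are injective as restrictions of the operators already shown invertible on $\Hdm{2}$, and surjective by the regularity just established, hence bijective, so the open mapping theorem supplies the bounded inverses $\mathbf{B}_m^{-1},(\mathbf{B}_m^\star)^{-1}:L^2(\Mc;m)\to\Hdm{4}$.
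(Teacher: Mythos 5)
Your proposal is correct and follows the same overall route as the paper: move the lower-order terms to the right-hand side, show the auxiliary source lies in $L^2(\Mc;m)$, invert $\gamma\Lapm^2$ using the isometry $\Laph:\Hd{t}\to\Hd{t-2}$, and read off the pole conditions from the $C^2$-regularity of an $H^4$-function on the sphere. The one step where you genuinely diverge is the only delicate one, the term $\alpha_\Omega\Psi_m$. The paper keeps the singular factor attached to the state: it writes $\alpha_\Omega=\tfrac{1}{r^2}(\Omega''+3\Omega'\cot\theta-2\Omega)$ and bounds the dangerous piece by $\|\Omega'\|_{L^4}\,\|\Psi_m/\sin\theta\|_{L^4}$, using that for $m\neq 0$ the quantity $\Psi_m/\sin\theta$ is (up to a constant) the $\phi$-component of $\Gradm\Psi_m$, hence controlled by $\|\Psi_m\|_{H^2}$ via $H^1\embed L^4$. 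You instead put the singularity on the coefficient, writing $\alpha_\Omega=\Laph\Omega+\tfrac{2}{r^2}(\Omega'\cot\theta-\Omega)$ and controlling $\Omega'\cot\theta$ in $L^2$ as the $\phi\phi$-component of the intrinsic Hessian via the Bochner identity $\int_{\Mc}|\nabla^2\Omega|^2=\int_{\Mc}(\Laph\Omega)^2-\tfrac{1}{r^2}\int_{\Mc}|\Grad\Omega|^2$, then use $\Psi_m\in H^2\embed L^\infty$. Both are valid and both explain why $H^2$-regularity of $\Omega$ is needed; your version has the small advantage of isolating a statement about $\alpha_\Omega$ alone ($\alpha_\Omega\in L^2$ whenever $\Omega\in H^2(\Mc;m=0)$), at the cost of invoking the Weitzenb\"ock machinery, while the paper's H\"older splitting is more elementary and uses only the embeddings already set up in the appendix. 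Two minor points: your remark that the pole traces of $\hat\Psi_m'''$ are continuous on $H^4(\Mc)$ overreaches ($H^4\embed C^2$, not $C^3$), but this is harmless since the proposition only asserts well-definedness of $\Gamma_m$ for $m\neq 0$, where at most second derivatives appear; and the phrase ``the identical spectral argument applies to $\mathbf{B}_m^\star$'' glosses over the fact that the adjoint carries the term $\Lapm(\beta_\Omega\,\cdot\,)$, for which one additionally needs the product estimate $\|\beta_\Omega\tilde\Psi_m\|_{H^2}\lesssim\|\beta_\Omega\|_{H^2}\|\tilde\Psi_m\|_{H^2}$ — the paper handles this by citing that $H^2(\Mc)$ is a Banach algebra.
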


\begin{proof}
The solution $\Psi_m\in \Hdm{2}$ satisfies 
\begin{align}\label{liftref-source}
\begin{aligned}
\forall \psi_m\in \Hdm{2}: \quad& \gamma\lsp \Lapm\Psi_m,\Lapm\psi_m\rsp
= \lsp \tilde{f}_m,\psi_m\rsp\\
& \mbox{with }
\tilde{f}_m := f_m + i\omega \Lapm \Psi_m - im \beta_{\Omega}\Lapm\Psi_m+im\alpha_{\Omega}\Psi_m.
\end{aligned}
\end{align}
We shall claim that $\tilde{f}_m\in L^2(\Mc;m)$. As $\alpha_{\Omega} = \Omega''\sin + 3 \Omega' \cot - 2\Omega\sin$, hence $\Omega\in H^2(\Mc;0)$ implies
\begin{align*}
\|\alpha_\Omega\Psi_m\|_{L^2}&\leq\| (\Omega''- 2\Omega\sin)\Psi_m \|_{L^2}+ \left\|3 \Omega' \cos \frac{\Psi_m}{\sin}\right\|_{L^2}\\
&\leq (\|\Omega''\|_{L^2}+2\|\Omega\|_{L^2})\|\Psi_m\|_{L^\infty} + 3\|\Omega'\|_{L^4}\|\Gradm\Psi_m\|_{L^{4}}
\end{align*}
with noting that $\Psi_m/\sin$ is $\phi$-component of $\Gradm\Psi_m$  as defined in \eqref{eq:Grad_decomp} for $m\neq 0$; 
for the case $m=0$, the term $\alpha_\Omega$ does not appear in the equation.
Using the embeddings \eqref{eqs:separated_embeddings_sphere} and setting 
$C_{\alpha}:= 2C_{H^2\to L^\infty}+3C^2_{H^1\to L^4}$, we obtain
\begin{align}\label{eq:alpha_bound}
\|\alpha_\Omega\Psi_m\|_{L^2}\leq C_{\alpha}\|\Omega\|_{H^2} \|\Psi_m\|_{H^2}.
\end{align}
With this we can derive the following norm bound:
\begin{align*}\label{liftref-source-bounded}
\|\tilde{f}_m\|_{L^2}
&\leq \|f_m\|_{L^2}+|\omega|\|\Psi_m\|_{H^2}+|m|\|\beta_{\Omega}\|_{L^\infty}\|\Psi_m\|_{H^2}+|m|C_\alpha\|\Omega\|_{H^2}\|\Psi_m\|_{H^2}\\
&\leq \|f_m\|_{L^2}+\big(|\omega|+|m|(C_{H^2\to L^\infty}\|\Omega\|_{H^2}+\|\Oref\|_{L^\infty}+C_{\alpha}\|\Omega\|_{H^2})
\big)\|\Psi_m\|_{H^2}. \nonumber
\end{align*}
It follows from \hl{the special elliptic regularity result} \eqref{eq:Laph_isometric_iso}, \hl{which implies that $\Laph^2:\Hd{s}\to \Hd{s-4}$ is bijective and isometric for $s=2,4$,} 
that weak solutions $\Psi\in \Hd{2}$ to 
the biharmonic equation $\Laph^2\Psi =f$ with $f\in L^2_{\hl{\diamond}}(\Mc)$ belong to $\Hd{4}$. Due to the separability of $\Laph$, weak solutions $\Psi_m\in\Hdm{2}$  to the separated equation $\Lapm^2\Psi_m = \tilde{f}_m$ with $\tilde{f}_m\in L^2_{\diamond}(\Mc)$ also belong to $\Hdm{4}$ with $\|\Psi_m\|_{H^4} = \|\tilde{f}_m\|_{L^2}$. 
Together with $\|\Psi_m\|_{H^2}\leq 
\|\mathbf{B}_m^{-1}\| \|f_m\|_{H^{-2}} \leq\|\mathbf{B}_m^{-1}\| \|f_m\|_{L^2}$ 
this shows  
that $\|\Psi_m\|_{H^4}\leq C\|f\|_{L^2}$, i.e., 
$\mathbf{B}_m:\Hdm{4}\to L^2(\Mc;m)$ is boundedly invertible. 
The proof of the analogous statement for $\mathbf{B}_m^\star$ 
only requires a different treatment of the term involving $\beta_{\Omega}$.  
Here, we use the fact that $H^2(\Mc)$ is a Banach algebra 
(see \cite[p.~115]{adams:75}  or \cite[Thm 1.4]{BadrBernicotRuss}) to show that 
$\|\beta_{\Omega}\Psi_m\|_{H^2}\leq C \|\beta_{\Omega}\|_{H^2}\|\Psi_m\|_{H^2}$. 

Concerning the boundary values, we use the continuous embedding $H^4(\Mc)\hookrightarrow C^{2}(\Mc)$ in \eqref{eq:embedding_Ck} and Lemma \ref{lemm:derivatives_at_poles} to 
show that $\Gamma_m\hat{\Psi}_m$ is well-defined and vanishes.  
\end{proof}

\subsection{Well-posedness on the restricted domain}\label{sec:wellposed-restricted}
Motivated by the practical measurement scenario in Section \ref{sec:data}, where observations are limited to a restricted latitude range $I'=(\epsilon,\pi-\epsilon)$, this section derives well-posedness and regularity of the wave operators on this restricted domain. These results are subsequently employed in Section \ref{sec:tcc-restricted} for convergence analysis of reconstruction methods under restricted observations.

We outline how an analysis analogous to that of Propositions \ref{prop:existence} and \ref{prop:regularity} can be conducted. The key elements are symmetry of the bi-Laplacian and Laplacian operators and strategic use of partial integrations. According to Remark \ref{rem:symmetry-restricted}, symmetry of Laplacian operators on function spaces supported on $I'$ is guaranteed with homogeneous Cauchy boundary data, i.e. $\Gamma_2$ in \eqref{bc-operator}. 
To this end, we define the following function spaces on $I'$ with 
norms inherited from $H^s(\Mc;m)$:
\begin{equation}\label{space-I}
\begin{split}
&H^s(I'; m):=\left\{ \Psi|_{I'}: \Psi \in H^s(\Mc;m) \right\}, s\geq 0, \\
&H^s_0(I'; m):=\left\{ \Psi|_{I'}: \Psi \in H^s(\Mc;m), \Psi|_{\partial I'}=\Psi'|_{\partial I'}=0 \right\}, s\geq 2.
\end{split}
\end{equation}
Note that these spaces coincide with the standard Sobolev spaces 
$H^s(I')$ and $H^s_0(I')$ with equivalent norms. Correspondingly, the dual spaces
are, respectively, given by $H^{-s}_0(I';m):=H^{s}(I';m)'=H^{-s}_0(I')$ and 
$H^{-s}(I';m):=H^{s}_0(I';m)'=H^{-s}(I')$, again with equivalent norms.

\begin{proposition}[Well-posedness on $I'$]\label{prop:wellposed-partial}
Given $\gamma>0$, and $\Omega\in H^1(\Mc; m=0)$ satisfying \eqref{non-separated-omega} or \eqref{smallness}.  Assume  for any $m\in\Z$ the Cauchy boundary condition
\begin{align}\label{partial-extra-info}
\Psi|_{\partial I'}=u_1\in\C^2\quad\text{and}\quad \Psi'|_{\partial I'}=u_2\in\C^2.
\end{align}
Let $u\in C^\infty(I)$ with $u|_{\partial I'}=u_1$, $u'|_{\partial I'}=u_2$ and $g:=\mathbf{B}_mu$. We claim:
\begin{enumerate}[label=(\roman*)]
\item
The operator $\mathbf{B}_m:H_0^2(I';m)\to H^{-2}(I';m)$ is boundedly invertible. The wave solution is $\Psi=\mathbf{B}_m^{-1}(f_m-g)+u\in H^2(I';m)$.
\item
Furthermore, if $\Omega\in H^2(\Mc; m=0)$ as in Proposition \ref{prop:regularity},  then the  linear bounded operator $\mathbf{B}_m$, $\mathbf{B}_m^\star$ have bounded inverses $\mathbf{B}_m^{-1},(\mathbf{B}_m^\star)^{-1}:L^2(I';m)\to H^4_0(I'; m)$. \label{prop:wellposed-partial-liftedreg}
\end{enumerate}
\end{proposition}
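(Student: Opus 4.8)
The plan is to transfer the well-posedness and lifted-regularity arguments already established on the full manifold $\Mc$ (Propositions \ref{prop:existence} and \ref{prop:regularity}) to the restricted interval $I'$, exploiting the stated fact that $H^s_0(I';m)$ and $H^s(I';m)$ coincide with the standard Sobolev spaces $H^s_0(I')$, $H^s(I')$ with equivalent norms, and that with homogeneous Cauchy data $\Gamma_2$ the Laplacian $\Lapm$ remains symmetric on $I'$ (Remark \ref{rem:symmetry-restricted}). The whole point of imposing $\Psi|_{\partial I'}=\Psi'|_{\partial I'}=0$ (i.e.\ the $\Gamma_2$ condition, uniformly in $m$) is to kill the boundary terms in the partial integrations that produced the sesquilinear form $\mathbf{A}_m$ in \eqref{eq:defi_Am}; once those vanish, every estimate in the proofs of Propositions \ref{prop:existence} and \ref{prop:regularity} goes through verbatim with $\Mc$ replaced by $I'$, since H\"older's inequality and the Sobolev embeddings hold on the bounded interval $I'$ as well (indeed the embedding constants on $I'$ are controlled by those on $\Mc$ via the norm equivalences).

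For part (i), I would first reduce the inhomogeneous Cauchy problem to a homogeneous one by the lifting already prepared in the statement: set $\Psi = w + u$ where $u\in C^\infty(I)$ realizes the boundary data $u_1,u_2$, so that the correction $w:=\Psi-u$ has vanishing Cauchy data and lies in $H^2_0(I';m)$. Then $w$ solves $\mathbf{B}_m w = f_m - \mathbf{B}_m u = f_m - g$ in the weak sense on $H^2_0(I';m)$. It remains to show $\mathbf{B}_m:H^2_0(I';m)\to H^{-2}(I';m)$ is boundedly invertible. Boundedness of $\mathbf{A}_m$ on $H^2_0(I';m)\times H^2_0(I';m)$ follows from Proposition \ref{prop:boundedness} restricted to $I'$ (the same H\"older and embedding bounds). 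For coercivity I would reprove the $\mathrm{G\mathring{a}rding}$/coercivity estimate: under \eqref{smallness} the computation in Proposition \ref{prop:existence} gives $\Re\mathbf{A}_m[w,w]\geq c\|\Lapm w\|_{L^2(I')}^2$, where the crucial partial integration $\Re(im\langle\beta_\Omega\Lapm w,w\rangle) = m\,\Im\langle\Gradm w,\Gradm(\beta_\Omega)w\rangle$ generates no boundary contribution precisely because $w$ and $w'$ vanish on $\partial I'$; similarly under \eqref{non-separated-omega} the full coercivity estimate from Theorem \ref{theo:non-separate-eq} carries over. Since $\|\Lapm\cdot\|_{L^2(I')}$ is an equivalent norm on $H^2_0(I';m)$, Lax–Milgram yields bounded invertibility, and then $\Psi = \mathbf{B}_m^{-1}(f_m-g)+u\in H^2(I';m)$.

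For part (ii) I would mimic the bootstrap of Proposition \ref{prop:regularity}. Writing the equation for $w=\Psi-u$ (or directly for the homogeneous problem with $f_m\in L^2(I';m)$) as $\gamma\Lapm^2 w = \tilde f_m$ with $\tilde f_m := f_m + i\omega\Lapm w - im\beta_\Omega\Lapm w + im\alpha_\Omega w$, the identical $L^2$-bounds \eqref{eq:alpha_bound} and the subsequent norm estimate show $\tilde f_m\in L^2(I';m)$ under $\Omega\in H^2(\Mc;m=0)$. The key elliptic-regularity input is that weak $H^2_0$-solutions of the biharmonic equation $\Lapm^2 w=\tilde f_m$ on $I'$ with the clamped (Cauchy) boundary condition $\Gamma_2$ belong to $H^4_0(I';m)$, with $\|w\|_{H^4(I')}\lesssim\|\tilde f_m\|_{L^2(I')}$ --- this is the one-dimensional fourth-order analogue of the isometry \eqref{eq:Laph_isometric_iso} used on $\Mc$, and it is exactly where the homogeneous $\Gamma_2$ data are needed to make $\Lapm^2$ invertible from $H^4_0(I';m)$ onto $L^2(I';m)$. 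Combining this with the already-established $\mathbf{B}_m^{-1}:L^2\to H^2_0$ bound gives $\mathbf{B}_m^{-1}:L^2(I';m)\to H^4_0(I';m)$ bounded; the adjoint statement for $\mathbf{B}_m^\star$ follows identically, using the Banach-algebra property of $H^2$ to handle $\beta_\Omega\cdot$ in the transposed $\beta$-term, just as in Proposition \ref{prop:regularity}.

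The main obstacle I anticipate is \emph{not} the sesquilinear-form estimates --- those are robust under restriction --- but rather verifying cleanly that the fourth-order elliptic regularity $\Lapm^2:H^4_0(I';m)\to L^2(I';m)$ is an isomorphism on the finite interval with the $\Gamma_2$ boundary conditions, and that this regularity is \emph{uniform in $m$} (the operator $\Lapm$ carries the $m$-dependent zeroth-order term $m^2/\sin^2\theta$ from \eqref{eq:Lap_decomp}, which is singular at the poles $0,\pi$ but bounded on the compactly contained $I'=(\epsilon,\pi-\epsilon)$ for $\epsilon>0$). For $\epsilon>0$ this potential is bounded and the interval is an ordinary bounded domain, so standard 1D ODE / Sobolev theory applies and the norm equivalences asserted after \eqref{space-I} do the bookkeeping; the delicate point worth stating explicitly is the boundary case $\epsilon=0$, where the weight degenerates and one must instead fall back on the genuine spherical estimates of Proposition \ref{prop:regularity}. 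I would therefore present part (ii) for $\epsilon>0$ via the interval theory and note that $\epsilon=0$ reduces to the already-proven manifold case.
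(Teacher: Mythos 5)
Your proposal is correct and follows essentially the same route as the paper: lift the inhomogeneous Cauchy data via $\Psi=w+u$, transfer boundedness and coercivity of $\mathbf{A}_m$ to $H^2_0(I';m)$ using the symmetry of $\Lapm$ under homogeneous Cauchy conditions (Remark \ref{rem:symmetry-restricted}), apply Lax--Milgram, and then bootstrap via the auxiliary source $\tilde f_m$ and biharmonic elliptic regularity for part (ii). Your explicit remarks on the $m$-uniformity of the clamped biharmonic isomorphism and the degenerate case $\epsilon=0$ are more careful than the paper's terse appeal to ``weak solutions are strong solutions'' and general elliptic regularity, but they do not change the argument.
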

\begin{proof} The splitting $\Psi=\mathbf{B}_m^{-1}(f_m-g)+u$ allows us to study the differential operators $\mathbf{B}_m$ on the state space $H^2_0(I'; m)$ with homogeneous boundaries as defined in \eqref{space-I}.
Symmetry of the  bi-Laplacian and Laplacian operators with homogeneous Cauchy boundary condition \eqref{partial-extra-info} shown in Remark \ref{eq:defi_A} enables the sequilinear form $\mathbf{A}$ in \eqref{eq:defi_A}. Next, the integration-by-part \eqref{Aalpha} in Lemma \ref{lem:Aalpha} holds under Cauchy boundary condition, implying  boundedness of $\mathbf{A}$, $\mathbf{A}^\alpha$ as in Proposition \ref{prop:boundedness}, and well-posedness of the unseparated equation in the same setting as Theorem \ref{theo:non-separate-eq}. Then, for the separated equations, following the proof of Proposition \ref{prop:existence}, in which the partial integration also holds on the subdomain $I'$, we obtain the same uniqueness result.

Concerning the lifted regularity result as in Proposition \ref{prop:regularity}, by setting the auxiliary source $\tilde{f}_m$ as in \eqref{liftref-source} and partial integration, we (formally) have
$\gamma\lsp \Lapm^2\Psi_m,\psi_m\rsp=\gamma\lsp \Lapm\Psi_m,\Lapm\psi_m\rsp
= \lsp \tilde{f}_m,\psi_m\rsp$ for all $\psi_m\in H^2_0(I'; m)$. 
Using the fact the weak solutions of the bi-Laplace equations are also 
strong solutions, one obtains boundedness of the fourth order term $\|\Delta_m^2\Psi_m\|_{L^2}$, given higher regularity of $\Omega$ and $f$ as in Proposition \ref{prop:regularity}. Alternatively, one can apply general elliptic regularity results as, e.g., in \cite[Sec.~5.3.4]{Triebel}.
\end{proof}

\section{Inverse problem for viscosity and rotation }\label{sec:ip}
With the forward problems established, we now address the inverse problem: recovering the differential rotation function $\Omega$ and the viscosity parameter $\gamma$ in the separated equations under various data measurement strategies.

\subsection{Inversion with different observation strategies}\label{sec:ip-formulation}
As described in Section \ref{sec:data}, data is acquired under two scenarios: full or partial measurements of the wave solution. The measurement operator is hence defined as
\begin{align}\label{Llinear}
\begin{array}{l}
L: V_m\to Y \\
(L\Psi)(\theta):=\Psi(\theta)
\end{array}
\qquad Y=
\begin{cases} L^2(I,r^2\sin),& \text{full measurements}\\
L^2(I',r^2\sin),&\text{partial measurements}
\end{cases}
\end{align}
where $Y$ are weighted $L^2$-data spaces.
Within the Hilbert space framework established by the well-posedness results in Sections \ref{sec:wellposed}–\ref{sec:wellposed-separate}, that is
\begin{align}\label{separate-eq}
\gamma\in \R^+, \quad \Omega\in X:= H^1(\Mc,m=0), \quad \Psi\in V_m:=\Hdm{2},\quad y\in Y,
\end{align}
we introduce the parameter-to-state map
\begin{align}\label{S}
\begin{aligned}
&S:\mathcal{D}(S) \to V_m,\quad \Dc(S):=\{(\gamma,\Omega)\in\R^+\cap X:   \eqref{non-separated-omega} \vee \eqref{smallness}\}\\
& S(\gamma,\Omega):=\Psi,\quad\text{where $\Psi$ is the weak solution to \eqref{eq:separated}.}
\end{aligned}
\end{align}
Together with the measurement operator $L$ in \eqref{Llinear},
the forward operator $F$ for the parameter identification is formulated as
\begin{align}\label{Flinear}
F:\Dc(S)\to Y \qquad F:=L\circ S.
\end{align} 
Note that $X$ is the space of real-valued functions, while $V_m$ and $Y$ are spaces of complex-valued functions; together, they form Gelfand triples. Compactness of the embedding $V_m \hookrightarrow Y$ yields compactness of the measurement operator $L$, thus of the forward operator $F$. In such cases, the inverse parameter problem becomes ill-posed, necessitating regularization to ensure stable reconstruction of the target coefficients. 

\subsection{Sensitivity and adjoints}\label{sec:adjoint}
Given the inherent nonlinearity of the inverse problem, iterative regularization methods such as Landweber-type and Newton-type algorithms are typically employed (see Section \ref{sec:reg_converge}).
For noisy data $y^\delta \in Y$, the Landweber iteration is given as
\[p_{k+1}=p_k-F'[p_k]^*(F(p_k)-y^\delta) \qquad k\leq K(y^\delta,\delta),\]
for the unknown parameter $p := (\gamma, \Omega)$. The stopping index $K(y^\delta, \delta)$ serves as the \emph{regularization parameter} and is determined according to the \emph{discrepancy principle}; see Section \ref{sec:numerics} for further details. This formulation highlights the main components of  gradient-based regularization schemes: sensitivity analysis and adjoint derivation; these are the main focus of this section.

For clarity, we will explicitly denote the dependence of $\Bf_m$ on the unknown parameters $p=(\gamma,\Omega)\in \mathcal{D}(S)$, writing
$
\Bbone{p}: V_m\to V_m^*.
$
As the mapping $p\mapsto \Bbone{p}$ is affine linear, the derivative is independent of $p$; we thus suppress this argument in the derivative and write
\[
\Bb':\mathbb{R}\times X\to \mathcal{L}(V_m, V_m^*), \qquad  
\Bbptwo{\delta\gamma,\delta\Omega}{\Psi}:= 
(\delta\gamma) \Lapm^2\Psi + im\alpha_{\delta\Omega} \Psi - im (\delta\Omega)\hl{\Lapm \Psi}.
\]
More precisely, $\Bb'(\delta\gamma,\delta\Omega)$ is the operator 
induced by the sesquilinear form $\mathbf{A}'_m(\delta\gamma,\delta\Omega):
V_m\times V_m\to \mathbb{C}$ defined by 
\[
\mathbf{A}'_m(\delta\gamma,\delta\Omega)[\Psi,\psi]
:=(\delta\gamma)\lsp \Lapm\Psi,\Lapm\psi \rsp
+im\alpha_{\delta\Omega}\langle\Psi,\psi\rangle - im\delta\Omega\langle\hl{\Lapm\Psi},\psi\rangle.
\]
We wish to emphasize that a similar analysis can be carried out for the unseparated equation with obvious modifications.

\begin{lemma}[Sensitivity] \label{lem:sensitivity}
The parameter-to-state map $S$ in \eqref{S} is Fr\'echet differentiable, and for any 
$p\in\mathcal{D}(S)$ and $\delta p\in \R \times X$ we have
\begin{align}\label{sen-eq}
S'[p]\delta p = -\Bbone{p}^{-1}\Bbp{\delta p}\Psi \in V_m,
\end{align}
where $\Psi:=\Bbone{p}^{-1}f$ is the weak solution to the primal equation \eqref{eq:separated}.
\end{lemma}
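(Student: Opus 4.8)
The plan is to view $S(p)=\Bbone{p}^{-1}f$ as inversion composed with the affine map $p\mapsto\Bbone{p}$ and to differentiate through the resolvent identity $A^{-1}-B^{-1}=-A^{-1}(A-B)B^{-1}$. Because $p\mapsto\Bbone{p}$ is affinely linear (already recorded above: $\gamma$ enters linearly via $\gamma\Lapm^2$, $\Omega$ enters linearly through $\alpha_\Omega$ and $\beta_\Omega$, and the $i\omega\Lapm$ summand is constant), we have the \emph{exact} first-order expansion $\Bbone{p+\delta p}=\Bbone{p}+\Bbptwo{\delta p}{}$ with no remainder in $\delta p$, where $\Bb'$ is the constant derivative introduced before the lemma. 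The candidate derivative is therefore $-\Bbone{p}^{-1}\Bbptwo{\delta p}{\Psi}$, the formal value of $\frac{d}{dt}\big|_{t=0}(\Bbone{p}+t\,\Bbptwo{\delta p}{})^{-1}f$.

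First I would record two preliminaries. The domain $\Dc(S)$ is open, since conditions \eqref{non-separated-omega} and \eqref{smallness} are strict inequalities between quantities depending continuously on $(\gamma,\Omega)\in\R^+\times H^1$; hence $p+\delta p\in\Dc(S)$ and $S(p+\delta p)$ is well defined for $\|\delta p\|$ small. Next I would bound the derivative operator: from its explicit form and the estimates behind Lemma \ref{lem:Aalpha} and Proposition \ref{prop:boundedness}, one has $\|\Bbptwo{\delta\gamma,\delta\Omega}{}\|_{\mathcal{L}(V_m,V_m^*)}\le C(|\delta\gamma|+\|\delta\Omega\|_{H^1})$, the $\delta\gamma$-term controlled by $\|\Lapm\,\cdot\,\|_{L^2}^2$ and the $\delta\Omega$-terms by the same Sobolev embeddings used there. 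A Neumann series then gives uniform invertibility of the perturbation: writing $\Bbone{p+\delta p}=\Bbone{p}\big(I+\Bbone{p}^{-1}\Bbptwo{\delta p}{}\big)$, once $\|\Bbone{p}^{-1}\|\,\|\Bbptwo{\delta p}{}\|\le\tfrac12$ we obtain $\|\Bbone{p+\delta p}^{-1}\|\le 2\|\Bbone{p}^{-1}\|$ uniformly for small $\delta p$.

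Then I would close the argument with the resolvent identity. From $\Bbone{p+\delta p}-\Bbone{p}=\Bbptwo{\delta p}{}$ one gets the exact equality $S(p+\delta p)-S(p)=-\Bbone{p+\delta p}^{-1}\Bbptwo{\delta p}{\Psi}$ (using $\Bbone{p}^{-1}f=\Psi$). Subtracting the candidate derivative and applying the identity a second time yields the remainder $S(p+\delta p)-S(p)-S'[p]\delta p=\Bbone{p+\delta p}^{-1}\Bbptwo{\delta p}{}\,\Bbone{p}^{-1}\Bbptwo{\delta p}{\Psi}$, which by the uniform bound and the two factors of $\Bb'$ satisfies $\|\cdot\|_{V_m}\le 2\|\Bbone{p}^{-1}\|^2 C^2(|\delta\gamma|+\|\delta\Omega\|_{H^1})^2\|f\|_{V_m^*}=O(\|\delta p\|^2)$. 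This is precisely Fr\'echet differentiability with the asserted derivative, and linearity and boundedness of $\delta p\mapsto S'[p]\delta p$ are immediate from those of $\Bb'$.

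The computation is structurally routine; the only genuine care lies in the operator-norm bound on $\Bb'$ under the \emph{reduced} regularity $\Omega\in H^1$, where the $\alpha$-contribution must be handled through the integration-by-parts identity \eqref{eq:alt_defi_Aalpha} of Lemma \ref{lem:Aalpha} rather than by naive differentiation. Once that estimate and the uniform invertibility are in place, the quadratic remainder follows mechanically, so I do not anticipate a substantive obstacle beyond tracking the embedding constants.
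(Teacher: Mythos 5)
Your proof is correct, but it takes a genuinely different route from the paper's. The paper disposes of Lemma \ref{lem:sensitivity} in one step by invoking the differentiability statement of the implicit function theorem for $G(p,\Psi):=\Bbtwo{p}{\Psi}-f$, exploiting linearity of $G$ in $\Psi$ and bounded invertibility of $\partial_\Psi G=\Bbone{p}$, so that \eqref{sen-eq} drops out as $-(\partial_\Psi G)^{-1}\partial_p G\,\delta p$. You instead verify Fr\'echet differentiability by hand: exactness of the affine expansion $\Bbone{p+\delta p}=\Bbone{p}+\Bbp{\delta p}$, an operator-norm bound $\|\Bbp{\delta p}\|_{\mathcal{L}(V_m,V_m^*)}\leq C\|\delta p\|_{\R\times X}$ (correctly routed through the integration-by-parts identity \eqref{eq:alt_defi_Aalpha} of Lemma \ref{lem:Aalpha} for the $\alpha$-contribution, which is the one place a naive estimate would fail under the reduced regularity $\Omega\in H^1$), a Neumann-series argument for uniform invertibility near $p$, and two applications of the resolvent identity to exhibit an explicitly quadratic remainder. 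Both arguments are valid; the paper's is shorter, while yours is self-contained, makes the openness of $\mathcal{D}(S)$ explicit, and reproduces almost verbatim the double-resolvent-identity computation that the paper only carries out later in the proof of the tangential cone condition (Theorem \ref{theo:tcc-full}), so nothing is wasted. The only blemish is cosmetic: after replacing $\|\Psi\|_{V_m}$ by $\|\Bbone{p}^{-1}\|\,\|f\|_{V_m^*}$ your remainder bound should carry $\|\Bbone{p}^{-1}\|^{3}$ rather than $\|\Bbone{p}^{-1}\|^{2}$, which does not affect the $O(\|\delta p\|^{2})$ conclusion.
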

\begin{proof}
The result follows from the differentiability statement in the implicit function theorem 
(see, e.g., \cite[\S 4.7]{zeidler1986}) applied to the operator $G:\mathcal{D}(S)\times V_m\to V_m^*$, 
$G(p,\Psi) := \Bbtwo{p}{\Psi}-f$. Here, one
interprets $S$ as the implicitly defined function $G(p,S(p))=0$ and employs linearity of $G$ in the second argument as well as bounded invertibility of $\Bbone{p}:V_m\to V_m^*$.
\end{proof}

Differentiability of $F$ is straightforward from that  of $S$  and boundedness of the linear measurement operator $L$. We now derive the adjoint for different observations.

\begin{proposition}[Adjoint -- full observation]\label{prop:Flinear-adjoint}
Let $L$ be the full measurement operator in \eqref{Llinear}. Denote by 
$I_X:X^*\to X$ the Riesz isomorphism between dual spaces.\\
The Hilbert space adjoint of the derivative of $F$ in \eqref{Flinear} is given by
\begin{align}\label{FLinear-adjoint}
F'[p]^*: Y\to \R\times X \qquad F'[p]^*y= -\left[\begin{smallmatrix}1 &0\\0&I_X\end{smallmatrix}\right]
\,\re{[\Bbp{\cdot}\Psi]^\star
(\Bbone{p}^{-1})^\star y}
\end{align}
where $\Psi:=\Bbone{p}^{-1}f$ is again the solution to the separated equation \eqref{eq:separated} and 
\begin{align}\label{Banach-adj}
[\Bbp{\cdot}\Psi]^\star: V_m\to \R\times X^*, \quad (\Bbone{p}^{-1})^\star: V_m^*\to V_m
\end{align}
denotes the Banach space adjoints of $\Bbp{\cdot}\Psi:\R\times X\to V_m^*$ and 
$\Bbone{p}^{-1}: V_m^*\to V_m$.
\end{proposition}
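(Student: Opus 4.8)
The plan is to verify the defining identity of the Hilbert-space adjoint by transporting the two operator factors of $F'[p]$ (and the sign) across the relevant duality pairings, one at a time, using Banach-space adjoints, and only at the very end converting back to the Hilbert-space setting via Riesz isomorphisms and a real part. Since $L=\mathrm{Id}$ we have $F=S$, so by Lemma~\ref{lem:sensitivity} the derivative factorises as
\[
F'[p] = -\,\Bbone{p}^{-1}\circ\big(\Bbp{\cdot}\Psi\big):\ \R\times X \xrightarrow{\ \Bbp{\cdot}\Psi\ } V_m^* \xrightarrow{\ \Bbone{p}^{-1}\ } V_m,
\]
with $\Psi=\Bbone{p}^{-1}f$. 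Because the parameter space $\R\times X$ is \emph{real} while $V_m$ and $Y$ are \emph{complex}, the adjoint $F'[p]^*\colon Y\to\R\times X$ is characterised through the real bilinear identity
\[
\lsp \delta p,\, F'[p]^*y\rsp_{\R\times X} = \re{\lsp F'[p]\,\delta p,\, y\rsp_{Y}}, \qquad \delta p\in\R\times X,\ y\in Y,
\]
and the real part appearing here is exactly what will yield the $\re{\,\cdot\,}$ in the asserted formula.

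The central structural device I would exploit is the Gelfand-triple inclusion $V_m\embed Y\embed V_m^*$: it allows me to regard the datum $y\in Y=L^2(I,r^2\sin)$ as an element of $V_m^*$, so that the $Y$-inner product is the restriction of the duality pairing $\lsp\cdot,\cdot\rsp_{V_m^*,V_m}$ and no separate Riesz map on the state side is required. With this identification I would first move $\Bbone{p}^{-1}$ across the pairing, replacing it by its Banach adjoint $(\Bbone{p}^{-1})^\star\colon V_m^*\to V_m$ (a bounded map into $V_m$ by reflexivity of $V_m$ and bounded invertibility of $\Bbone{p}$), producing the intermediate element $(\Bbone{p}^{-1})^\star y\in V_m$. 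I would then transpose the affine-linear factor $\Bbp{\cdot}\Psi$, replacing it by its Banach adjoint $[\Bbp{\cdot}\Psi]^\star\colon V_m\to\R\times X^*$. After these two steps the original pairing has been rewritten as a duality pairing between $\R\times X^*$ and $\R\times X$ whose first argument is precisely $[\Bbp{\cdot}\Psi]^\star(\Bbone{p}^{-1})^\star y$.

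It then remains to descend from this duality pairing to the Hilbert inner product on the real space $\R\times X$. On the scalar factor $\R$ the Riesz map is the identity, while on $X$ it is the isomorphism $I_X\colon X^*\to X$; together they assemble into the block-diagonal operator $\left[\begin{smallmatrix}1&0\\0&I_X\end{smallmatrix}\right]$. Since $\Bbp{\cdot}\Psi$ is only $\R$-linear yet takes values in the complex space $V_m^*$, its Banach adjoint delivers \emph{complex}-valued functionals on $\R\times X$; taking the real part projects these onto the real dual $(\R\times X)^*$ and reconciles them with the real parameter inner product. This is consistent with the placement of the factors because $I_X$ is $\R$-linear and hence commutes with $\re{\,\cdot\,}$. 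Collecting the minus sign from Lemma~\ref{lem:sensitivity} produces the claimed identity \eqref{FLinear-adjoint}.

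I expect the main obstacle to be bookkeeping rather than substance: one must keep the three distinct dualities ($V_m$--$V_m^*$, the role of $Y$ as the Gelfand middle space, and $\R\times X$--$\R\times X^*$) and their conjugation conventions mutually consistent, so that the real part and the Riesz isomorphism $I_X$ attach to exactly the factors shown and no spurious conjugation survives. Verifying that the Banach adjoint of the real-linear map $\Bbp{\cdot}\Psi$ is well defined with values in $\R\times X^*$, and that $\re{\,\cdot\,}$ is indeed the correct mechanism for passing from the complex pairing to the real parameter inner product, is the only genuinely delicate point; the remaining manipulations are routine transpositions across bounded pairings.
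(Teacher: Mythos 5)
Your proposal is correct and follows essentially the same route as the paper's proof: factor $F'[p]$ via Lemma \ref{lem:sensitivity}, use the Gelfand-triple identification of $y\in Y$ with an element of $V_m^*$ to transpose $\Bbone{p}^{-1}$ and then $\Bbp{\cdot}\Psi$ across the duality pairings via their Banach adjoints, and finally pass to the real parameter inner product by taking the real part and applying the block-diagonal Riesz isomorphism. The only difference is that you make the Gelfand-triple mechanism and the real-versus-complex linearity bookkeeping more explicit than the paper does.
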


\begin{proof}
Since the parameter space is real, we impose on $Y$ the real-valued inner product, the real part of the canonical complex-valued inner product, as
\begin{align*}
(f,g)_Y:=(\re{f},\re{g})_Y+(\im{f},\im{g})_Y=\re{(f,g)_Y^\C}.
\end{align*}
The 
linearization \eqref{FLinear-adjoint} and Banach space adjoints \eqref{Banach-adj} enable us to deduce
\begin{align*}
&\Big(F'[p]\delta p\,,\,y\Big)_Y^\C  = \lan S'[p]\delta p\,,y\ran_{L^2}^\C = -\lan\Bbone{p}^{-1}\Bbp{\delta p}{\Psi}\,,y\ran_{L^2}^\C \\
&= -\lan \Bbptwo{\delta p}{\Psi}\,,\, (\Bbone{p}^{-1})^\star y\ran_{V_m^*,V_m}^\C  = -\lan \delta p\,,\, [\Bbptwo{\cdot}{\Psi}]^\star(\Bbone{p}^{-1})^\star y\ran_{\R\times X, \R\times X^*}^\C. \nonumber
\end{align*}
Then with the real-valued inner product and the Riesz isomorphism $I_X$, we arrive at  
\begin{align*}
\Big(F'[p]\delta p\,,\,y\Big)_Y
&= \lan \delta p\,,\, -\re{[\Bbptwo{\cdot}{\Psi}]^\star(\Bbone{p}^{-1})^\star y} \ran_{\R\times X, \R\times X^*}\\ 
&= \Big( \delta p\,,\, -\left[\begin{smallmatrix}1 &0\\0&I_X\end{smallmatrix}\right]\re{[\Bbptwo{\cdot}{\Psi}]^\star(\Bbone{p}^{-1})^\star y}\Big)_{\R\times X}, 
\end{align*}
in agreement with the expression \eqref{FLinear-adjoint}. 
\end{proof}

We next derive the explicit expression for the Hilbert space adjoint \eqref{FLinear-adjoint}.
\begin{corollary}[Explicit form]\label{cor:adj_explicit}
The adjoint for full observation in Proposition \ref{prop:Flinear-adjoint} takes the explicit form
\begin{equation}\label{FLinear-adjoint_explicit}
\begin{split}
&F'[\gamma,\Omega]^*y=\begin{bmatrix} \re{\int_0^\pi(\Lapm^2\Psi) \overline{z}r^2\sin(\cdot)\,ds}\\[1ex]
 mI_X \im{\dfrac{\sin(\cdot)}{r^2}\dfrac{d}{d\theta}\(\dfrac{1}{\sin(\cdot)}\dfrac{d (\overline{\Psi} z )}{d\theta} \) - \(\Lapm\overline{\Psi}\)z}
\end{bmatrix}
\end{split}
\end{equation}
with the adjoint state $z:=(\Bbone{p}^{-1})^\star y$. 
\end{corollary}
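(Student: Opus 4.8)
The plan is to start from the abstract adjoint formula \eqref{FLinear-adjoint} established in Proposition \ref{prop:Flinear-adjoint} and make each of its three ingredients explicit: the Banach adjoint $(\Bbone{p}^{-1})^\star$, the Banach adjoint $[\Bbp{\cdot}\Psi]^\star$ of the linearized-operator action, and the real-part/Riesz postprocessing. The key observation is that $(\Bbone{p}^{-1})^\star y = z$ is precisely the adjoint state, so it suffices to compute $[\Bbp{\cdot}\Psi]^\star z$ componentwise, against variations $\delta p = (\delta\gamma,\delta\Omega)$, and then take the real part and apply $\bigl[\begin{smallmatrix}1 &0\\0&I_X\end{smallmatrix}\bigr]$.

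First I would recall from Section \ref{sec:adjoint} the explicit form of the linearized forward operator, namely the sesquilinear form $\mathbf{A}'_m(\delta\gamma,\delta\Omega)[\Psi,\psi] = (\delta\gamma)\lsp \Lapm\Psi,\Lapm\psi \rsp + im(\alpha_{\delta\Omega}-\delta\Omega)\langle\Psi,\psi\rangle$, which induces $\Bbptwo{\delta\gamma,\delta\Omega}{\Psi}$. By definition of the Banach adjoint, $\lan \delta p, [\Bbp{\cdot}\Psi]^\star z\ran = \lan \Bbp{\delta p}\Psi, z\ran_{V_m^*,V_m}^{\C} = \mathbf{A}'_m(\delta\gamma,\delta\Omega)[\Psi,z]$. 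I would then separate this pairing into the $\delta\gamma$-component and the $\delta\Omega$-component. The $\delta\gamma$-term reads $(\delta\gamma)\lsp \Lapm\Psi,\Lapm z\rsp$; integrating by parts (using symmetry of the Laplacian from Lemma \ref{lem:symmetry}) converts this to $(\delta\gamma)\int (\Lapm^2\Psi)\,\overline{z}\,r^2\sin\,ds$, identifying the first entry of the gradient after taking the real part.

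The $\delta\Omega$-component is the more delicate one. Here the contribution is $im\lan(\alpha_{\delta\Omega}-\delta\Omega)\Psi, z\rsp$, and since $\delta\Omega \mapsto \alpha_{\delta\Omega}$ is the linear differential expression $\alpha_{\delta\Omega} = \frac{1}{r^2\sin\theta}\frac{d}{d\theta}\bigl(\frac{1}{\sin\theta}\frac{d}{d\theta}(\delta\Omega\,\sin^2\theta)\bigr)$, I would move these $\theta$-derivatives off $\delta\Omega$ and onto the product $\overline{\Psi}z$ by repeated integration by parts. The boundary terms vanish because of the established regularity $\Psi,z\in\Hdm{4}$ and the homogeneous boundary values $\Gamma_m\hat\Psi_m=0$ from Proposition \ref{prop:regularity}; this is the main technical obstacle, as one must verify that each partial integration in $\theta$ (and the weight $r^2\sin$ coming from the inner product) produces no leftover boundary contribution at the poles $\theta\in\{0,\pi\}$, including a careful treatment of the $1/\sin\theta$ singularities. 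After reorganizing, the factor $im$ combined with the real-valued inner product on the real parameter space turns the pairing into an imaginary part, yielding the expression $m\,\im{\frac{\sin(\cdot)}{r^2}\frac{d}{d\theta}\bigl(\frac{1}{\sin(\cdot)}\frac{d(\overline{\Psi}z)}{d\theta}\bigr) - (\Lapm\overline{\Psi})z}$; applying the Riesz isomorphism $I_X$ to map from $X^*$ back to $X$ then gives the second entry of \eqref{FLinear-adjoint_explicit}, completing the identification.
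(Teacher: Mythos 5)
Your proposal follows essentially the same route as the paper's proof: identify $z=(\Bbone{p}^{-1})^\star y$ as the adjoint state, compute the pairing $\lan \Bbp{\delta p}\Psi, z\ran$ componentwise in $(\delta\gamma,\delta\Omega)$, move the two $\theta$-derivatives in $\alpha_{\delta\Omega}$ onto the product $\overline{\Psi}z$ by two partial integrations whose boundary terms vanish thanks to $\Gamma_m\hat\Psi_m=\Gamma_m\hat z_m=0$ from Proposition \ref{prop:regularity}, and finish with the real part and the Riesz map $I_X$.

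One internal inconsistency should be fixed, though. You quote the linearization as $\mathbf{A}'_m(\delta\gamma,\delta\Omega)[\Psi,\psi]=(\delta\gamma)\lsp\Lapm\Psi,\Lapm\psi\rsp+im\lan(\alpha_{\delta\Omega}-\delta\Omega)\Psi,\psi\ran$ and accordingly write the $\delta\Omega$-contribution as $im\lan(\alpha_{\delta\Omega}-\delta\Omega)\Psi,z\ran$; carried through consistently, the $-\delta\Omega$ part of this would yield a term proportional to $\overline{\Psi}z$ in the second component, not the $(\Lapm\overline{\Psi})z$ you claim to obtain. Since $\mathbf{B}_m$ depends on $\Omega$ through $-im\beta_\Omega\Lapm$ with $\beta_\Omega=\Omega-\Oref$, the correct directional derivative acting on $\Psi$ is $-im(\delta\Omega)\Lapm\Psi+im\,\alpha_{\delta\Omega}\Psi$, and it is the factor $\Lapm\Psi$ that produces $(\Lapm\overline{\Psi})z$ after pairing with $z$; this is the form the paper's own computation uses. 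With that correction (and the observation that in the lifted-regularity setting $\Lapm^2\Psi\in L^2$, so your extra integration by parts for the $\delta\gamma$-term is legitimate but not strictly needed), your argument matches the paper's.
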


\begin{proof}
To compute $[\Bbptwo{\cdot}{\Psi}]^\star$, take $(\delta \gamma,\delta\Omega)\in \R\times X$ and $z\in V_m$ and note that
\begin{align*}
\lan \Bbp{\delta\gamma,\delta\Omega}{\Psi}, z \ran 
&= \lan \delta\gamma \Lapm^2\Psi -im(\delta\Omega)\Lapm\Psi +im\alpha_{\delta\Omega}\Psi\,,\,z\ran\\
&=  
\delta\gamma\int_0^\pi (\Lapm^2\Psi)  \overline{z}r^2\sin(\cdot)\,d\theta
+ \lan \delta\Omega\cc  im(\Lapm\overline{\Psi})z\ran -\lan \alpha_{\delta\Omega} \cc im
\overline{\Psi} z)
\ran 
\end{align*}
To characterize the quantity involving $\alpha_{\delta\Omega}$, we may assume w.l.o.g.\ that $m\neq 0$ 
since $\alpha$ is irrelevant for $m=0$. Using the fact the 
$\Psi$ and $z$ satisfy the boundary conditions $\Gamma_m(\Psi)=\Gamma_m(z)=0$ 
under our regularity assumption on $\Omega$ according to Proposition 
\ref{prop:regularity}, we can carry out the following partial integrations:
\begin{align*}
&\lan \alpha_{\delta\Omega} \cc \overline{\Psi} z\ran\\
&\quad=\int_0^\pi \frac{d}{d\theta}\(\frac{1}{\sin\theta}\frac{d}{d\theta}\(\delta\Omega\sin^2\theta\)\)\Psi \overline{z} \,d\theta 
=\int_0^\pi \frac{-1}{\sin\theta}\frac{d}{d\theta}\(\delta\Omega\sin^2\theta\)\frac{d}{d\theta}\(\Psi \overline{z} \)\,d\theta
\\
&\quad = \int_0^\pi (\delta\Omega)(\theta)\sin^2\theta\frac{d}{d\theta}\(\frac{1}{\sin\theta}\frac{d}{d\theta}\(\Psi \overline{z} \)\)\,d\theta 
= \lan \delta\Omega\cc  \frac{\sin}{r^2}\frac{d}{d\theta}\(\frac{1}{\sin}\frac{d}{d\theta}\(\overline{\Psi} z \)\)\ran\nonumber
\end{align*}
Inserting these formulas into \eqref{FLinear-adjoint} yields \eqref{FLinear-adjoint_explicit}.
\end{proof}

\begin{remark}[Adjoint for partial and real measurements] \label{rem:adj-partial}
Note that the partial observation operator is the restriction operator $R\Psi:=\Psi|_{I'}$. Its Hilbert space adjoint is the extension-by-zero operator 
\[
R^*:L^2(I',r^2\sin)\to L^2(I,r^2\sin)\qquad 
R^*\Psi)(\theta):=\begin{cases}\Psi(\theta),&\theta\in I',\\
0,&\theta\in I\setminus I'.
\end{cases}
\]
The corresponding forward operator relates to that of full measurement via $F_{\mathrm{part}}[p] = RF_{\mathrm{full}}[p]$. Thus, 
$F_{\mathrm{part}}'[p]^* = F_{\mathrm{full}}'[p]^*R^*$ 
with $F_{\mathrm{full}}'[p]^*$ as in Corollary \ref{cor:adj_explicit}.

Similarly, if only the real part of the data can be observed, analogous formulas 
hold true with $R$ replaced by the real-part operator 
$\Re:L^2_{\C}(I,r^2\sin)\to L^2_{\R}(I,r^2\sin)$, of which the Hilbert space adjoint is the canonical embedding 
of $L^2_{\R}(I,r^2\sin)$ into $L^2_{\C}(I,r^2\sin)$. 
\end{remark}

\section{Tangential cone condition}\label{sec:reg_converge}

As discussed in Section \ref{sec:ip}, ill-posedness of our inverse problem necessitates regularization strategies for stable reconstruction. Gradient-based regularization methods, including Landweber, Newton-type schemes, and their variants, rely on three core elements: well-posedness of the forward operator $F$ (Section \ref{sec:well-posed}), the adjoint of the linearized operator (Section \ref{sec:adjoint}), and convergence guarantees.  The third requirement, which forms the primary focus of this section,  is ensured by structural assumptions on $F$ -- particularly, condition on nonlinearity and uniform boundedness of its derivative.

We address this through the tangential cone condition (TCC), a celebrated criterion first introduced in \cite{HNS95} that ensures local convergence of iterative regularization algorithms. Intuitively, if the forward map $F$ is excessively nonlinear, there is no general guarantee that gradient descent steps will remain within the vicinity of the true solution. 
The tangential cone condition offers a quantitative tool for assessing nonlinearity of $F$, particularly of compact operators, thereby enabling application of regularization methods to realistic problems. 
Notably, this condition also implies: characterization of the solution set via null space of the linearized operator, uniqueness of minimum-norm solutions,  
and local convergence of reconstruction sequences towards 
minimum-norm solutions \cite{KalNeuSch08}.

For a general model $F: X\ni p \mapsto y\in Y$, the  TCC states that the following inequality holds:
\begin{align*}
\|F(p)-F(\tilde{p})-F'[p](p-\tilde{p})\|_Y\leq C_{tc}\|p-\tilde{p}\|_X\|F(p)-F(\tilde{p})\|_Y \,\,\, \text{for all } p,\tilde{p}\in B^X_R(p^\dagger)
\end{align*}
where  $C_{tc}>0$ is the TCC constant, and  $B^X_R(p^\dagger)$ is the ball of radius $R$ around a ground truth $p^{\dagger}\in \mathcal{D}(F)$. In our setting, the TCC reads as
\begin{align}\label{tcc}
&\|F(\gamma,\Omega)-F(\tilde{\gamma},\tilde{\Omega})-F'[\gamma,\Omega]((\gamma,\Omega)-(\tilde{\gamma},\tilde{\Omega}))\|_Y \\
&\hspace{4cm}\leq C_{tc}\|(\gamma,\Omega)-\tilde{\gamma},\tilde{\Omega})\|_{\R\times X}\|F(\gamma,\Omega)-F(\tilde{\gamma},\tilde{\Omega})\|_Y \nonumber\\
&\hspace{8cm} \text{for all }  (\gamma,\Omega), (\tilde{\gamma},\tilde{\Omega})\in \ball. \nonumber
\end{align}
Weak variants of the TCC are discussed in \cite{Kindermann17}. Beyond the TCC, other structural conditions can also ensure convergence guarantees: range-invariance \cite{Kaltenbacher23-rangeinv}, convexity, and the Polyak–Łojasiewicz condition \cite{Nguyen24}

Although the TCC is a powerful tool, verification in practice can be challenging, even for specific examples. For a general verification strategy with full data, we refer the reader to \cite{TCC21}. Applications of TCC verification span a wide range: elliptic inverse problems \cite{HoffmanWaldNguyen:2021}, elastography \cite{Nakamura:MRE21, HubmerScherzer:TCC18}, electrical impedance tomography \cite{Kindermann21}, full waveform inversion \cite{EllerRolandRieder24}, and neural network-based inverse problems \cite{ScherzerHofmannNashed, AarsetHollerNguyen23}.

Our contribution is the TCC for inertial wave inversion under full measurement by introducing a lifted regularity strategy (Section \ref{sec:regularity}) that handles realistic $L^2$-data, and extending the verification to the restricted measurement regime, which is novel in this context. \hl{To this end, we employ the lifted regularity result prepared in Section \ref{sec:regularity}, which means that for this section, we assume
\[ \Omega\in H^2(\Mc;m=0).\]}
As a consequence of TCC, we obtain local unique identifiability of the unknown parameters, a property well-known to be important in ill-posed inverse problems.

\subsection{Full measurement}\label{sec:tcc-restricted}

\begin{lemma}[Uniform boundedness]\label{lem:uniform-N}
Given the setting in Proposition \ref{prop:regularity}. The operators $\Bbone{\gamma^{\dagger},\Omega^{\dagger}}^{-1}$ are bounded 
from $\Hdm{-4}$ to $L^2_{\diamond}(\Mc;m)$ for all 
$(\gamma^{\dagger},\Omega^{\dagger})\in \mathcal{D}(S)\cap (\R^+\times H^2(\Mc;m=0))$.  
Moreover, they are locally uniformly bounded in the sense that there exists
$\overline{R}(\gamma^{\dagger},\Omega^{\dagger})>0$ such that 
\[
N^{\gamma^\dagger,\Omega^\dagger}(R) := 
\sup\left\{ \|\Bbone{\gamma,\Omega}^{-1}\|_{H^{-4}\to L^2} : 
(\gamma,\Omega)\in\balltil\right\}
\] 
is finite for all $R\leq \overline{R}(\gamma^{\dagger},\Omega^{\dagger})$, 
and $\balltil\subset \mathcal{D}(S)$.  
\end{lemma}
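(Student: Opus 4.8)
The plan is to deduce both assertions from the lifted-regularity result of Proposition~\ref{prop:regularity} combined with a perturbation (Neumann series) argument that exploits the fact that $p\mapsto\Bbone{p}$ is affinely linear in $p=(\gamma,\Omega)$, so that its increment is exactly the fixed operator $\Bbp{\cdot}$.

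For the first (pointwise) assertion I would argue by duality. Proposition~\ref{prop:regularity} yields that $\mathbf{B}_m^\star:\Hdm{4}\to L^2(\Mc;m)$ is boundedly invertible whenever $\Omega\in H^2(\Mc;m=0)$. Regarding $\Hdm{4}\hookrightarrow L^2_\diamond(\Mc;m)\hookrightarrow\Hdm{-4}$ as a Gelfand triple, the $L^2\to H^{-4}$ realization of $\Bbone{\gamma^\dagger,\Omega^\dagger}$, defined by $\langle\Bbone{p}\Psi,\psi\rangle_{H^{-4},H^4}:=\langle\Psi,\mathbf{B}_m^\star\psi\rangle_{L^2}$, is precisely the Banach-space dual of $\mathbf{B}_m^\star:\Hdm{4}\to L^2$ and hence a bounded isomorphism with
\[
\|\Bbone{\gamma^\dagger,\Omega^\dagger}^{-1}\|_{H^{-4}\to L^2}
=\|(\mathbf{B}_m^\star)^{-1}\|_{L^2\to H^4}<\infty .
\]
This both establishes boundedness and identifies the quantity to be controlled uniformly.

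For the uniform assertion I would work at two regularity levels. At the base level I factor $\Bbone{\gamma,\Omega}=\Bbone{\gamma^\dagger,\Omega^\dagger}\bigl(I+\Bbone{\gamma^\dagger,\Omega^\dagger}^{-1}\Bbp{\delta p}\bigr)$ with $\delta p=(\gamma-\gamma^\dagger,\Omega-\Omega^\dagger)$, where $\Bbone{\gamma^\dagger,\Omega^\dagger}^{-1}:\Hdm{-2}\to\Hdm{2}$ and $\Bbp{\delta p}:\Hdm{2}\to\Hdm{-2}$. The crucial point is that, by Proposition~\ref{prop:boundedness} and the gradient form \eqref{eq:alt_defi_Aalpha} of the $\alpha$-term, one has $\|\Bbp{\delta p}\|_{H^2\to H^{-2}}\lesssim\|\delta p\|_{\R\times X}$ using only the $H^1$-norm of $\delta\Omega$. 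Choosing $\overline R(\gamma^\dagger,\Omega^\dagger)$ below $\bigl(c\,\|\Bbone{\gamma^\dagger,\Omega^\dagger}^{-1}\|_{H^{-2}\to H^2}\bigr)^{-1}$, the Neumann series converges uniformly on $\balltil$, which simultaneously gives $\balltil\subset\mathcal D(S)$ and $\sup_{\balltil}\|\Bbone{\gamma,\Omega}^{-1}\|_{H^{-2}\to H^2}<\infty$. To upgrade to the $H^{-4}\to L^2$ norm I would re-run the lifting computation from the proof of Proposition~\ref{prop:regularity}: for $f\in\Hdm{-4}$ the auxiliary datum obeys $\|\tilde f_m\|_{L^2}\le\|f\|_{H^{-4}}+C(|\omega|,|m|,\|\Omega\|_{H^2})\|\Psi_m\|_{H^2}$ by the bound \eqref{eq:alpha_bound} on $\alpha_\Omega\Psi_m$ and the Banach-algebra estimate on $\beta_\Omega\Psi_m$, while $\|\Psi_m\|_{H^2}\le\|\Bbone{\gamma,\Omega}^{-1}\|_{H^{-2}\to H^2}\|f\|_{H^{-4}}$ and $\|\Psi_m\|_{H^4}=\|\tilde f_m\|_{L^2}$ by the isometry \eqref{eq:Laph_isometric_iso}. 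This produces $\|\Bbone{\gamma,\Omega}^{-1}\|_{H^{-4}\to L^2}\le C(|\omega|,|m|,\|\Omega\|_{H^2})\,\|\Bbone{\gamma,\Omega}^{-1}\|_{H^{-2}\to H^2}$.

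The main obstacle is that the lifting constant depends on $\|\Omega\|_{H^2}$, whereas $\balltil$ is measured in the weaker norm of $X=H^1$: an $H^1$-ball contains rotations that are not in $H^2$, for which the increment $\alpha_{\delta\Omega}\Psi$ genuinely fails to define a bounded map $L^2\to\Hdm{-4}$ (its symbol contains $\delta\Omega''$, so the $H^{-1}$ object $\alpha_{\delta\Omega}$ cannot be paired with the merely $L^2$ factor $\Psi$). Reconciling the two scales is therefore the crux of the argument. I would resolve it by taking the supremum in $N^{\gamma^\dagger,\Omega^\dagger}(R)$ over those admissible parameters carrying the $H^2$-regularity demanded by Proposition~\ref{prop:regularity} and controlling $\sup_{\balltil}\|\Omega\|_{H^2}$ there, after which finiteness of $N^{\gamma^\dagger,\Omega^\dagger}(R)$ follows from the uniform base-level bound established above — exactly the input needed for the tangential cone estimate in the next section.
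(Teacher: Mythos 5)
Your architecture is essentially the paper's: the pointwise assertion is obtained, exactly as in the paper, by identifying $\Bbone{\gamma^\dagger,\Omega^\dagger}:L^2_{\diamond}(\Mc;m)\to\Hdm{-4}$ as the Banach adjoint of $\Bbone{\gamma^\dagger,\Omega^\dagger}^\star:\Hdm{4}\to L^2_{\diamond}(\Mc;m)$ and invoking Proposition \ref{prop:regularity}; and your observation that the parameter norms enter the lifting bounds affinely is precisely the (one-line) justification the paper gives for uniformity. Your Neumann-series factorization at the $H^{-2}\to H^2$ level is a welcome, more explicit version of the base-level uniform invertibility (the paper leaves this implicit), and it correctly delivers $\balltil\subset\mathcal{D}(S)$.

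The upgrade step, however, would fail as written. You run the lifting computation for a datum $f\in\Hdm{-4}$ and claim $\|\tilde f_m\|_{L^2}\le\|f\|_{H^{-4}}+\cdots$ together with $\|\Psi_m\|_{H^2}\le\|\Bbone{\gamma,\Omega}^{-1}\|_{H^{-2}\to H^2}\|f\|_{H^{-4}}$. Neither inequality holds: an element of $\Hdm{-4}$ need not lie in $L^2$ (the embedding goes the other way, $\|f\|_{H^{-4}}\lesssim\|f\|_{L^2}$), so $\tilde f_m$ is not an $L^2$ function and the biharmonic identity $\|\Psi_m\|_{H^4}=\|\tilde f_m\|_{L^2}$ is unavailable; likewise the $H^{-2}\to H^2$ inverse cannot even be applied to $f\in\Hdm{-4}$. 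The repair is exactly the duality you set up in your first paragraph and then abandoned: run the legitimate lifting argument for the adjoint $(\Bbone{\gamma,\Omega}^\star)^{-1}:L^2\to\Hdm{4}$, whose datum genuinely is in $L^2$ and whose $H^{-2}\to H^2$ norm your Neumann series controls uniformly, and then transfer the bound via $\|\Bbone{\gamma,\Omega}^{-1}\|_{H^{-4}\to L^2}=\|(\Bbone{\gamma,\Omega}^\star)^{-1}\|_{L^2\to H^4}$. Separately, you are right to flag the norm mismatch in the ball, but your proposed resolution does not work: over a genuine $H^1$-ball one cannot ``control $\sup\|\Omega\|_{H^2}$'', since that supremum is infinite. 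The statement must be read (consistently with the appearance of $\|h\|_{\R\times H^2}$ in the tangential cone proof) with the ball taken in $\R\times H^2(\Mc;m=0)$, where $\sup\|\Omega\|_{H^2}\le\|\Omega^\dagger\|_{H^2}+R$ and the affine dependence of the lifting constants closes the argument.
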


\begin{proof}
Boundedness follows from the lifted regularity result in Proposition \ref{prop:regularity} and that $\Bbone{\gamma^{\dagger},\Omega^{\dagger}}: L^2_{\diamond}(\Mc;m)\to \Hdm{-4}$  
is the Banach adjoint of $\Bbone{\gamma^{\dagger},\Omega^{\dagger}}^\star: \Hdm{4}\to L^2_{\diamond}(\Mc;m)$. The local uniformity of norm bounds can be seen from the fact that the norms of 
$\gamma$, $\Omega$ appear in an affine liner manner in all upper bounds in the proof 
of Proposition \ref{prop:regularity}. 
\end{proof}

\begin{theorem}[TCC -- full measurement]\label{theo:tcc-full}
Given the setting in Proposition \ref{prop:regularity}. For $m\neq 0$, the forward operator $F$ with full measurements satisfies the 
TCC \eqref{tcc}. 
\end{theorem}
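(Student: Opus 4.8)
The plan is to collapse the TCC into a single operator estimate and then resolve it through an algebraic cancellation in the rotation terms.

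\emph{Step 1 (residual identity).} Writing $p=(\gamma,\Omega)$, $\tilde p=(\tilde\gamma,\tilde\Omega)$, $\delta p:=p-\tilde p$, $\Psi:=\Bbone{p}^{-1}f=S(p)$, $\tilde\Psi:=S(\tilde p)$ and $w:=\Psi-\tilde\Psi=F(p)-F(\tilde p)$, I would exploit that $p\mapsto\Bbone{p}$ is affine, so $\Bbone{p}-\Bbone{\tilde p}=\Bbp{\delta p}$. From $\Bbone{p}\Psi=\Bbone{\tilde p}\tilde\Psi=f$ one obtains $\Bbone{p}w=-\Bbp{\delta p}\tilde\Psi$, and together with the sensitivity formula \eqref{sen-eq} the residual telescopes to
\begin{align*}
F(p)-F(\tilde p)-F'[p]\delta p=\Bbone{p}^{-1}\Bbp{\delta p}\,w .
\end{align*}
Since here $L$ is the compact embedding into $L^2$, the TCC \eqref{tcc} reduces to proving $\|\Bbone{p}^{-1}\Bbp{\delta p}w\|_{L^2}\le C_{tc}\|\delta p\|_{\R\times X}\|w\|_{L^2}$.

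\emph{Step 2 (lifted regularity).} By Lemma \ref{lem:uniform-N} the inverse $\Bbone{p}^{-1}\colon\Hdm{-4}\to L^2$ is bounded uniformly over $\balltil$ by $N^{\gamma^\dagger,\Omega^\dagger}(R)$, so it suffices to show
\begin{align*}
\|\Bbp{\delta p}w\|_{\Hdm{-4}}\le C\,\|\delta p\|_{\R\times X}\,\|w\|_{L^2}.
\end{align*}
I would estimate this by duality against $\psi\in\Hdm{4}$. By the lifted regularity of Proposition \ref{prop:regularity} for both $\Bbone{p}$ and $\Bbone{p}^\star$, I may take $w$ and the test functions in $\Hdm{4}$ with vanishing Cauchy data $\Gamma_m=0$, so all integrations by parts below are boundary-free. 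With $\Bbp{\delta p}w=\delta\gamma\,\Lapm^2 w+im\big(\alpha_{\delta\Omega}w-\delta\Omega\,\Lapm w\big)$, the viscosity part is immediate: self-adjointness of $\Lapm^2$ gives $\langle\delta\gamma\,\Lapm^2 w,\psi\rangle=\delta\gamma\langle w,\Lapm^2\psi\rangle$, bounded by $|\delta\gamma|\,\|w\|_{L^2}\|\psi\|_{H^4}$.

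\emph{Step 3 (the cancellation -- main obstacle).} The difficulty is the rotation pairing $im\langle\alpha_{\delta\Omega}w-\delta\Omega\,\Lapm w,\psi\rangle$. Estimated term by term it only closes with $\|w\|_{H^1}$ (equivalently with $\|\delta\Omega\|_{H^2}$), both of which are too strong for \eqref{tcc}. The key, mirroring the cancellation of the mixed derivative $\tfrac{1}{r^2}\partial_\theta\Omega\,\partial^2_{\theta\phi}\Psi$ in the model identities \eqref{eq:defi_alpha}, is to keep the two pieces together. Writing $\alpha_{\delta\Omega}=\tfrac{1}{r^2\sin\theta}\partial_\theta\tilde\alpha_{\delta\Omega}$ with $\tilde\alpha_{\delta\Omega}=\delta\Omega'\sin\theta+2\delta\Omega\cos\theta$ as in Lemma \ref{lem:Aalpha}, and integrating by parts in $\theta$, the top-order contribution $\int\delta\Omega'\sin\theta\,(\partial_\theta w)\overline\psi\,d\theta$ coming from $\alpha_{\delta\Omega}$ is exactly annihilated by the one produced when the $\partial_\theta^2$ in $\Lapm w$ is integrated by parts, while the $\cot\theta$-term of $\Lapm$ absorbs the Jacobian factor. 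After these cancellations every surviving term carries at most one $\theta$-derivative on $\delta\Omega$, none on $w$, and derivatives only on the smooth factor $\psi$; the generalized H\"older inequality \eqref{eq:Hoelder_triple} and the embeddings \eqref{eqs:separated_embeddings_sphere} then deliver $\le C(m)\,\|\delta\Omega\|_{H^1}\,\|w\|_{L^2}\,\|\psi\|_{H^4}$. The singular weight $m^2/\sin\theta$ stemming from $\Lapm$ is controlled through $\psi|_P=0$, valid precisely for $m\neq0$, which is exactly the hypothesis of the theorem.

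\emph{Step 4 (conclusion).} Taking the supremum over $\|\psi\|_{H^4}\le1$ gives the reduced estimate of Step 2, and composing with $N^{\gamma^\dagger,\Omega^\dagger}(R)$ yields \eqref{tcc} with $C_{tc}=N^{\gamma^\dagger,\Omega^\dagger}(R)\,C$, where $C$ depends on $m$, $r$, the embedding constants and $\|\Omega^\dagger\|_{H^2}$. I expect the whole weight of the argument to sit in Step 3: treating $\alpha_{\delta\Omega}$ and $\delta\Omega\,\Lapm$ in isolation loses half a derivative on $w$ and breaks the TCC, so identifying the cancellation (and the accompanying pole bookkeeping that forces $m\neq0$) is the technical heart of the proof.
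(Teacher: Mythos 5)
Your Steps 1, 2 and 4 reproduce the paper's argument essentially verbatim: the same residual identity $S(\tilde p)-S(p)-S'[p](\tilde p-p)=-\Bbone{p}^{-1}\Bbp{\tilde p-p}\bigl(S(\tilde p)-S(p)\bigr)$, the same reduction to the operator bound $\|\Bbone{p}^{-1}\Bbp{h}\|_{L^2\to L^2}\le C\|h\|$, the same use of Lemma \ref{lem:uniform-N} for the factor $\|\Bbone{p}^{-1}\|_{H^{-4}\to L^2}$, and the same duality pairing against $\varphi\in\Hdm{4}$. Where you diverge is Step 3, and there your premise is mistaken: the paper does estimate the rotation terms \emph{separately}, bounding $\langle im\,w,\alpha_{\delta\Omega}\varphi\rangle$ by $C_\alpha|m|\,\|\delta\Omega\|_{H^2}\|w\|_{L^2}\|\varphi\|_{H^2}$ via \eqref{eq:alpha_bound} and the $\beta$-term by $|m|\,\|\delta\Omega\|_{L^\infty}\|w\|_{L^2}\|\varphi\|_{H^2}$, with \emph{no} derivative ever landing on $w$. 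This closes the TCC with the parameter norm $\|h\|_{\R\times H^2}$ --- which is exactly the norm in which the theorem is meant (the hypothesis ``setting of Proposition \ref{prop:regularity}'' puts $\Omega\in H^2(\Mc;m=0)$, the last line of the paper's proof reads $N^{\gamma^\dagger,\Omega^\dagger}(R)C_{\mathrm{emb}}\|h\|_{\R\times H^2}$, and the numerics invert $\Omega$ in $H^2$). So the cancellation you build Step 3 around is not needed, and your claim that the term-by-term route ``breaks the TCC'' is false in this setting.

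That said, your cancellation is not wrong: pairing $\alpha_{\delta\Omega}w$ with $-\delta\Omega\,\Lapm w$ and integrating by parts does annihilate the $\int\delta\Omega'\sin\theta\,(\partial_\theta w)\overline\psi$ contributions, and after a second integration by parts on the remaining $\partial_\theta w$-terms (permissible since $\delta\Omega$ is then undifferentiated) one is left with at most one derivative on $\delta\Omega$, none on $w$, and a weight $m^2/\sin\theta$ controlled by $\psi|_P=0$ for $m\neq 0$. Carried out carefully --- the pole bookkeeping requires the same care as Lemma \ref{lem:symmetry}, and you only sketch it --- this would sharpen the estimate on $\|\Bbp{h}\|_{L^2\to H^{-4}}$ from $\|\delta\Omega\|_{H^2}$ to $\|\delta\Omega\|_{H^1}$. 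The practical gain is limited, however, because Step 2 still requires every $\Omega$ in the ball to lie in $H^2$ (Lemma \ref{lem:uniform-N} rests on the lifted regularity of Proposition \ref{prop:regularity}), so the ball cannot be taken in the $H^1$-topology anyway. In short: correct skeleton, a harder and only partially executed Step 3 motivated by a misdiagnosis of where the simple estimate fails.
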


\begin{proof}
Recall $F=S$ for the full measurements . For any $p^{\dagger}=(\gamma^{\dagger},\Omega^{\dagger})\in \mathcal{D}(S)$, 
we have to show that there exists $R,C_\text{tc}>0$ such that 
\begin{align*}
\begin{split}
\|S(p+h)-S(p)-S'[p]h\|_{L^2}\leq C_\text{tc}
\|h\|_{\R\times H^2}\|S(p)-S(p+h)\|_{L^2}\\
\end{split}
\end{align*}
with $h:=\tilde{p}-p$, for all $p,\tilde{p}\in \ball$. 
Using the identity $\Bbone{p+h}-\Bbone{p}=\Bbp{h}$ and twice 
the identity $T^{-1}-S^{-1}=T^{-1}(S-T)S^{-1}$ 
for invertible operators $S$ and $T$, we obtain
\begin{align}\label{tcc-Q}
S(p+h)-S(p)-S'[p]h &= \Bbone{p+h}^{-1}f - \Bbone{p}^{-1}f + \Bbone{p}^{-1} \Bbp{h} S(p) \nonumber\\
&=[-\Bbone{p+h}^{-1}+\Bbone{p}]^{-1}] \Bbp{h}S(p) \nonumber\\
&=\Bbone{p}^{-1}\Bbp{h}\Bbone{p+h}^{-1}\Bbp{h} \Bbone{p}^{-1}f \nonumber\\
&=-\Bbone{p}^{-1}\Bbp{h}(S(p+h)-S(p)).
\end{align}
Therefore, it suffices to show that 
$\|\Bbone{p}^{-1}\Bbp{h}\|_{L^2\to L^2}\leq C_\mathrm{tc}\|h\|_{\R\times H^2}$.

We first estimate $\|\Bbp{h}\|_{L^2\to H^{-4}}$. Let $h=:(\delta \gamma,\delta \Omega)$, for $w\in L^2(\Mc;m)$  and $\varphi\in \Hdm{4}$, we obtain the following bound on the inner product:
\begin{eqnarray*}
\lefteqn{\lan \Bbp{h} w, \varphi\ran
=\lan \delta\gamma w, \Lapm^2\varphi\ran  
-\lan im(\delta\Omega) w, \Lapm\varphi\ran 
+ \lan imw, \alpha_{\delta\Omega}\varphi\ran }\\
&\leq&  |\delta\gamma|\|w\|_{L^2}\|\varphi\|_{H^4} + |m|\|\delta\Omega\|_{L^\infty}\|w\|_{L^2}\|\varphi\|_{H^2} 
+  C_{\alpha}|m|\|\delta\Omega\|_{H^2}\|w\|_{L^2}\|\varphi\|_{H^2} \\
&\leq&  C_\text{emb}\|h\|_{\R\times\Xtil}\|w\|_{L^2} \|\varphi\|_{H^4}
\end{eqnarray*}
where $C_{\mathrm{emb}}:=\sqrt{2}\max\big\{1,|m|C_{H^4\to H^2}
(C_{H^2\to L^\infty}+C_{\alpha})\big\}$ with $C_\alpha$ as in \eqref{eq:alpha_bound}.  
This shows that \[\|\Bbp{h}\|_{L^2\to H^{-4}}\leq C_\text{emb}\|h\|_{\R\times\Xtil}.\]
Then, together with the locally uniform boundedness in Lemma \ref{lem:uniform-N}, we achieve
\begin{equation}\label{tcc-B}
\begin{split}
\|\Bbone{p}^{-1}\Bbp{h}\|_{L^2\to L^2}
&\leq \|\Bbone{p}^{-1}\|_{H^{-4}\to L^2}\|\Bbp{h}\|_{L^2\to H^{-4}}\\
&\leq N^{\gamma^\dagger,\Omega^\dagger}(R)C_\text{emb}\|h\|_{\R\times H^2}.
\end{split}
\end{equation}
In this step, we observe the importance of the lifted regularity laid out in Section \ref{sec:regularity}, which is the key to Lemma \ref{lem:uniform-N}. This proves the TCC \eqref{tcc} in $B^X_R(p^\dagger)$.
\end{proof}

With the TCC, we further obtain unique identifiability of $\Omega$ given $\gamma$ and vice versa. We denote by $\mathrm{Arg}(z)\in[0,2\pi)$ the principal value of the argument of a complex number $z$.
\begin{corollary}[Unique identifiability]\label{coro:uniqueness}
Let $m\neq0$ and assume full measurement.
\begin{enumerate}
\item[(i)] Assume $\gamma$ is known. Suppose that \hl{for some 
subinterval $I'\subset (0,\pi)$ (possibly $I'=(0,\pi)$) we have}
\begin{equation}\label{nullset}
\Lapm\Psi\neq0 \text{ a.e.~on } \hl{I'},\quad \mathrm{Arg}(\Psi)-\mathrm{Arg}(\Lapm\Psi)\notin\{-\pi,0,\pi\} \text{ a.e.~on } \hl{I'}.
\end{equation}
Then $\Omega\hl{|_{I'}}$ is uniquely determined locally in $B_R^X(\Omega^\dagger)$. \hl{(We refer to Appendix 
\ref{appendix:baire} for a discussion of assumption \eqref{nullset}.)}
\item[(ii)]
i) Assume $\Omega$ is known. Suppose that 
\begin{equation}\label{nullset-gamma}
\Lapm^2 \Psi\neq 0 \text{ on some non-null subset of } I.
\end{equation}
Then $\gamma$ is uniquely determined by full measurement, locally in $B_R^X(\Omega^\dagger)$.
\end{enumerate}
\end{corollary}
\begin{proof}
(i) We first prove  that \hl{$\delta\Omega\in \Nc(F'[\Odag])$ implies 
$\delta\Omega|_{I'}=0$,}
 where $F=L\circ S$ with the embedding $L$. By injectivity of the embedding, it suffices to show  \hl{$\delta\Omega\in \Nc(S'[\Omega^\dagger])$ implies $\delta\Omega|_{I'}=0$.}
 Recall from Lemma \ref{lem:sensitivity} the linearized state equation 
\[
S'[\Odag]\delta\Omega= -\Bbone{\Odag}^{-1}(- im (\delta\Omega)\Lapm\Psi^\dagger+ im\alpha_{\delta\Omega} \Psi^\dagger),
\]
where $\Psi^\dagger=S(\Odag)$. Due to invertibility of $\Bbone{\Odag}$, the linearized state $S'[\Omega^\dagger]\delta\Omega=0\in L^2(\Omega)$ if and only if $\alpha_{\delta\Omega} \Psi^\dagger = (\delta\Omega)\Lapm\Psi^\dagger$ a.e.~on $I$. 

\hl{First assume that} $\alpha_{\delta\Omega} \Psi^\dagger=(\delta\Omega)\Lapm\Psi^\dagger=0$ a.e.~on 
\hl{$I'$. This implies }
$\delta\Omega=0$ a.e~on \hl{$I'$} by the first assumption in \eqref{nullset}.

If this is not the case, 
there exists a non-null subset of \hl{$I'$} where $\alpha_{\delta\Omega} \Psi^\dagger = (\delta\Omega)\Lapm\Psi^\dagger\neq 0$ a.e. This allows  taking $\mathrm{Arg}$ here; noting that $\delta\Omega$, $\alpha_{\delta\Omega}$ are real functions, it necessarily holds that $\mathrm{Arg}(\Psi)-\mathrm{Arg}(\Lapm\Psi)\in\{-\pi,0,\pi\}$ a.e.~on this non-null subset, contradicting the second part of assumption \eqref{nullset}. Thus, \hl{the second case is impossible, and we always have $\delta\Omega=0$ a.e~on $I'$.}

From this, along with the characterization of the solution set given the TCC in \cite[Proposition 2.1]{HNS95}, we deduce $\Omega^*-\Odag\in\Nc(F'[\Odag])$  \hl{and thus $\Omega^*|_{I'}=\Odag|_{I'}$}
for any further solution $\Omega^*\in B_R^X(\Omega^\dagger)$ of the inverse problem. 

(ii) Assumption \eqref{nullset-gamma} implies that $F'[\gamma^\dagger]\delta\gamma=\delta\gamma\Lapm^2\Psi^\dagger=0\in L^2(\Omega)$ holds if and only if $\delta\gamma=0\in\R$. The remainder of the argument proceeds as above.
\end{proof}

\subsection{Partial measurement including Cauchy data}\label{sec:tcc-restricted}
In the last part of the analysis, we consider the restricted measurement scenario where observational latitude range is limited to $I' = (\epsilon, \pi-\epsilon)$ as described in Section \ref{sec:data}. For simplification, we additionally assume that 
Cauchy data at the boundary of $I'$ can be measured exactly. 
We follow the analysis developed for the full observation, but specifically leveraging the well-posedness and regularity results for the restricted domain from Proposition \ref{prop:wellposed-partial} in  Section \ref{sec:wellposed-restricted}.

\begin{theorem}[TCC -- partial measurement]\label{theo:tcc-restrict}
Consider the setting in Proposition \ref{prop:wellposed-partial}, \ref{prop:wellposed-partial-liftedreg}. If one imposes the constraint
\begin{align}\label{tcc-partial-extra-info}
\Psi|_{\partial I'}=\Psi^\dagger|_{\partial I'}\quad\text{and}\quad \Psi'|_{\partial I'}=(\Psi^\dagger)'|_{\partial I'},
\end{align}
where $\Psi^\dagger$ is the exact state, then for $m\neq 0$, the forward operator $F$ with partial measurements satisfies the TCC \eqref{tcc}. 
\end{theorem}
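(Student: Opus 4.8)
The plan is to mirror the proof of Theorem \ref{theo:tcc-full} (the full-measurement case) as closely as possible, substituting the restricted-domain well-posedness and lifted-regularity statements of Proposition \ref{prop:wellposed-partial} for their full-Sun counterparts. The crucial point is the algebraic identity expressing the Taylor remainder of $F=RS$: writing $h:=\tilde p - p$ and using $\Bbone{p+h}-\Bbone{p}=\Bbp{h}$ together with the resolvent identity $T^{-1}-S^{-1}=T^{-1}(S-T)S^{-1}$ twice, one obtains
\begin{align*}
S(p+h)-S(p)-S'[p]h = -\Bbone{p}^{-1}\Bbp{h}\big(S(p+h)-S(p)\big),
\end{align*}
exactly as in the full case. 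Because of the extra Cauchy-data constraint \eqref{tcc-partial-extra-info}, both perturbed and unperturbed states share the same boundary values at $\partial I'$, so the difference $S(p+h)-S(p)$ lies in $H^2_0(I';m)$ and the splitting $\Psi=\mathbf{B}_m^{-1}(f_m-g)+u$ contributes no boundary term to the remainder. This is what makes the restricted operators $\mathbf{B}_m:H^2_0(I';m)\to H^{-2}(I';m)$ the right objects, and it is the reason the constraint is imposed.

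First I would invoke Proposition \ref{prop:wellposed-partial}\ref{prop:wellposed-partial-liftedreg} to obtain the restricted-domain analogue of Lemma \ref{lem:uniform-N}, namely that $\Bbone{\gamma,\Omega}^{-1}:H^{-4}(I';m)\to L^2(I';m)$ is locally uniformly bounded over a ball $\balltil$; as noted there, the norms of $\gamma$ and $\Omega$ enter all bounds affinely, so the supremum $N^{\gamma^\dagger,\Omega^\dagger}(R)$ is finite for small $R$. Then I would reduce the TCC to the operator-norm estimate $\|\Bbone{p}^{-1}\Bbp{h}\|_{L^2\to L^2}\leq C_{\mathrm{tc}}\|h\|_{\R\times H^2}$, split as
\begin{align*}
\|\Bbone{p}^{-1}\Bbp{h}\|_{L^2\to L^2}\leq \|\Bbone{p}^{-1}\|_{H^{-4}\to L^2}\,\|\Bbp{h}\|_{L^2\to H^{-4}}.
\end{align*}
The second factor is controlled exactly as in the full case: testing $\lan \Bbp{h}w,\varphi\ran$ against $\varphi\in H^4_0(I';m)$ and using the bound \eqref{eq:alpha_bound} on $\|\alpha_{\delta\Omega}\varphi\|_{L^2}$ together with the Sobolev embeddings on $I'$ yields $\|\Bbp{h}\|_{L^2\to H^{-4}}\leq C_{\mathrm{emb}}\|h\|_{\R\times\Xtil}$.

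The main obstacle to watch is the integration-by-parts step underlying the $\alpha_{\delta\Omega}$ estimate and the vanishing of boundary contributions on the \emph{restricted} interval. On the full sphere these terms vanish because of the pole behavior encoded in $\Gamma_m$; on $I'$ one must instead rely on the homogeneous Cauchy conditions built into $H^2_0(I';m)$ and $H^4_0(I';m)$, which is precisely where the constraint \eqref{tcc-partial-extra-info} and the definitions \eqref{space-I} are used to kill the boundary terms when partially integrating the term $\lan\alpha_{\delta\Omega}\overline\Psi z\ran$ as in Corollary \ref{cor:adj_explicit}. Once these boundary terms are verified to vanish, chaining the two factors gives
\begin{align*}
\|\Bbone{p}^{-1}\Bbp{h}\|_{L^2\to L^2}\leq N^{\gamma^\dagger,\Omega^\dagger}(R)\,C_{\mathrm{emb}}\|h\|_{\R\times H^2},
\end{align*}
which establishes the TCC \eqref{tcc} on $\ball$ for partial measurements and completes the proof.
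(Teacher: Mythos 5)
Your proposal is correct and follows essentially the same route as the paper: the paper's own proof simply declares the argument ``almost identical'' to the full-measurement case, observes that $S(p+h)-S(p)$ and $S'(p)h$ satisfy homogeneous Cauchy conditions on $\partial I'$ so that the integrations by parts go through, and then chains $\|\Bbone{p}^{-1}\|_{H^{-4}(I')\to L^2(I')}\|\Bbp{h}\|_{L^2(I')\to H^{-4}(I')}$ exactly as you do. Your write-up in fact supplies more of the details (the role of the constraint \eqref{tcc-partial-extra-info} in placing the difference of states in $H^2_0(I';m)$, and the restricted-domain analogue of Lemma \ref{lem:uniform-N} via Proposition \ref{prop:wellposed-partial}) than the paper makes explicit.
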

\begin{proof}
The proof is almost identical to that of  Theorem \ref{theo:tcc-full}.
Note that \hl{if one proceeds as in \eqref{tcc-Q}}, $S(p+h)-S(p)$ and $S'(p)$ both satisfy homogeneous Cauchy boundary conditions on $\partial I'$. \hl{The fact that all the invertible operators act on data with homogeneous Cauchy boundary condition, as proven in Section \ref{sec:wellposed-restricted},} allows us to carry out partial integration as before and prove that
\begin{align*}
\|\Bbone{p}^{-1}\Bbp{h}\|_{L^2(I')\to L^2(I')}
&\leq \|\Bbone{p}^{-1}\|_{H^{-4}(I')\to L^2(I')}\|\Bbp{h}\|_{L^2(I')\to H^{-4}(I')}\\
&\leq N^{\gamma^\dagger,\Omega^\dagger}(R)C_\text{emb}\|h\|_{\R\times H^2(I';m=0)}\\
&\leq N^{\gamma^\dagger,\Omega^\dagger}(R)C_\text{emb}\|h\|_{\R\times H^2(\Mc;m=0)}
\end{align*}
for any $p$, $p+h$ in the ball $B^X_R(p^\dagger)$.
\end{proof}

The boundary data assumption in \eqref{tcc-partial-extra-info}, requiring knowledge of the exact state at $\partial I'$, is important for the proof of Theorem \ref{theo:tcc-restrict}. Relaxing this condition presents an interesting question for future research. With this theoretical discussion concluded, we now turn to the numerical experiments.

\section{Numerical experiments}\label{sec:numerics}

This section is dedicated to numerical experiments on synthetic data, using analytical ground truths.

\paragraph{\hl{Regularization method.}} We implemented the accelerated Nesterov-Landweber method to simultaneously reconstruct the scalar viscosity and latitudinal differential rotation parameters $(\gamma, \Omega)$. 
A convergence analysis of this method has been carried out in  \cite{HubmerRamlau17, Neubauer17} under the tangential cone condition, which we have been verified in Section \ref{sec:reg_converge}. \hl{The Nesterov-Landweber iteration is expressed in pseudocode in Algorithm \ref{algorithm}}. 

\color{blue}
\begin{algorithm}[h!]
\caption{Nestorov-Landweber with backtracking}\label{algorithm}
\begin{algorithmic}[1]
\Require \hl{Initial guess $p_{-1}=p_0:=(\gamma_\mathrm{int},\Omega_\mathrm{int})\in \mathbb{R}\times H^2(\Mc; m=0)$, $z_0:=0\in H^2(\Mc; m=0)$, starting index $k=0$, acceleration parameter $\alpha=3$, discrepancy constant $\tau>2$, noise level $\delta > 0$, noisy data $y^\delta\in Y$}
\While{\hl{discrepancy principle $\|F(z_{k}) - y^\delta\| \leq \tau\delta$ not satisfied}}
\State \hl{$z_k \gets p_k + \frac{k-1}{k+\alpha-1}\left(p_k-p_{k-1}\right)$ \hfill $\triangleright$ acceleration step} 
\State \hl{Choose step size $\mu_k>0$ via backtracking line search}
\State \hl{$p_{k+1} \gets z_k-\mu_k F'[z_k]^*\left( F(z_k)-y^\delta \right)$ \hfill $\triangleright$ parameter update step}
\State \hl{$k\gets k+1$}
\EndWhile
\State \hl{Set stopping index $K(y^\delta,\delta)\gets k$}\\
\Return \hl{$p_{K(y^\delta,\delta)}$}
\end{algorithmic}
\end{algorithm}
\color{black}

In \hl{Algorithm \ref{algorithm}}, the forward map $F$ corresponds to the wave equation at single frequency $\omega$ and longitudinal wavenumber $m$. We allow for different observation strategies: full data, leaked data and real part measurements. The Hilbert space adjoints $F'[z_k]^*$ are derived in Section \ref{sec:adjoint}. 

\paragraph{\hl{Discretization and data.}} The numerical wave solver uses a fourth-order finite difference scheme \cite{VersyptBraatz14} on a uniform grid of 100 points spanning the latitude range $(0, \pi)$. To generate ground truths while avoiding inverse crimes, we choose analytic $\Psi_m$,$\gamma$, and $\Omega$ with Neumann (Fig.~\ref{fig-full}) or  Dirichlet (Fig.~\ref{fig-leak-noise}) boundary, then explicitly obtain the corresponding source $f$. The discrete measurement $\underline{y}^\delta$ is corrupted by independent additive complex Gaussian noise with variance $\sigma^2$ on 100 equidistant measurement points and $\sigma$ 
chosen to have relative noise levels 
$$\|\underline{y}^{\delta}-\underline{y}\|_2/\|\underline{y}\|_2\in \{0.01, 0.05, 0.1, 0.2\}$$ 

\paragraph{\hl{Initialization and stopping rule.}} We initialize the algorithm with $\gamma_\mathrm{int} = -3\gamma_\mathrm{true}$, $\Omega_\mathrm{int} = 0$ corresponding to no prior knowledge of ground truths, and iterate with a step size $\mu_k$ determined by backtracking line search. \hl{Regarding the discrepancy principle employed in Algorithm \ref{algorithm}, the lower value of $\tau=1.03$ was found to yield better results in practice than the theoretically required $\tau>2$ \cite{HubmerRamlau17}}. The algorithm typically completes in 200 iterations within one second on an i7-1255U CPU (4.70 GHz)\footnote{Code is available on Github at \url{https://github.com/TramNguyenAca/Inertial_Waves}.}.

\begin{figure}[htb!]
\centering
\includegraphics[scale=0.8]{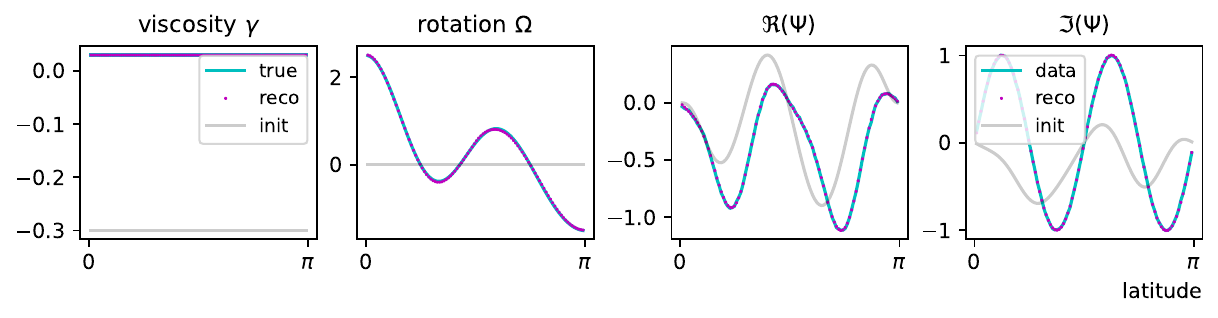}
\includegraphics[scale=0.8]{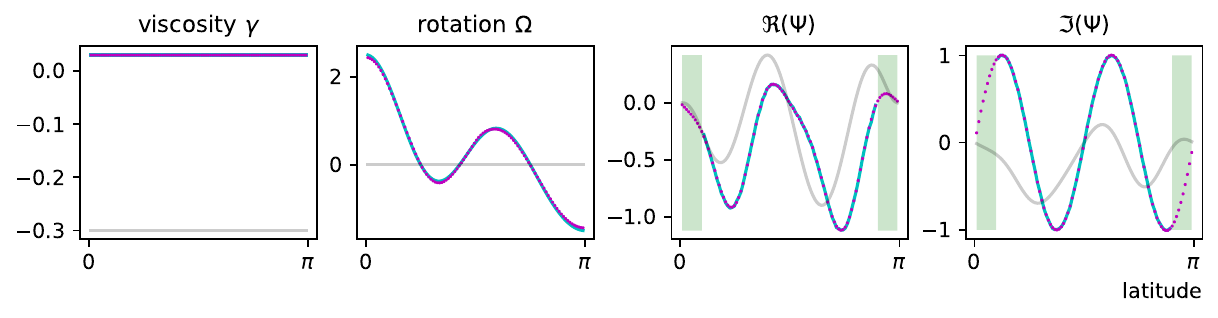}
\includegraphics[scale=0.8]{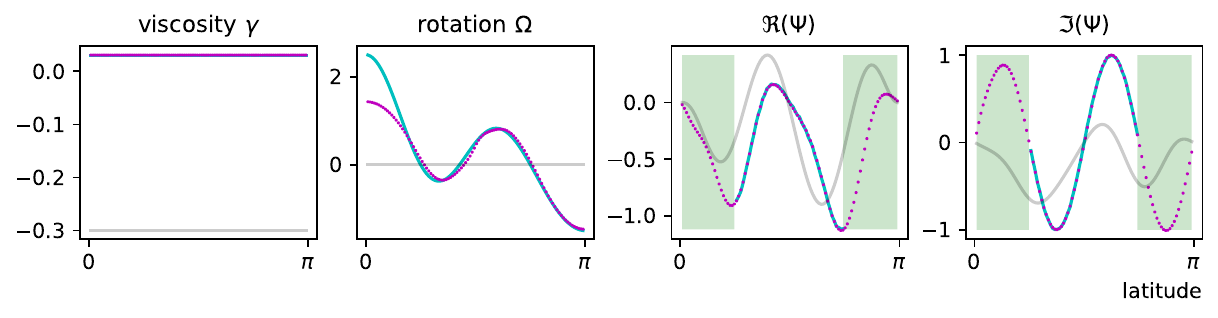}
\includegraphics[scale=0.8]{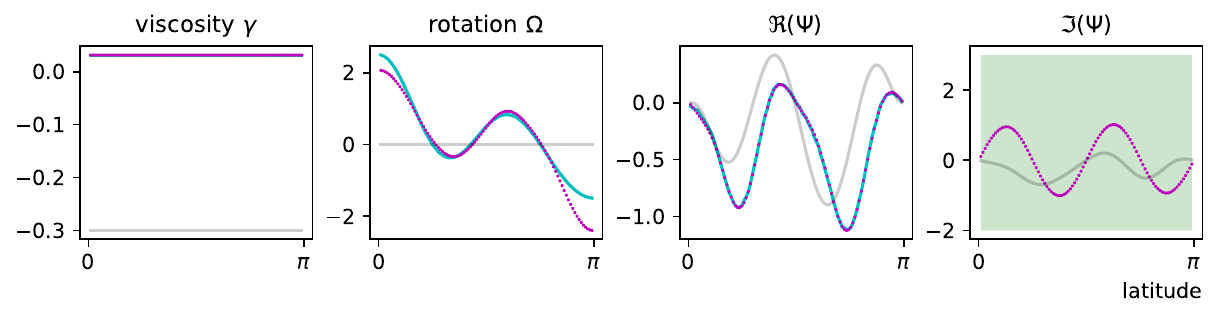}
\caption{1\% data noise. Top to bottom: full and leaked data (filled area) at different levels: 20\%, 50\%, missing imaginary part.}\label{fig-full}
\end{figure}

\begin{figure}[htb!]
\centering
\includegraphics[scale=0.8]{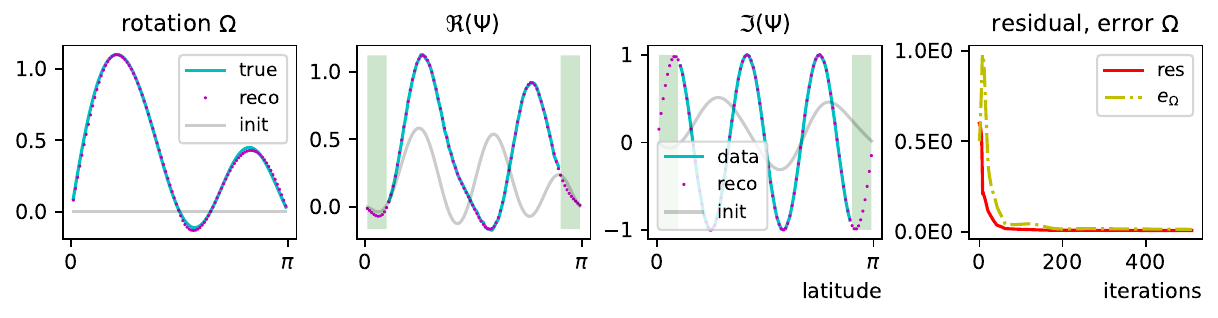}
\\[2ex]
\includegraphics[scale=0.8]{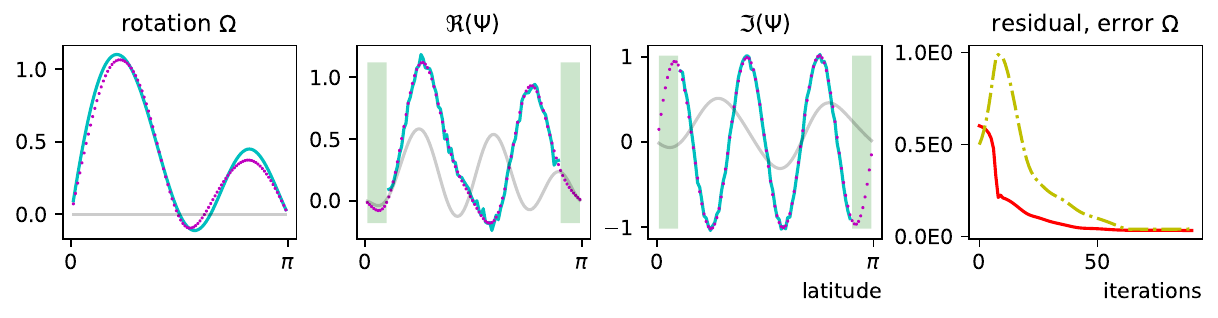}
\\[2ex]
\includegraphics[scale=0.8]{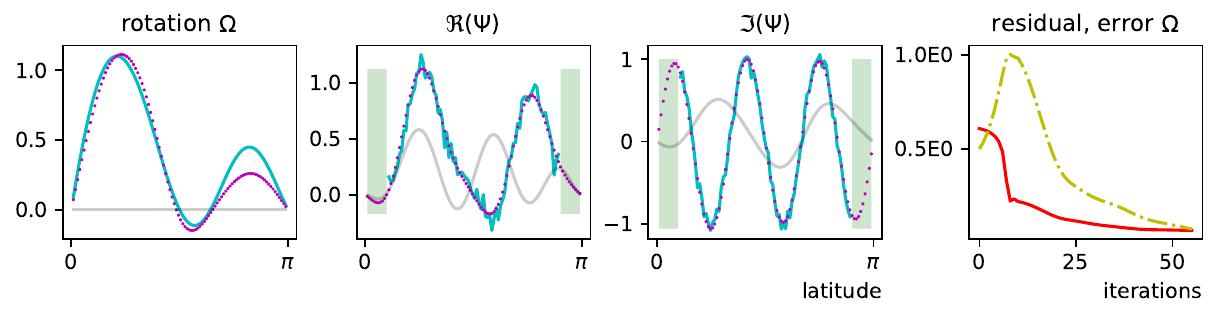}
\\[2ex]
\includegraphics[scale=0.8]{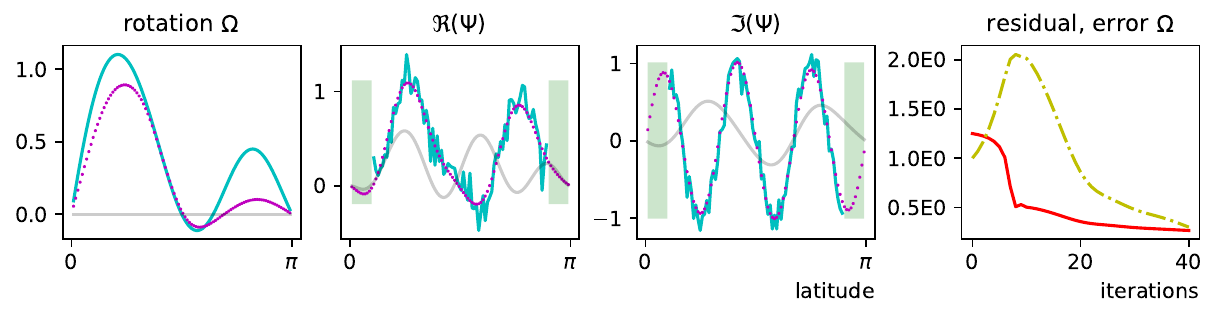}
\caption{Leaked data. Top to bottom: 1\%, 5\%, 10\%, 20\% relative noise level.}
\label{fig-leak-noise}
\end{figure}

\paragraph{Full data.} Figure \ref{fig-full}, row 1 shows the simultaneous recovery of $(\gamma, \Omega)$ from full measurement of the state $\Psi$ at $(\omega, m)=(3,3)$ and with $1\%$ data noise. Despite an initial guess far from the ground truth, the reconstruction closely approximates the exact parameters. The approximated wave field also matches the true state in both real and imaginary parts.

\paragraph*{Leaked data at poles.} Figure \ref{fig-full}, rows 2–3 present the practical case where no data are available near the poles. 
In our numerical experiments, we did not impose additional Cauchy boundary data in Theorem \ref{theo:tcc-restrict}, and simply computed the adjoint as described in Remark \ref{rem:adj-partial}. With 20\% of the data leaked (row 2), the reconstruction quality remains nearly as high as with full data. For 50\% data loss (row 3), the recovery of $\Omega$ moderately deteriorates. Nonetheless, the algorithm exhibits robust performance, consistent with our convergence results in Section \ref{sec:tcc-restricted}.

\paragraph*{Real part measurement.} Figure \ref{fig-full}, row 4 presents the case where only the real part of $\Psi$ can be measured. Compared to row 3 (which also has 50\% data leakage, but full complex data), the absence of the imaginary component further deteriorates the reconstruction.

\paragraph*{Noisy data.} Finally, Figure \ref{fig-leak-noise} reports the results for $(\omega, m)=(1,2)$ with 20 \% data leakage and different noise levels. 
At 1\% noise (row 1), the recovered parameters remain very close to the truth, 
as evidenced by small relative errors ($\||p-p_{\text{true}}\|_{L^2}/\|p_{\text{true}}\|_{L^2}$). 
At 5\% noise (row 2), reconstruction quality is slightly reduced, while at 20\% noise (row 4), the outcome degrades significantly.

\section{Conclusions and Outlook}\label{sec:outlook}

In this paper, we investigated the modeling, as well as the associated forward and inverse problems for inertial waves on the surface of the Sun.  
Formulating the dynamics in terms of a stream function leads to a fourth-order  
elliptic differential equation on the sphere and its separated version for each longitudinal-wavenumber. We rigorously proved 
well-posedness of these equations, including explicit smallness conditions 
for uniqueness, as well as conditions for non-uniqueness via analytic Fredholm theory, both in agreement with empirical findings in helioseismology.
Our studies represent the starting point for further theoretical and 
numerical studies of resonances and inertial modes, as well as the associated inverse problems. 

For the inverse problem of simultaneous identification of differential rotation and viscosity, we established convergence guarantees for the iterative reconstruction process via the tangential cone condition. Furthermore, we established local unique identifiability of rotation profile when viscosity is known, and vice versa. For the case of partial observations, the tangential cone condition required the additional assumption of exact observations of Cauchy boundary data. It is natural to ask whether this assumption can be relaxed, e.g., by including the Cauchy data in the range of the operator. We point out that at least for a finite number of $m$ and $\omega$, uniqueness of rotation at the unobserved latitudes seems unrealistic by dimensionality arguments. 

Future extensions of this work include incorporating realistic stochastic sources of solar excitation \cite{PhilidetGizon23} within a passive imaging framework \hl{\cite{Bjoern23,nguyen-passive}}.
Additionally, we will investigate the nonlinear equation for finite amplitude waves, which is required to interpret the time evolution of linearly unstable inertial modes \cite{Bekki2024}. 
Another promising avenue involves leveraging data-driven model discovery techniques \cite{AarsetHollerNguyen23, nguyen25} to refine existing theoretical inertial wave models by learning hidden laws. 

\appendix
\section{Differential geometry and function spaces on spheres}\label{sec:appendix}
Let $\mathbb{S}^2:=\{\rv\in\mathbb{R}^3:|\rv|=1\}$ denote the unit sphere in $\mathbb{R}^3$ 
and $\Mc = \{r \rv:\rv\in\mathbb{S}^2\} = \{\rv\in\mathbb{R}^3:|\rv|=r\}$ the sphere of 
radius $r>0$. 
In this appendix, we recall some differential operators on $r\mathbb{S}^2$ and their properties as well as function spaces on $\Mc$, which are used throughout this paper.  

\paragraph*{Spherical coordinates.}
We use $
\rv(r,\theta,\phi):= (
r\sin\theta\cos\phi,
r\sin\theta\sin\phi,
r\cos\theta
)
$ as definition of spherical coordinates with radial distance 
$r>0$, polar angle $\theta\in (0,\pi)$ and azimuthal angle $\phi\in (0,2\pi)$. In our notation, we do not distinguish between 
a function $f:\mathbb{R}^3\to \mathbb{R}$ and its counterpart $f\circ\rv:(0,\infty)\times(0,\pi)\times(0,2\pi)\to\mathbb{R}$ in spherical coordinates. Note that the vectors
$\{\base{r}(\rv), \base{\theta}(\rv), \base{\phi}(\rv)\}$ defined by $\base{r}(\rv):=\diffq{\rv}{r}/
|\diffq{\rv}{r}|$ and similarly for $\base{\theta}$ and $\base{\phi}$ form
a local orthonormal basis for all $\rv$. 

\paragraph*{Differential operators on the sphere.}
A vector-valued function $\fv:\Mc\to \mathbb{R}^3$ is called \emph{tangential} if 
$\fv(\rv)\cdot\base{r}(\rv)=0$ for all $\rv$. For a tangential vector field $\fv$ and 
a scalar $f$ on $\Mc$, consider any smooth extensions $\tilde{\fv}$ and $\tilde{f}$ to 
a neighborhood of $\Mc$ in $\mathbb{R}^3$. Then it is possible to define the surface (or 
horizontal) gradient 
$\Grad f$, the surface divergence $\Div \fv$, and the scalar and vectorial surface 
curl $\Curl \fv$ and $\vCurl f$ such that 
\begin{align}\label{eq:SurfaceDiffOp}
\begin{aligned}
&\Grad f  = \big(\grad\tilde{f} - \diffq{\tilde{f}}{r}\base{r}\big)|_{\Mc},
&&\Div\fv = \diver \tilde{\fv}|_{\Mc},\\
&\vCurl f = \curl (\tilde{f}\base{r})|_{\Mc},
&&\Curl \fv = (\curl \tilde{\fv}\cdot\base{r})|_{\Mc}
\end{aligned}
\end{align}
independent of the choices of $\tilde{f}$ and $\tilde{\fv}$ (see \cite[p.~72--73]{nedelec:01}). 
Here $\Div\fv$ and $\Curl\fv$ are scalar functions, while $\Grad f$ and $\vCurl v$ are tangential 
vector fields on $\Mc$. In spherical coordinates, writing 
$\fv= f_{\theta}\base{\theta}+f_{\phi}\base{\phi}$ with $f_\theta:=\fv\cdot\base{\theta}$ 
and $f_{\phi}:=\fv\cdot\base{\phi}$, we have
\begin{align}\label{eq:diffops_on_sphere}
\begin{aligned}
&\Grad f= \frac{1}{r} \diffq{f}{\theta} \base{\theta} + \frac{1}{r\sin\theta}\diffq{f}{\phi}\base{\phi},
&&\Div\fv = \frac{1}{r\sin\theta}\left(\diffq{}{\theta}(f_{\theta}\sin\theta)+\diffq{f_{\phi}}{\phi}\right),\\
&\vCurl f= \frac{1}{r\sin\theta}\diffq{f}{\phi}\base{\theta} -
\frac{1}{r} \diffq{f}{\theta} \base{\phi},
&&\Curl\fv = 
\frac{1}{r\sin\theta}\left(\diffq{}{\theta}(f_{\phi}\sin\theta)-\diffq{f_{\theta}}{\phi}\right).
\end{aligned}
\end{align}
These differential operators satisfy the identities (\cite[Thm.~2.5.19]{nedelec:01}):
\begin{equation}
\begin{split}
&\Curl\Grad = 0,\qquad \Div\vCurl =0,\\
&\int_{\Mc}\Grad f\cdot \fv \D s = -\int_{\Mc} f \Div\fv\D s,\,\,\,
  \int_{\Mc}\vCurl f\cdot \fv \D s = \int_{\Mc} f \Curl\fv\D s.
\end{split}
\end{equation} 
The Laplace-Beltrami operator on $\Mc$ is given by
\begin{align}
\label{eq:LaplaceBeltrami}
\Laph f=\Div\Grad f = -\Curl\vCurl f, 
\end{align}
and the vector Laplacian or Hodge operator acting on tangential vector fields $\fv$ is defined as 
\begin{align}\label{eq:vLapl}
\vLaph\fv = \Grad\Div\fv - \vCurl\Curl\fv.
\end{align}

We also need a decomposition of $L^2_{\mathrm{tan}}(\Mc):=\{\fv\in L^2(\Mc,\mathbb{R}^3:
\fv\cdot \base{r}\equiv 0\}$ into two orthogonal subspaces. 
Since $\Mc$ is simply connected, there exist for each tangential vector field $\fv\in L^2_{\mathrm{tan}}(\Mc)$ two functions 
$\varphi,\psi\in H^1(\Mc)$ such that 
\begin{align}\label{eq:HelmholtzDecomposition}
\fv=\Grad\varphi +\vCurl\psi,
\end{align}
and $\langle\Grad\varphi,\vCurl\psi\rangle_{L^2}=0$ (see \cite[Eq.~(5.6.24)]{nedelec:01}). 

\paragraph*{Function spaces.}
We define Sobolev spaces $H^s(\Mc)$ as Hilbert scale generated 
by the operator $(I-\Delta_{\rm h})^{1/2}$: 
\[
H^s(\Mc):=\mathrm{dom}((I-\Delta_{\rm h})^{s/2}) \qquad \mbox{with}\quad
\|f\|_{H^s}:=\|(I-\Delta_{\rm h})^{s/2}f\|_{L^2}
\] 
for $s\geq 0$. 
It is straightforward that these are Hilbert spaces, and that $H^0(\Mc):=L^2(\Mc)$.  
Moreover, we set $H^{-s}(\Mc):= H^s(\Mc)^*$ with the natural norm. We further introduce 
subspaces of functions with mean $0$ by
\begin{align}\label{eq:mean0space}
L^2_{\diamond}(\Mc) := \{f\in L^2(\Mc):\langle f,1\rangle = 0\},\quad 
\Hd{s}:= \{f\in H^s: \langle f, 1\rangle = 0\} 
\end{align}
for $s\in \mathbb{R}$. These spaces are also complete with the same norms, but we will later 
define a slightly more convenient equivalent norm on $\Hd{s}$. 

The following embeddings hold true (see 
\cite[Propositions 3.2-3.3]{Taylor}, \cite[Theorem 3.5]{Hebey}):
\begin{subequations}\label{eqs:embeddings_sphere}
\begin{align}
\label{eq:embedding_sobo}
& H^s(\Mc)\embed H^k(\Mc)&&\text{compact for  $s>k$}\\
\label{eq:embedding_Lp}
& H^s(\Mc)\embed L^p(\Mc)&&\text{continuous for $s\geq \max(0,1-\tfrac{2}{p})$ and $p\in [1,\infty)$}\\
\label{eq:embedding_Ck}
&H^s(\Mc)\embed C^k(\Mc)&& \text{compact for $s>k+1$  with $k\geq 0$}.
\end{align}
\end{subequations}
\paragraph*{Azimuthal Fourier transform.}
Writing a function $f\in L^2(\Mc)$ in spherical coordinates $f=f(\theta,\phi)$, we can perform 
a Fourier transform in $\phi$ to expand $f$ into a Fourier series
\begin{align}\label{eq:Fourier_phi}
f(\theta,\phi) = \frac{1}{\sqrt{2\pi}}\sum_{m=-\infty}^m \hat{f}_m(\theta)e^{im\phi}.
\end{align}
Let us define $e_m(\phi):=(2\pi)^{-1/2}e^{im\phi}$ and $(\hat{f}_m\otimes e_m)(\theta,\phi):=\hat{f}_m(\theta)e^{im\phi}$. Since $\hat{f}_m\otimes e_m \perp \hat{f}_n\otimes e_n$ in $L^2(\Mc)$ 
and in $L^2_{\diamond}(\Mc)$ 
for $m\neq n$, the expression \eqref{eq:Fourier_phi} induces orthogonal decompositions 
\begin{align}\label{eq:L2_decomp}
L^2(\Mc)=\bigoplus\nolimits_{m=-\infty}^\infty L^2(\Mc;m),\quad
L^2_{\diamond}(\Mc)=\bigoplus\nolimits_{m=-\infty}^\infty L^2_{\diamond}(\Mc;m)
\end{align}
with $\hat{f}_m\otimes e_m \in L^2(\Mc;m)$. As the spherical harmonics $\{Y_{lm}:l\in \mathbb{N}_0,m\in\mathbb{Z},|m|\leq l\}$ form a complete orthonormal basis 
of $L^2(\Mc)$ and $Y_{lm}\in L^2(\Mc;m)$, we have 
\begin{align*}
&L^2(\Mc;m)= L^2_{\diamond}(\Mc;m) = 
\overline{\mathrm{span}\{Y_{lm}:l\geq |m|\}}^{L^2},\qquad m\in\mathbb{Z}\setminus \{0\},\\
&L^2(\Mc;0) = \overline{\mathrm{span}\{Y_{l0}:l\in \mathbb{Z}\}}^{L^2},\quad 
L^2_{\diamond}(\Mc;0) = \overline{\mathrm{span}\{Y_{l0}:l\in \mathbb{Z}\setminus\{0\}\}}^{L^2},
\end{align*}
where $\mathrm{span}$ denotes the set of \emph{finite} linear combinations. 
The Laplace-Beltrami  
operator is diagonal with respect to the decompositions \eqref{eq:L2_decomp}. More precisely, 
it follows from \eqref{eq:SurfaceDiffOp} and \eqref{eq:LaplaceBeltrami} that
\begin{align}\label{eq:Lap_decomp}
&\Laph f = \sum_{m=-\infty}^\infty (\Lapmh \hat{f}_m)\otimes e_m
\quad \mbox{with}\quad 
\Lapmh:=\frac{1}{r^2\sin\theta}\frac{d}{d\theta}\(\sin\theta\frac {d }{d \theta }\)-\frac{m^2}{r^2\sin^2\theta}\,
\end{align}
where by abuse of notation, we denote $\Lapm (\hat{f}_m\otimes e_m):=(\Lapmh\hat{f}_m)\otimes e_m$, thus also
\begin{align}\label{eq:Grad_decomp}
\Gradm:= \(\frac{1}{r} \frac{d}{d\theta}; \frac{im}{r\sin\theta}\).
\end{align}
We obtain $H^s$-orthogonal decompositions 
\begin{align}\label{eq:Hs_decomp}
H^s(\Mc) = \bigoplus\nolimits_{m\in\mathbb{Z}} H^s(\Mc;m)\quad \mbox{and}\quad
\Hd{s} = \bigoplus\nolimits_{m\in\mathbb{Z}} H_{\diamond}^s(\Mc;m)
\end{align}
for any $s\in \mathbb{R}$, where $H^s(\Mc;m)$ and $H_{\diamond}^s(\Mc;m)$ are the closures of 
$\mathrm{span}\{Y_{lm}:l\geq |m|\}$ in $H^s(\Mc)$ and $\Hd{s}$, respectively. 
Again, $H^s(\Mc;m)=H_{\diamond}^s(\Mc;m)$ for $m\in \mathbb{Z}\setminus\{0\}$ 
and  $H^s(\Mc;0)=H_{\diamond}^s(\Mc;0)\oplus \mathrm{span}\{1\}$. 
As $-\Laph Y_{lm}(\cdot/r) = \frac{l(l+1)}{r^2}Y_{lm}(\cdot/r)$, 
and $Y_{00}(\cdot/r)\notin L^2_{\diamond}(\Mc)$, 
it follows that $\langle-\Laph f,f\rangle \geq 2/r^2 \|f\|_{L^2}^2$ for all $f\in L^2_{\diamond}(\Mc)$. Consequently, 
\[
\|f\|_{H_{\diamond}^s}:=\|(-\Laph)^{s/2} f\|_{L^2}, \qquad s\geq 0
\] 
defines a norm on $\Hd{s}$ which is equivalent to the $\|\cdot\|_{H^s}$-norm. 
We will 
consider this as the standard norm on $\Hd{s}$ as it has the convenient properties 
\begin{align}\label{eq:Hdnorms12}
\begin{aligned}
&\|f\|_{H_{\diamond}^2} = \|\Laph f\|_{L^2} \quad\mbox{and}\quad\\
&\|f\|_{H_{\diamond}^1}^2 = \langle -\Laph f,f\rangle 
= \langle -\Div\Grad f, f\rangle = \|\Grad f\|_{L^2}^2.
\end{aligned}
\end{align}
Defining $\Hd{-s}:=\Hd{s}'$ for $s>0$ \hl{and using the fact that the functional calculus is an algebra homomorphism, it follows that} 
\begin{align}\label{eq:Laph_isometric_iso}
\Laph: \Hd{t}\to \Hd{t-2}\quad \text{is an isometric isomorphism for all $t\in\mathbb{R}$.}
\end{align}
Defining $L^p(\Mc;m)$ as the
closure of $\mathrm{span}\{Y_{lm}:l\geq |m|\}$ in $L^p(\Mc)$, the embeddings \eqref{eqs:embeddings_sphere} implies the embeddings in the decomposed spaces
\begin{subequations}\label{eqs:separated_embeddings_sphere}
\begin{align}
& H^s(\Mc;m)\embed H^k(\Mc;m)&&\text{compact for  $s>k$}\\
&H^s(\Mc;m)\embed L^p(\Mc;m)&&\text{continuous for $s\geq \max(0,1-\tfrac{2}{p})$, $p\in [1,\infty)$.}
\end{align}
\end{subequations}

\section{Boundary conditions}\label{appendix:boundary}
The following lemma describes homogeneous boundary conditions satisfied by 
the $\theta$-dependent factors of separated smooth solutions: 
\begin{lemma}\label{lemm:derivatives_at_poles}
Let $m\in\Z$ and $k\in \N_0=\{0,1,2,\dots\}$. A function of the form $u(\theta,\phi) = u_m(\theta)\exp(im\phi)$ in spherical 
coordinates belongs to $C^k(\Mc)$ if and only if the following three conditions are satisfied:
\begin{enumerate}[(i)]
\item\label{it:um_smooth} $u_m\in C^k([0,\pi])$,
\item\label{it:oddeven} $u_m^{(j)}(\theta) =0$ for $\theta\in\{0,\pi\}$ and all $j\in\N_0$ 
with $j\leq k$ and $(-1)^{|m|+j}=-1$,
\item\label{it:firstderivatives} $u_m^{(j)}(\theta)=0$ for $\theta\in\{0,\pi\}$ and all 
$j\in\N_0$ with $j<m$.
\end{enumerate}
\end{lemma}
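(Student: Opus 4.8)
The plan is to localize the problem at the two poles, since on $(0,\pi)\times(0,2\pi)$ the pair $(\theta,\phi)$ furnishes smooth coordinates on $\Mc$ and $e^{im\phi}$ is itself smooth; there $u\in C^k$ is equivalent to $u_m\in C^k$, which is the interior content of \eqref{it:um_smooth}. The remaining content sits at $\theta\in\{0,\pi\}$. By the reflection $\theta\mapsto\pi-\theta$ (under which $\sin\theta$ is invariant and $\cos\theta$ merely changes sign, so that $\tilde u_m(\theta'):=u_m(\pi-\theta')$ satisfies the north-pole version of \eqref{it:oddeven}--\eqref{it:firstderivatives} iff $u_m$ satisfies them at $\pi$, the factor $(-1)^j$ being irrelevant to vanishing), it suffices to treat the north pole $\theta=0$. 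For $m<0$ one passes to the complex conjugate $\overline{u}=\overline{u_m}\,e^{-im\phi}$, reducing to a nonnegative azimuthal index; accordingly I work with $|m|$ throughout, reading the ``$j<m$'' in \eqref{it:firstderivatives} as $j<|m|$.

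Near the north pole I would use the smooth chart $\xi:=\sin\theta\cos\phi$, $\eta:=\sin\theta\sin\phi$ (the restrictions of $x/r,y/r$). The decisive observations are that $\rho^2:=\xi^2+\eta^2=\sin^2\theta$ and $\cos\theta=\sqrt{1-\rho^2}$ are smooth functions of $(\xi,\eta)$ there, that $(\xi+i\eta)^{|m|}=(\sin\theta)^{|m|}e^{im\phi}$ is a smooth homogeneous polynomial, but that $\rho=\sqrt{\xi^2+\eta^2}$ is only Lipschitz at the origin. Writing $u=\big(u_m(\theta)/(\sin\theta)^{|m|}\big)(\xi+i\eta)^{|m|}$ concentrates all difficulty in the radial factor. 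The analytic core is a finite-order Whitney-type lemma: $(\xi,\eta)\mapsto\psi(\rho)$ is $C^k$ at the origin iff $\psi$ extends to an even function in $C^k((-\epsilon,\epsilon))$, equivalently $\psi\in C^k([0,\epsilon))$ with $\psi^{(j)}(0)=0$ for all odd $j\le k$. I would prove this by Taylor expansion combined with induction on $k$, and then establish its mode-$|m|$ refinement: expanding a $C^k$ function $U$ into its degree-$k$ Taylor polynomial $\sum_{a+b\le k}c_{ab}\xi^a\eta^b$ and passing to polar form $\xi^a\eta^b=\rho^{a+b}(\cos\phi)^a(\sin\phi)^b$, only monomials with $a+b\ge|m|$ and $a+b\equiv|m|\pmod 2$ contribute an $e^{im\phi}$-harmonic; hence the $m$-th Fourier coefficient has the form $\rho^{|m|}\times(\text{even function of }\rho)$ up to $o(\rho^k)$. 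Since for our $u$ this coefficient equals exactly $u_m(\arcsin\rho)$, one obtains the factorization $u_m(\theta)=(\sin\theta)^{|m|}E(\sin\theta)$ with $E$ even in $\rho=\sin\theta$ and of the appropriate finite smoothness.

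It then remains to translate this factorized form into the stated derivative conditions. Because $\theta\mapsto\sin\theta$ is odd and a local diffeomorphism near $0$, extending $u_m$ to $(-\epsilon,\epsilon)$ yields $u_m(-\theta)=(-1)^{|m|}u_m(\theta)$, so $u_m$ has parity $(-1)^{|m|}$; every derivative of opposite parity vanishes at $0$, which is precisely \eqref{it:oddeven}, while the leading factor $(\sin\theta)^{|m|}$ forces vanishing to order $|m|$, which is \eqref{it:firstderivatives}. For the converse, given \eqref{it:um_smooth}--\eqref{it:firstderivatives} I would reconstruct $E$ from $u_m/(\sin\theta)^{|m|}$, verify using the parity and vanishing hypotheses that it extends evenly with the required regularity (conveniently, an even function of $\sin\theta$ may be written as a function of $\cos\theta=\sqrt{1-\rho^2}$, which is manifestly smooth in the chart), and then read off $u=E(\rho)(\xi+i\eta)^{|m|}\in C^k$ from the core lemma together with the smoothness of the polynomial factor and of $\rho^2,\cos\theta$.

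The main obstacle I anticipate is the finite-differentiability bookkeeping in the core lemma and its mode-$|m|$ refinement: controlling exactly how $C^k$-regularity in the Cartesian chart descends to the precise list of vanishing $\theta$-derivatives, given that $\rho$ fails to be differentiable at the origin, so that each division by $(\sin\theta)^{|m|}$ and each passage between ``even in $\rho$'' and ``parity in $\theta$'' must be justified at the sharp order $k$ rather than merely at the $C^\infty$ level, where the statement is transparent.
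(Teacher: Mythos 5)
Your proposal is essentially the paper's own argument: both localize at the poles in the tangent-plane chart, characterize $C^k$ there via a degree-$k$ Taylor polynomial, pass to polar coordinates, and match the $e^{im\phi}$ Fourier mode to conclude that only the powers $\rho^{|m|+2l}$ survive, which yields conditions \eqref{it:oddeven}--\eqref{it:firstderivatives}. Your factorization through $(\xi+i\eta)^{|m|}$ and the finite-order Whitney-type lemma for the radial remainder is a repackaging of that same computation rather than a different route (and your reading of condition \eqref{it:firstderivatives} with $|m|$ in place of $m$ matches what the paper's proof actually establishes).
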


\begin{proof}
It is clear that $u_m\in C^k([0,\pi])$ is equivalent to 
$u\in C^k(\Mc\setminus\{P_+,P_-\})$ with the poles $P_{\pm}:=(0,0,\pm 1\}$. 
Moreover, as $u_m(\theta) = u(\theta,0)$, we have $u_m\in C^k([0,\pi])$ if $u\in C^k(\Mc)$.

To study regularity at the pole  $P_+$, 
consider the smooth map $M:D:=\{(x,y)\in\R^2:x^2+y^2\leq r^2/2\}\to \Mc$ of $\Mc$ 
defined by $M(x,y):= \sqrt{r^2 - x^2-y^2}$. By definition, 
$u$ is $k$-times differentiable at $P_+$ if and only if $u\circ M$ is $k$ times differentiable at $0$. And this is the case if and only if there exists a polynomial 
$p(x,y) = \sum_{j=0}^{k}\sum_{n=0}^j c_{j,n} x^{n} y^{j-n}$ of degree $\leq k$ such that 
\begin{align}\label{eq:polynomial_approximation}
\left|(u\circ M - p)(x,y)\right| = o\left(\sqrt{x^2+y^2}^{k}\right)\qquad \text{as }(x,y)\to 0.
\end{align} 
Writing $x=\frac{\rho}{2}(e^{i\phi}+e^{-i\phi})$ and $y=\frac{i\rho}{2}(e^{i\phi}-e^{-i\phi})$ 
in polar coordinates and using binomial expansions, we obtain
\[
\sum_{n=0}^j c_{j,n} x^{n} y^{j-n} = \rho^j \sum_{l=0}^j d_{j,l} e^{i(j-2l)\phi} 
\]
for some coefficients $d_{j,l}\in \C$. As
$(u\circ M)(\rho \cos\phi,\rho\sin\phi) = u_m(\rho)\exp(m\phi)$, 
the coefficients $d_{j,l}$ must vanish unless $j-2l=m$, so
\[
u_m(\rho) = \sum_{l\geq 0, m+2l\leq k} d_{m+2l,l}\rho^{m+2l} + o(\rho^k)\qquad \text{as }
\rho\to 0. 
\]
This shows that $u$ is $k$-times differentiable at $P_+$ if and only if 
$u_m$ satisfies 
the conditions \ref{it:oddeven} and \ref{it:firstderivatives} at $\theta=0$. 
Writing derivatives of $u$ of order $\leq k$ in terms of derivatives of $u_m$, we see these derivatives are continuous if $u_m\in C^k([0,\pi])$. 
The argument for the pole $P_-$ is analogous, with replacing the map $M$ by $-M$. 
\end{proof}

With the help of the following lemma, it can be shown that classical solutions 
to \eqref{eq:separated} are also weak solutions: 

\begin{lemma}\label{lem:symmetry}
For all $m\in\Z$ the operator $\Lapm^2$ is symmetric on 
the set 
\[
\left\{u\in C^4([0,\pi]):\Gamma_m u = 0, \Delta_m^2u \in L^2([0,\pi],r^2\sin)\right\}\subset L^2([0,\pi],r^2\sin).
\] 
\end{lemma}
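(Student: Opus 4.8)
The plan is to show $\langle \Delta_m^2 u, v\rangle = \langle u, \Delta_m^2 v\rangle$ for $u,v$ in the indicated set, where $\langle f,g\rangle := \int_0^\pi f\,\bar g\,r^2\sin\theta\,d\theta$ is the inner product of $L^2([0,\pi],r^2\sin)$, by reducing everything to a Lagrange (Green) identity for the single second-order operator $\Delta_m$ applied twice. The key structural observation is that $\Delta_m$ is a Sturm--Liouville operator for exactly this weight: writing $\Delta_m f = \frac{1}{r^2\sin\theta}(\sin\theta\,f')' - \frac{m^2}{r^2\sin^2\theta}f$, the weight cancels against the prefactor, so the potential term $-\frac{m^2}{\sin^2\theta}f$ is manifestly symmetric and produces no boundary contribution.

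First I would establish, for $C^2$ functions on a closed subinterval $[\epsilon,\pi-\epsilon]\subset(0,\pi)$ where all coefficients are smooth, the identity
\begin{equation*}
\int_\epsilon^{\pi-\epsilon}\big(\Delta_m f\,\bar g - f\,\overline{\Delta_m g}\big)\,r^2\sin\theta\,d\theta
= \Big[\sin\theta\big(f'\bar g - f\bar g'\big)\Big]_\epsilon^{\pi-\epsilon},
\end{equation*}
obtained by integrating $\int(\sin\theta\,f')'\bar g$ by parts twice. Applying this once with $(f,g)=(\Delta_m u,v)$ and once with $(f,g)=(u,\Delta_m v)$ and adding, the intermediate term $\langle\Delta_m u,\Delta_m v\rangle$ cancels and I obtain on $[\epsilon,\pi-\epsilon]$
\begin{equation*}
\int_\epsilon^{\pi-\epsilon}\big(\Delta_m^2 u\,\bar v - u\,\overline{\Delta_m^2 v}\big)\,r^2\sin\theta\,d\theta
= \Big[\sin\theta\big((\Delta_m u)'\bar v - (\Delta_m u)\bar v' + u'\overline{\Delta_m v} - u\,\overline{(\Delta_m v)'}\big)\Big]_\epsilon^{\pi-\epsilon}.
\end{equation*}
The proof is then closed by letting $\epsilon\to0^+$. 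The left-hand side converges to $\langle\Delta_m^2 u,v\rangle - \langle u,\Delta_m^2 v\rangle$, since $\Delta_m^2 u,\Delta_m^2 v\in L^2([0,\pi],r^2\sin)$ by hypothesis and $u,v$ are bounded, so the integrands are dominated in $L^1(r^2\sin)$; it remains to show the boundary terms vanish.

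The main obstacle, and the only place the boundary data enters, is controlling these boundary terms at the poles $\theta\in\{0,\pi\}$, where $\cot\theta$ and $1/\sin^2\theta$ are singular. Every boundary term carries a factor $\sin\theta\to0$, so it suffices to prove that $\Delta_m u$ and $(\Delta_m u)'$ (and likewise for $v$) remain bounded as $\theta\to0,\pi$, while $u,u'$ are bounded because $u\in C^4([0,\pi])$. To verify this I would Taylor-expand $u$ at each pole and track the potentially singular $\theta^{-2}$ and $\theta^{-1}$ contributions in $\Delta_m u = \frac{1}{r^2}\big(u'' + \cot\theta\,u' - \frac{m^2}{\sin^2\theta}u\big)$: using the vanishing of the relevant low-order Taylor coefficients encoded in $\Gamma_m u=0$ (cf.\ Lemma~\ref{lemm:derivatives_at_poles}), these singular contributions cancel, leaving an expansion in nonnegative powers of $\theta$ with finite value and derivative at the pole. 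The hypothesis $\Delta_m^2 u\in L^2([0,\pi],r^2\sin)$ plays a dual role here: it makes the inner products on the left finite, and for the higher wavenumbers $|m|\geq 2$ it forces the additional pole-vanishing of $\Delta_m u$ required for $\Delta_m^2 u$ to be square-integrable against the weight, consistent with the cancellations above.

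I expect the Taylor-expansion bookkeeping at the poles to be the genuinely technical step; once boundedness of $\Delta_m u$ and $(\Delta_m u)'$ at $\{0,\pi\}$ is established, vanishing of each boundary term is immediate from the $\sin\theta$ prefactor, and the limit $\epsilon\to0$ finishes the argument. A cleaner but less self-contained alternative would be to invoke self-adjointness of the Laplace--Beltrami operator on $\Mc$ together with the isometry \eqref{eq:Laph_isometric_iso} and restrict to the $m$-th Fourier sector, but the direct Lagrange-identity computation keeps the boundary operator $\Gamma_m$ explicit.
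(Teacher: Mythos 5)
Your proof is correct and belongs to the same family as the paper's: a Green/Lagrange identity for $\Delta_m$ followed by an analysis of the boundary terms at the poles using $\Gamma_m u=0$. The organization differs in two respects worth noting. First, the paper stops after two integrations by parts at the manifestly Hermitian form $\langle\Delta_m u,\Delta_m v\rangle$ plus the bracket $\left[\sin\theta\,\partial_\theta(\Delta_m u)\bar v-\sin\theta\,(\Delta_m u)\partial_\theta\bar v\right]$, expands that bracket into eight elementary products of derivatives of $u$ and $v$, and kills them case by case ($|m|\ge 2$, $|m|=1$, $m=0$) via pairwise cancellations and l'Hospital -- several of those elementary terms are individually singular or nonzero and only cancel in pairs. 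You instead integrate by parts all the way to $\langle u,\Delta_m^2 v\rangle$ and reduce the pole analysis to a single clean claim: $\Delta_m u$ and $(\Delta_m u)'$ stay bounded at $\theta\in\{0,\pi\}$, after which the $\sin\theta$ prefactor finishes the job. That claim is true and is exactly where the content lies: the $\theta^{-1}$ and $\theta^{-2}$ singularities in $\cot\theta\,u'$, $u'/\sin^2\theta$, $m^2u/\sin^2\theta$ and $2\cos\theta\,u/\sin^3\theta$ cancel precisely because of the low-order Taylor coefficients annihilated by $\Gamma_m u=0$ (I checked this for all three cases); so your sketch closes, though the Taylor bookkeeping you defer is the same cancellation the paper carries out explicitly. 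One small inaccuracy: the hypothesis $\Delta_m^2u\in L^2([0,\pi],r^2\sin)$ does not ``force additional pole-vanishing of $\Delta_m u$'' for $|m|\ge 2$, nor is it needed for that purpose -- boundedness of $\Delta_m u$ and $(\Delta_m u)'$ already follows from $u\in C^4$ and $u(p)=u'(p)=0$; the $L^2$ hypothesis only ensures the inner products are finite.
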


\begin{proof}
Recalling the separated Laplacian in \eqref{eq:Lap_decomp} 
 and the weighted $L^2_{\diamond}$-inner product  allow us to perform the integration by part as
\begin{align*}
\lan \Lapm^2 u, v\ran&= -\lan \Gradm \Lapm u,\Gradm v \ran + \left[\sin\theta\,\dt (\Lapm u)\bar{v} \right]_{\theta\to 0}^{\theta\to \pi}\\
&=\lan\Lapm u, \Lapm v\ran + \left[\sin\theta\,\dt (\Lapm u)\bar{v} - \sin\theta\, \Lapm u\,\dt \bar{v}\right]_{\theta\to 0}^{\theta\to \pi}\\
&= \lan\Lapm u, \Lapm v\ran 
+ \frac{1}{r^2}\sum\nolimits_{j=1}^5 [a_j(\theta)]_{\theta\to 0}^{\theta\to \pi}
+\frac{m^2}{r^2} \sum\nolimits_{j=1}^3 [b_j(\theta)]_{\theta\to 0}^{\theta\to \pi}
\end{align*}
with
\begin{align*}
&a_1 = -\cos\theta\dt u \dt \bar{v},
&&a_2 = -\sin\theta\dt^2u \dt \bar{v},
&&a_3 = - \frac{1}{\sin\theta}\dt u \bar{v} ,
&&a_4 = \cos\theta\dt^2u \bar{v},\\
&a_5 = \sin\theta \dt^3u\bar{v},
&&b_1 = \frac{-1}{\sin\theta}\dt u \bar{v},
&&b_2 = \frac{2\cos\theta}{\sin^2\theta}u\bar{v},
&&b_3= \frac{1}{\sin\theta}u \dt\bar{v}
\end{align*}
for $u,v\in\Hd{s}$. Denoting the poles by $p\in\{0,\pi\}$ and considering $v$ having the same boundary conditions as $u$, we can see as follows that the boundary terms 
vanish for all $m\in\mathbb{Z}$:
\begin{description}
\item[$|m|\geq2$:] Here $u(p)=u'(p)=0=v(p)=v'(p)$ such that the limits 
for all the $a_j$-terms vanish. Moreover, we observe from l'Hospital's rule 
that  $\lim_{\theta\to p} b_2(\theta)= 2u'(p)\bar{v}'(p)=0$ and  
$\lim_{\theta\to p}b_j(\theta)=0$ for $j\in\{1,3\}$ such that all $b_j$-limits vanish. 
\item[$|m|=1$:] Here $u(p)=u''(p)=0=v(p)=v''(p)$, and hence 
$\lim_{\theta\to p}a_j(p)=0$ for $j\in \{2,4,5\}$.  Moreover, 
$\lim_{\theta\to p}(a_1+a_3)(\theta)= -2\cos(p)u'(p)\bar{v}'(p) = -
\lim_{\theta\to p}b_2(\theta)$, and 
$\lim_{\theta\to p} (b_1+b_3)(\theta)=0$ by l'Hospital.
\item[$m=0$:] Here $u'(p)=u'''(p)=0=v'(p)=v'''(p)$, and we only need to discuss
the $a_j$-terms, which vanish in the limit $\theta\to p$ for $j\in\{1,2,5\}$. 
Moreover, by l'Hospital 
$\lim_{\theta\to p} a_3(\theta)= -\cos(p)u''(p)\bar{v}(p)=- \lim_{\theta\to p}a_4(p)$.
\qedhere
\end{description}
\end{proof}

\begin{remark}[Restricted domain -- Cauchy boundary]\label{rem:symmetry-restricted}
It is clear that on the restricted domain $I'$ as in Section 
\ref{sec:wellposed-restricted} with boundary points $p\in \partial I'=\{\epsilon, \pi-\epsilon\}$, the symmetry property remains valid with Cauchy boundary conditions $u(p)=u'(p)=0$.
\end{remark}

\hl{
\section{On the assumptions of the uniqueness result}\label{appendix:baire}
In this appendix, we show 
that the conditions \eqref{nullset} 
of the uniqueness result in Corollary \ref{coro:uniqueness} are 
satisfied generically. That is, for a given $\Omega$, the set of 
right-hand sides $f$ for which \eqref{nullset} is violated is meagre
in $L^2_{\diamond}(\Mc;m)$, i.e., of first Baire category. Recall that by definition, this means that the set is a countable union of nowhere dense sets. By Baire's theorem, its complement is thus dense.

\begin{lemma}\label{lem:A_meagre}
Let $\emptyset\neq I'\subset (0,\pi)$ be open. Then 
$A:=\{\Phi:\lambda_{I'}(\{\Re\Phi=0\})>0\}$
is meagre in $\Hdm{2}$. Here, $\lambda_{I'}$ denotes the Lebesgue measure on $I'$, or, equivalently, the canonically weighted Lebesgue measure, while $\{\Re\Phi=0\}$ is a short-hand for $\{\theta\in I':\Re\Phi(\theta)=0\}$.
\end{lemma}

\begin{proof}
Given $n\in\mathbb{N}$, define $A_n:=\{\Phi:\lambda_{I'}(\{\Re\Phi=0\})>1/n\}$. We claim that $A_n$ is closed. For consider $\Phi\in A_n$ and a sequence $(\Phi_k)$ in $A_n$ such 
that $\lim_{k\to\infty}\|\Phi_k-\Phi\|_{H^2}=0$.  
Then by the definition of $A_n$, the sets $E_k:=\{\theta\in I':\Re\Phi_k(\theta)=0\}$ satisfy 
$\lambda_{I'}(E_k)\geq 1/n$. Set $E:=\limsup_{k\to\infty} E_k:=\bigcap_{k\geq 1}F_k$, with $F_k:=\bigcup_{l\leq k} E_l$. 
Since $\lambda_{I'}$ is finite and $F_1\supset F_2\supset \dots$, we have 
$\lambda_{I'}(E) = \lim_{k\to\infty} \lambda_{I'}(F_k) \geq \limsup_{k\to\infty} \lambda_{I'}(E_k)\geq 1/n$. If $\theta\in E$, then 
$\theta\in E_k$ for all but finitely many $k$, and since $\Phi_k$ converges pointwise to $\Phi$, 
we can conclude that $\Re\Phi(\theta)=0$. Hence $\Phi\in A_n$, so $A_n$ is closed. 

Next we show that $A_n$ does not contain any non-empty open set: If  
some ball around a function $\Psi\in A_n$ was a subset of $A_n$, then $A_n$ would contain 
an infinite number of the functions $\Psi-\frac{1}{k}$. This implies that infinitely many 
of the disjoint sets $\{\Re\Psi=1/k\}$ have measure $\geq 1/n$, contradicting the finiteness of $\lambda_{I'}$. 

As $A=\bigcup_{n=1}^{\infty}A_n$, this completes the proof.
\end{proof}

The following lemma shows meagreness of the set of sources for which 
the first condition in \eqref{nullset} is violated.
\begin{lemma}
The set $\{f\in L^2_{\diamond}(\Mc;m):\lambda_{I'}(\{\Delta_m\mathbf{B}_m^{-1}f=0\}>0\}$ is meagre in $L^2_{\diamond}(\Mc;m)$. 
\end{lemma}

\begin{proof}
The set $\{\Phi:\lambda_{I'}(\{\Phi=0\})>0\}$ is meagre in $\Hdm{2}$ 
as a subset of the set $A$ in Lemma \ref{lem:A_meagre}. 
Hence the statement follows from the fact that $\Delta_m\mathbf{B}_m^{-1}:L^2_{\diamond}(\Mc;m)\to \Hdm{2}$ is a homeomorphism.
\end{proof}

Finally, we show meagreness of the set of sources violating the 
second condition in \eqref{nullset}. 

\begin{lemma}
Assume that $\inf_{I'}|\Psi_0|>0$ and 
$\inf_{I'}|\Delta\Psi_0|>0$ for some interval 
$I'\subsetneq (0,\pi)$ and 
$\Psi_0:={\mathbf B}_m^{-1}f_0$ . Then there exists 
a ball $B_{f_0}$ in $L^2_{\diamond}(\Mc;m)$ centered at $f_0$ such that the following set is meagre in  $L^2_{\diamond}(\Mc;m)$:
\begin{align}\label{eq:phase_set}
\{f\in B_{f_0}:\lambda_{I'}(\{\operatorname{Arg}(\mathbf{B}_m^{-1}f)-\operatorname{Arg}(\Delta_m\mathbf{B}_m^{-1}f)\in\{-\pi,0,\pi\}\})>0\}.
\end{align}
\end{lemma}

\begin{proof}
Since $\Hdm{4}$ is continuously embedded in $L^{\infty}(\Mc;m)$, there 
exists a ball $B_{f_0}$ such that $\inf_{f\in B_{f_0}}\inf_{I'} |\mathbf{B}_m^{-1}f|>0$. Note that 
\[
\operatorname{Arg}\Psi - \operatorname{Arg}\Delta_m\Psi \in \{-\pi,0,\pi\}\notin\{-\pi,0,\pi\} 
\quad \Leftrightarrow\quad 
\frac{\Delta_m\Psi}{\Psi}\notin \mathbb{R}.
\]
For simplicity, first consider the case $I'=(0,\pi)$, noting that in view of the boundary conditions, this case can only occur for $m=0$. Then we define the mapping $G:D(G)\subset \Hdm{4}\to\Hdm{2}$ by $G(\Psi):=\Delta_m\Psi/\Psi$ on $D(G):=\{\Psi:\inf|\Psi|>0\}$. We claim
that 
\[
G'[\Psi_0]:\Hdm{4}\to \Hdm{2},\qquad 
G'[\Psi_0]\delta\Psi = \frac{1}{\Psi_0}(\Delta_m -G(\Psi_0)) \delta\Psi
\]
is boundedly invertible if $G(\Psi_0)$ is not 
an eigenvector of $\Delta_m$. In fact, since $\Hdm{2}$ is 
an algebra homomorphism and  $\Psi_0$ and $1/\Psi_0$ are bounded, 
the operator $\Phi \mapsto \Phi/\Psi_0$ is bounded and boundedly 
invertible in $\Hdm{2}$. Under our assumption, 
$\Delta_m-G(\Psi_0):\Hdm{4}\to \Hdm{2}$ is also bounded and 
boundedly invertible by Riesz theory. 
Hence, by the inverse function theorem, $G$ has a continuous 
inverse in a neighborhood of $\Psi_0$. Therefore, after possibly shrinking 
the ball $B_{f_0}$, $G\circ \mathbf{B}_m^{-1}$ restricted 
to $B_{f_0}$ is a homeomorphism. Since 
$A':=A\cap (G\circ \mathbf{B}_m^{-1})(B_{f_0})$ is meagre by 
Lemma \ref{lem:A_meagre}, so is $\mathbf{B}_m G^{-1}(A')$, which 
coincides with the set in \eqref{eq:phase_set}.

In the general case, we can choose Nemytskii operators 
$N_j(\Psi)(\theta):=n_j(\Psi(\theta),\theta)$, $j=1,2$ with 
smooth functions $n_j:D(n_j)\subset \mathbb{C}\times (0,\pi)\to \mathbb{C}$ 
satisfying $n_j(\Psi(\theta),\theta)=\Psi(\theta)$ for $\theta\in I'$ 
and $\Psi\in \mathbf{B}_m^{-1}(B_{f_0})$ and define 
$G(\Psi):=\frac{N_1(\Delta_m\Psi)}{N_2(\Psi)}$. Then 
$(N_j'[\Psi_0]\delta\Psi)(\theta) = \frac{\partial n_j}{\partial z}(\Psi_0(\theta),\theta) \,\delta \Psi(\theta)$ and 
\[
G'[\Psi_0]\delta\Psi = \frac{N_1'[\Psi_0]}{N_2(\Psi_0)}
\left(\Delta_m-G(\Psi_0)\frac{N_2'[\Psi_0]}{N_1'[\Psi_0]}\right)
\delta\Psi
\]
where we identify the multiplication operators $N_j'[\Psi_0]$ 
with their multiplier functions. Due to the assumptions 
on $\Psi_0$ and $\Delta\Psi_0$, after possibly shrinking 
$B_{f_0}$, we can choose $n_j$ such that 
$N_j(\Psi_0)$ and $N_j'[\Psi_0]$ are bounded and boundedly invertible 
in $\Hdm{2}$ and that $G(\Psi_0)\frac{N_2'[\Psi_0]}{N_1'[\Psi_0]}$ 
is not an eigenfunction of $\Delta_m$. We can then repeat the argument for the first case. 
\end{proof}


}

\printbibliography
\end{document}